\newtheorem*{rep@theorem}{\rep@title}
\newcommand{\newreptheorem}[2]{%
\newenvironment{rep#1}[1]{%
 \def\rep@title{#2 \ref{##1}}%
 \begin{rep@theorem}}%
 {\end{rep@theorem}}}
\newtheorem*{rep@prop}{\rep@title}
\newcommand{\newrepprop}[2]{%
\newenvironment{rep#1}[1]{%
 \def\rep@title{#2 \ref{##1}}%
 \begin{rep@prop}}%
 {\end{rep@prop}}}
\theoremstyle{plain}
        \newtheorem{theorem}{Theorem}[section]
        \newtheorem*{theorem*}{Theorem}
        \newtheorem*{conj*}{Conjecture}
        \newtheorem{lemma}[theorem]{Lemma}
        \newtheorem{cor}[theorem]{Corollary}
        \newtheorem{prop}[theorem]{Proposition}
\theoremstyle{definition}
        \newtheorem{definition}[theorem]{Definition}
\theoremstyle{remark}
        \newtheorem{remark}[theorem]{Remark}
        \newtheorem*{remark*}{Remark}
        \newtheorem*{claim}{Claim}
        \newtheorem*{case1}{Case 1}
        \newtheorem*{case2}{Case 2}
        \newtheorem*{case3}{Case 3}
        \newtheorem*{case4}{Case 4}
        \newtheorem{example}[theorem]{Example}
\numberwithin{equation}{section}
\providecommand{\defn}[1]{\emph{#1}}
\newcounter{mylistnum}
\newcommand{\dist}{\operatorname{dist}}
\newcommand{\clos}  {\operatorname{clos}}
\newcommand{\inte}  {\operatorname{int}}
\newcommand{\id} {\operatorname{id}}
\newcommand{\R}{\mathbb{R}}      % reelle Zahlen
\newcommand{\C}{\mathbb{C}}      % komplexe Zahlen
\newcommand{\N}{\mathbb{N}}      % natuerliche Zahlen
\newcommand{\Z}{\mathbb{Z}}      % ganze Zahlen
\newcommand{\Q}{\mathbb{Q}}      % ganze Zahlen
\newcommand{\Ho}{\mathbb{H}^+}   % obere Halbebene
\newcommand{\CDach}{\widehat{\mathbb{C}}}% Riemannsche Zahlenkugel
\newcommand{\RDach}{\widehat{\mathbb{R}}}% aequator in CDach
\newcommand{\D}{\mathbb{D}}      % Einheitskreis
\newcommand{\Dbar}{\overline{\mathbb{D}}}      % Einheitskreis
\providecommand{\abs}[1]{\lvert#1\rvert}
\newcommand{\post}{\operatorname{post}}
\newcommand{\CC}{\mathcal{C}}
\newcommand{\EC}{\mathcal{E}}
\newcommand{\E} {\mathbf{E}}
\newcommand{\V} {\mathbf{V}}
\newcommand{\W} {\mathbf{W}}
\newcommand{\A} {\mathbf{A}}
\newcommand{\J}{\mathcal{J}}
\newcommand{\K}{\mathcal{K}}
\newcommand{\wt}{{\tt w}}
\newcommand{\bt}{{\tt b}}
\def\mate{\hskip 1pt \bot \hskip -5pt \bot \hskip 1pt}
\begin{document}

\title{Unmating of rational maps,\\ sufficient criteria and examples}

\date{\today} 

\author{Daniel Meyer}

\address{Daniel Meyer\\
  Jacobs University \\
  School of Engineering and Science\\
  Campus Ring~1\\
  28759 Bremen\\
  Germany 
}

\begin{abstract}
  Douady and Hubbard introduced the operation of mating of
  polynomials. This identifies two filled Julia sets and the dynamics
  on them via external
  rays. In many cases one obtains a rational map. Here the opposite
  question is tackled. Namely we ask when a given (postcritically
  finite) rational map $f$ arises as a mating. A sufficient condition
  when this is possible is given. If this condition is satisfied, we
  present a simple explicit algorithm to unmate the rational map. This
  means we decompose $f$ into polynomials, that when mated yield
  $f$. Several examples of unmatings are presented.
\end{abstract}

\maketitle

\section{Introduction}
\label{sec:introduction}

Douady and Hubbard observed that often ``one can find'' Julia sets of
polynomials within Julia sets of rational maps. This prompted them to
introduce the operation of \emph{mating of polynomials}, see
\cite{MR728980}.  
It is a way to glue together two (connected and
locally connected) filled Julia sets along their boundaries. The
dynamics then descends to the quotient. Somewhat surprisingly one
often obtains a map that is topologically conjugate to a rational
map. 

Thurston's celebrated theorem on the classification of rational maps
among (postcritically finite) topological rational maps (see
\cite{DouHubThurs}) was largely inspired to give a complete answer
when the mating of two polynomials results in (i.e., is topologically
conjugate to) a rational map. 

\smallskip
Here the opposite question is considered. Namely we ask whether a
given rational map $f$ is a mating. This means to decide whether $f$
arises as (i.e., is topologically conjugate to) a mating of
polynomials $P_\wt, P_\bt$. 
If the answer is yes, one wishes to \defn{unmate} the map $f$, i.e.,
obtain the polynomials $P_\wt, P_\bt$. 

The author has recently shown that in the case when $f$ is
postcritically finite and the Julia set of
$f$ is the whole Riemann sphere, that every sufficiently high iterate
indeed arises as a mating, see \cite{inv_Peano} and
\cite{exp_quotients}. Indeed the result remains true for
\emph{expanding Thurston maps}. 

\smallskip
The purpose of this paper is twofold. The first is to explain the
methods used in \cite{inv_Peano} and \cite{exp_quotients}. Namely we
present a sufficient condition that $f$ arises as a mating of
polynomials $P_\wt, P_\bt$. A simple algorithm to unmate $f$ is
presented, i.e., to obtain $P_\wt, P_\bt$ in a simple combinatorial
fashion. The presentation is largely expository, we do not attempt to
provide full proofs (which can be found in \cite{inv_Peano} and
\cite{exp_quotients} and are somewhat technical). 

The second purpose is to contrast the two opposite cases of rational
(postcritically finite) rational maps. Namely we contrast hyperbolic
maps, where each critical point is contained in the Fatou set, with
maps where each critical point is contained in the Julia set
(equivalently the Julia set is the whole Riemann sphere $\CDach$). 

The following contrasting theorems are obtained.
\begin{reptheorem}{thm:hyp_post3}
  Let $f\colon \CDach \to \CDach$ be a hyperbolic, postcritically
  finite, rational map, with $\#\post(f)=3$, that is not a
  polynomial. Then $f$ does not arise as a mating.
\end{reptheorem}

\begin{reptheorem}{thm:P3mate}
  Let $f\colon \CDach \to \CDach$ be a postcritically finite rational
  map, with Julia set $\J(f)=\CDach$ and $\#\post(f)=3$. Then $f$ or
  $f^2$ arises as a mating. 
\end{reptheorem}

In the case when the rational map $f$ is hyperbolic (and
postcritically finite) there is a necessary and sufficient condition
that $f$ arises as a mating. Namely $f$ arises as a mating if and only
if $f$ has an \defn{equator} $\EC$. This is a Jordan curve $\EC
\subset \CDach \setminus \post(f)$ such that $f^{-1}(\EC)$ is
orientation-preserving isotopic to $\EC$ rel.\ $\post(f)$. This
theorem seems to be folklore, but does not appear (to the knowledge of
the author) in the literature. We give a proof here for the
convenience of the reader, see Theorem~\ref{thm:mate_hyp}. 

The existence of an equator however is not the right condition for $f$
to arise as a mating in the non-hyperbolic case. There are many
examples of maps, in particularly Latt\`{e}s maps and maps where
$\#\post(f)=3$, which can be shown to have no equator
(Proposition~\ref{prop:d23_equator},
Proposition~\ref{prop:Lattes_equator}, and
Corollary~\ref{cor:hyp_no_equator}). Nevertheless, it is known that
many of these maps do indeed arise as a mating. 

\smallskip
A sufficient condition that a postcritically finite rational map $f$
with Julia set $\J(f)=\CDach$ arises as a mating is the existence of a
\defn{pseudo-equator}. This means there is a Jordan curve $\CC\subset
\CDach$ that \emph{contains} all postcritical points. Furthermore
there is a pseudo-isotopy $H$ rel.\ $\post(f)$ that deforms $\CC$ to
$\CC^1:= f^{-1}(\CC)$ in an \emph{elementary, orientation-preserving}
way. Such a pseudo-equator is not only sufficient for $f$ to arise as
a mating, but allows one to \defn{unmate} $f$ as well. More precisely
from the pseudo-equator (i.e., from the way $\CC$ is deformed to
$\CC^1$) one obtains a matrix. From this matrix one can obtain the
\emph{critical portraits} of two polynomials $P_\wt, P_\bt$. These
critical portraits determine $P_\wt, P_\bt$ uniquely. The map $f$ is
(topologically conjugate to)
the topological mating of $P_\wt, P_\bt$. 

The question arises whether our sufficient condition is in fact
necessary. The answer however turns out to be no as an explicit
example shows. 

\smallskip
The organization of this paper is as follows. In
Section~\ref{sec:background} we review Moore's theorem. While this
theorem is essential in the theory of matings, one usually only
encounters a weak form of the theorem. We present a stronger version
that deserves to be much better known. 

In Section~\ref{sec:mating-polynomials} we review the mating
construction, as well as relevant theorems. 

In Section~\ref{sec:hyperb-rati-maps} we consider hyperbolic rational
maps. We consider \defn{equators}, existence and their connection to matings.

Section~\ref{sec:an-example} introduces an example (it is a Latt\`{e}s
map), which is used in the following as an illustration. 

The sufficient criterion for $f$ to arise as a mating mating from
\cite{inv_Peano} and 
\cite{exp_quotients} for (postcritically finite) rational maps (whose
Julia set is the whole sphere) is presented in
Section~\ref{sec:anoth-suff-crit}. Namely $f$ arises as a mating if
$f$ has a pseudo-equator.

An equivalent formulation of the sufficient criterion is given in
Section~\ref{sec:connections}. Let $\CC\subset \CDach$ be a
Jordan curve with $\post(f)\subset \CC$ we consider the preimages of
the two components of $\CDach \setminus \CC$. We color one component
of $\CDach \setminus \CC$ white, the other black. Each component of
$\CDach \setminus f^{-1}(\CC)$ is a preimage of one component of
$\CDach \setminus \CC$. The closure of one such component is called a
\defn{$1$-tile}. It it colored white/black if is mapped by $f$ to the
white/black component of $\CDach \setminus \CC$ respectively. The
$1$-tiles tile the sphere in a checkerboard fashion. At critical
points several $1$-tiles intersect. At each critical point we assign a
\defn{connection}. This is a formal assignment which $1$-tiles are
``connected'' at this critical point.
 
The existence of a pseudo-equator is equivalent to the existence of
connections at each critical point, such that the white $1$-tiles form
a spanning tree. Furthermore the ``outline'' of this spanning tree
must be orientation-preserving isotopic to the Jordan curve $\CC$. 

\smallskip
We will need a description of polynomials that is adapted to matings,
i.e., a description in terms of external angles. The one that best
suits our needs is the description via \defn{critical portraits},
introduced by Bielefeld-Fisher-Hubbard and Poirier (see
\cite{MR1149891} and \cite{MR2496235}). Essentially one records the
external angles at critical points. Each such critical portrait yields
a (unique up to affine conjugation) monic polynomial. We review the
necessary material in Section~\ref{sec:critical-portraits}. 

In Section~\ref{sec:find-polyn-mating} we present the algorithm to
unmate rational maps. Namely one obtains from a pseudo-equator a
matrix that encodes how one edge in $\CC$ is deformed to several in
$\CC^1:= f^{-1}(\CC)$ by the corresponding pseudo-isotopy. From the
Perron-Frobenius eigenvector one obtains the critical portraits of the
two polynomials into which $f$ unmates. 

In Section~\ref{sec:examples-unmatings} we present several examples of
unmatings of rational maps. 

\smallskip
In Section~\ref{sec:mating-not-arising} we show that the existence of
a pseudo-equator is not necessary for a rational map $f$ to arise as a
mating. Namely we construct a (rational, postcritically finite)
rational map $f$, whose Julia set is the whole sphere. In fact $f$ is
constructed as the (topological) mating of two (quadratic
postcritically finite) polynomials. We then show that $f$ has no
pseudo-equator.   

\smallskip
We end the paper in Section~\ref{sec:open-questions} with several open
questions. 

\subsection{Notation}
\label{sec:notation}

The Riemann sphere is denoted by $\CDach = \C\cup\{\infty\}$, the
$2$-sphere by $S^2$ (which is a topological object, not equipped with
a conformal structure). 
The unit circle is $S^1$, which will often be
identified with $\R/\Z$. 
The circle at infinity is $\{\infty\cdot
e^{2\pi i \theta} \mid \theta \in \R/\Z\}$, the extension of $\C$ to
the circle at infinity is $\overline{\C}= \C \cup \{\infty\cdot
e^{2\pi i \theta} \mid \theta \in \R/\Z\}$. 
The Julia set is $\J$, the filled Julia set
$\K$. 

We will often \emph{color} objects (polynomials, sets) black and
white. White objects will usually be equipped with a subscript $\wt$,
black objects will be equipped with a subscript $\bt$. 

If $f= a_0 + a_n(z-c)^n + \dots$, then the local degree of $f$ at $c$
is $\deg_f(c)=n$. 

\section{Moore's theorem}
\label{sec:background}

Let $X$ be a compact metric space. 
An equivalence relation $\sim$ on $X$ is called \defn{closed} if
$\{(x,y) \in X\times 
X \mid x\sim y\}$ is a closed subset of $X\times X$. Equivalently, for
any two convergent sequences $(x_n), (y_n)$ in $X$ with $x_n\sim y_n$
for all $n\in \N$, it holds $\lim x_n = \lim y_n$.  

A map $f\colon X\to Y$ \emph{induces} an equivalence relation on $X$ as
follows. For all $x,x'\in X$ let
$x\sim x'$ if and only if $f(x)=f(x')$. 

\begin{lemma}
  \label{lem:sim_f}
  Let $X,Y$ be compact metric spaces, and $f\colon X\to Y$ be a
  continuous surjection. Let $\sim$ be the equivalence relation on $X$
  induced by $f$. Then $\sim$ is closed and $X/\!\sim$ is homeomorphic
  to $Y$.  
\end{lemma}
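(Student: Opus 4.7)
The plan is to verify the two claims in order: first closedness of $\sim$, then the homeomorphism $X/\!\sim\,\cong Y$. Both steps are standard point-set topology; the main subtlety is only to invoke compactness and the Hausdorff property at the right place.

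For closedness, I would take two convergent sequences $(x_n), (y_n) \subset X$ with $x_n \to x$, $y_n \to y$, and $x_n \sim y_n$, i.e., $f(x_n) = f(y_n)$ for all $n$. Continuity of $f$ gives $f(x_n) \to f(x)$ and $f(y_n) \to f(y)$; since $Y$ is Hausdorff, limits of sequences are unique, so $f(x) = f(y)$, i.e., $x \sim y$. This is precisely the sequential characterization of closedness stated in the definition.

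For the homeomorphism, I would factor $f$ through the quotient. By the definition of $\sim$, the map $f$ is constant on each equivalence class, so it descends to a well-defined map $\bar f\colon X/\!\sim\, \to Y$ with $\bar f\circ \pi = f$, where $\pi\colon X \to X/\!\sim$ is the quotient projection. The universal property of the quotient topology gives continuity of $\bar f$. By construction $\bar f([x]) = \bar f([x'])$ forces $f(x)=f(x')$, hence $[x]=[x']$, so $\bar f$ is injective; surjectivity follows from surjectivity of $f$. Thus $\bar f$ is a continuous bijection.

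To upgrade $\bar f$ to a homeomorphism, note that $X/\!\sim$ is compact, being the continuous image of the compact space $X$ under $\pi$, while $Y$ is Hausdorff (as a metric space). A continuous bijection from a compact space to a Hausdorff space is automatically closed, hence a homeomorphism. The only mildly delicate point is having the closed-graph characterization of $\sim$ available if one later wants $X/\!\sim$ itself to be metrizable or Hausdorff intrinsically, but for the statement as given one does not even need this, since the homeomorphism is produced directly via $\bar f$.
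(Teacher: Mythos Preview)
Your proof is correct and follows essentially the same route as the paper: sequential closedness via continuity of $f$, then factoring $f$ through the quotient to get a continuous bijection and concluding via the compact-to-Hausdorff argument. The paper is slightly terser (it writes ``thus $h$ is continuous, thus a homeomorphism'' without naming the compact/Hausdorff step explicitly), but the content is the same.
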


\begin{proof}
  Let $x_n\to x$ and $y_n\to y$ be two convergent sequence in $X$,
  such that $x_n \sim y_n$ for all $n\in \N$. Then
  \begin{equation*}
    f(x)= \lim f(x_n)= \lim f(y_n) = f(y).
  \end{equation*}
  Thus $x\sim y$. This shows that $\sim$ is closed.

  \smallskip
  Define $h\colon X/\!\sim \,\,\to Y$ by $h([x])= f(x)$. Clearly this is
  a well-defined bijective map. 
  Let $\pi\colon X\to X/\sim$ be the quotient map. 
  Consider an open set $V\subset Y$. Then $f^{-1}(V)=
  \pi^{-1}(h^{-1}(V))$ is open. Thus $h^{-1}(V)$ is open in $X/\!\sim$,
  by the definition of the quotient topology. Thus $h$ is continuous,
  thus a homeomorphism. 
\end{proof}

\smallskip
Moore's theorem is of central importance in the theory of
matings. However in the literature on matings one usually only
encounters a weak form of the theorem. 

% It
% is usually stated as follows. Given a non-trivial (i.e. there are at
% least two equivalence classes) eqivalence relation on $S^2$
% that is closed, such that each equivalence class $[x]$ is compact,
% connected, and $S^2\setminus [x]$ is connected then $S^2/\sim$ is
% homemorphic to $S^2$. However a much stronger statement is true. 

\begin{definition}
  \label{def:Moore-type}
  An equivalence relation $\sim$ on $S^2$ is called of
  \emph{Moore-type} if 
  \begin{enumerate}[\upshape(1)]
  \item 
    \label{item:Mooretype1}
    $\sim$ is \emph{not trivial}, i.e., there are at least two
    distinct equivalence classes;
  \item 
    \label{item:Mooretype2}
    $\sim$ is \emph{closed};
  \item 
    \label{item:Mooretype3}
    each equivalence class $[x]$ is \emph{connected};
  \item 
    \label{item:Mooretype4}
    no equivalence class \emph{separates} $S^2$, i.e., $S^2\setminus [x]$ is
    {connected} for each equivalence class $[x]$.  
  \end{enumerate}
\end{definition}

The reason for the name ``Moore-type'' is the classical theorem of
Moore that asserts that if $\sim$ is of Moore-type, then the quotient space
$S^2/\!\sim$ is homeomorphic to $S^2$. This statement is however true in a
stronger form, which we present next. 

\begin{definition}
  \label{def:pseudo-isotopy}
  A homotopy $H \colon X\times [0,1]\to X$ is called a
  \emph{pseudo-isotopy} if $H\colon X\times[0,1) \to X$ is an
  isotopy (i.e., $H(\cdot, t)$ is a homeomorphism for all $t\in
  [0,1)$).  
  We will always assume that $H(x,0)= x$ for all $x\in X$.

  Given a set $A\subset S^2$, we call $H$
  a pseudo-isotopy \defn{rel.\ $A$} if $H$ is a homotopy rel.\ $A$,
  i.e., if $H(a,t)=a$ for all $a\in A$, $t\in [0,1]$. We call the map $h:=
  H(\cdot, 1)$ the \defn{end} of the pseudo-isotopy $H$.    
\end{definition}
We interchangeably write $H(\cdot, t)= H_t(\cdot)$ to unclutter notation.

\begin{lemma}
  \label{lem:pseudo-equiv}
  Let $H\colon S^2\times I \to S^2$ be a pseudo-isotopy. We consider
  the \emph{equivalence relation induced by the end of the
    pseudo-isotopy}, i.e., for $x,y\in S^2$
  \begin{equation*}
    x\sim y \quad \Leftrightarrow \quad H_1(x)= H_1(y).
  \end{equation*}
  Then $\sim$ is of Moore-type. 
  % \begin{enumerate}[\upshape(1)]
  % \item 
  %   \label{item:Hsim1}
  %   $S^2/\sim = S^2$;
  % \item 
  %   \label{item:Hsim2}
    
  % \end{enumerate}
\end{lemma}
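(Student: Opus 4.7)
The plan is to verify the four conditions of Definition~\ref{def:Moore-type} for the end map $H_1\colon S^2 \to S^2$. Two preliminary facts drive the rest of the argument: $H$ is uniformly continuous on the compact space $S^2\times[0,1]$, and $H_1$ is a continuous surjection. Surjectivity follows by a compactness argument: for any $y\in S^2$ and $t_n\nearrow 1$, each $x_n := H_{t_n}^{-1}(y)$ is defined since $H_{t_n}$ is a homeomorphism, and a subsequential limit $x_n\to x$ satisfies $H_1(x) = \lim H(x_n,t_n) = y$ by joint continuity. Condition (1) then follows because $S^2$ has more than one point; condition (2) is immediate, since $x_n\to x$ and $y_n\to y$ with $H_1(x_n)=H_1(y_n)$ forces $H_1(x)=H_1(y)$ by continuity.

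For condition (3), fix an equivalence class $K := H_1^{-1}(y_0)$ and suppose for contradiction that $K = A\sqcup B$ with $A,B$ nonempty, closed, and disjoint. Pick disjoint open neighborhoods $U\supset A$, $V\supset B$, and set $W := S^2\setminus(U\cup V)$. Since $W\cap K = \emptyset$, the compact set $H_1(W)$ misses $y_0$, so $B_\eta(y_0)\cap H_1(W) = \emptyset$ for some $\eta > 0$. Uniform continuity of $H$ supplies $t_0<1$ such that $d(H_t(z),H_1(z))<\eta/2$ for every $z\in S^2$ and $t\in[t_0,1]$. For such $t<1$, the open set $H_t^{-1}(B_{\eta/2}(y_0))$ is connected (preimage of an open disk under the homeomorphism $H_t$), disjoint from $W$, and hence lies in exactly one of $U$ or $V$; but it also contains all of $K=A\cup B$, because $H_t(K)\subset B_{\eta/2}(y_0)$. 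This forces $A$ and $B$ to lie on the same side, contradicting $U\cap V=\emptyset$.

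For condition (4), the approach is to exhibit $K$ as a nested intersection of closed topological disks, from which non-separation follows easily. Given small $\epsilon>0$, uniform continuity provides $t(\epsilon)<1$ with $H_{t(\epsilon)}(K)\subset B_\epsilon(y_0)$; then $K_\epsilon := H_{t(\epsilon)}^{-1}\bigl(\overline{B_\epsilon(y_0)}\bigr)$ is a closed topological disk containing $K$, being the homeomorphic image of a round closed disk on $S^2$. The key shrinking property is that any limit point of a sequence $x_n\in K_{\epsilon_n}$ with $\epsilon_n\to 0$ satisfies $H_1(x)=\lim H(x_n,t(\epsilon_n))=y_0$ by joint continuity, hence lies in $K$; a standard compactness argument then shows that $K_\epsilon$ is contained in any prescribed open neighborhood of $K$ for all sufficiently small $\epsilon$. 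If $K$ were to separate $S^2$, pick $p,q$ in distinct components of $S^2\setminus K$ and take $\epsilon$ small enough that $K_\epsilon$ avoids $\{p,q\}$; since $S^2\setminus K_\epsilon$ is an open topological disk, hence connected, there is a path from $p$ to $q$ in $S^2\setminus K_\epsilon\subset S^2\setminus K$, contradicting the separation.

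The hardest ingredient is condition (4); the real work is not the final path argument but the careful pairing of $\epsilon$ with $t(\epsilon)$ needed to ensure the $K_\epsilon$ are honest closed disks that actually shrink down to $K$. Conditions (1) and (2) are bookkeeping once surjectivity of $H_1$ is in hand, and (3) uses the same uniform-continuity template as (4) in slightly easier form.
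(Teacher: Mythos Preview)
Your proof is correct, but it diverges from the paper's in interesting ways. For surjectivity the paper invokes non-contractibility of $S^2$ (degree of $H_1$ equals degree of $H_0=\id$), whereas your compactness argument is more elementary and works on any compact space. For condition~(3) the paper separates $[x]$ by a Jordan curve $C$ disjoint from $[x]$, normalizes so that $H$ fixes a point $x_0\in[x]$, and tracks a point on the far side of $C$ as it is dragged toward $x_0$; your uniform-continuity argument with the connected preimage $H_t^{-1}(B_{\eta/2}(y_0))$ is essentially the same idea packaged differently. The real divergence is condition~(4): the paper observes that Lemma~\ref{lem:sim_f} already gives $S^2/\!\sim\,\cong S^2$, so once each class is connected, a separating class would yield a cut point of $S^2$, which is absurd. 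This is one line. Your nested-disk construction is longer but has the virtue of being entirely self-contained --- it never appeals to the quotient being a sphere. One small point to tighten: your shrinking argument for the $K_\epsilon$ uses ``$H_1(x)=\lim H(x_n,t(\epsilon_n))$ by joint continuity,'' which needs $t(\epsilon_n)\to 1$; this is automatic when $K$ has more than one point (since $H_t$ is injective for $t<1$, so $\operatorname{diam} H_t(K)>0$ is bounded away from zero on any compact subinterval of $[0,1)$), and the singleton case is trivial, but it deserves a sentence.
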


\begin{proof}
  Since $S^2$ is not contractable, it follows that $H_1$ is
  surjective. Thus $\sim$ is not trivial, i.e., not all points in
  $S^2$ are equivalent. Lemma~\ref{lem:sim_f} shows that $\sim$ is
  closed. Furthermore $S^2/\!\sim\,= S^2$.

  \smallskip
  Consider an equivalence class $[x]$. Assume it is not
  connected. Then there is a Jordan curve $C$ that separates
  $[x]$. Let $x_0\in [x]$. We can assume that $H$ keeps $x_0$ fixed,
  otherwise we can compose $H$ with an isotopy such that the
  composition keeps $x_0$ fixed. There is a $\delta>0$ such that
  $\dist(H_t(C),x_0)>\delta$ for all $t\in [0,1]$. 

  We call the component of
  $S^2\setminus C$ containing $x_0$ the interior of $C$, the other
  one the exterior of $C$. Consider a point $y_0\in [x]$ in the
  exterior of $C$. Let $y_t:=H_t(y_0)$ for $t\in [0,1]$. By assumption
  $\lim_{t\to 1} y_t = x_0$. Thus there is a $t_0\in [0,1)$ such that
  $\abs{y_{t_0} - x_0}< \delta/2$. It follows that $y_{t_0}$ is in the
  same component of $C_{t_0}:= H_{t_0}(C)$ as $x_0$. This is impossible.  

  \smallskip
  Assume now that there is an equivalence class $[x]$ that separates
  $S^2$, i.e., $S^2 \setminus [x]$ has (at least) two
  components. We already know that each distinct equivalence class
  $[y]$ is connected. Thus it follows that each equivalence class
  $[y]\neq [x]$ is contained in one of the components of $S^2
  \setminus [x]$. It follows that if we remove the point $[x]$ from
  $S^2/\!\sim$ we obtain a disconnected set. This violates the fact that
  $S^2/\!\sim$ is homeomorphic to $S^2$. 
\end{proof}

\begin{theorem}[Moore, 1925]
  \label{thm:Moore}
  Let $\sim$ be an equivalence relation on $S^2$. Then $\sim$ is of
  Moore-type if and only if $\sim$ can be realized as the end of a pseudo-isotopy $H\colon
  S^2\times [0,1]\to S^2$ (i.e., $x\sim y \Leftrightarrow H_1(x)=
  H_1(y)$). 
\end{theorem}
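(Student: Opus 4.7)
The plan is to handle the two directions separately. The reverse implication (pseudo-isotopy end $\Rightarrow$ Moore-type) is exactly Lemma~\ref{lem:pseudo-equiv}, so the substance lies in the forward direction. My strategy is to use the classical form of Moore's theorem to identify $\Sp/\!\sim$ with $\Sp$, producing a continuous surjection $\pi\colon \Sp\to \Sp$ with $\pi(x)=\pi(y)\Leftrightarrow x\sim y$, and then realize a map in the equivalence class of $\pi$ (up to post-composition with a self-homeomorphism of $\Sp$, which by Lemma~\ref{lem:sim_f} is harmless) as the $t\to 1$ limit of a continuous family of self-homeomorphisms $H_t$ of $\Sp$ through ambient isotopy.

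The key technical input is the shrinkability of the decomposition $\mathcal{D}=\{[x]:x\in \Sp\}$. Conditions (\ref{item:Mooretype2})--(\ref{item:Mooretype4}) of Definition~\ref{def:Moore-type} say exactly that $\mathcal{D}$ is an upper semicontinuous decomposition of $\Sp$ into non-separating continua, i.e.\ into cellular sets on $\Sp$. The shrinking principle (essentially Bing's argument, specialized to $\Sp$) then provides, for every $\varepsilon>0$, a homeomorphism $h_\varepsilon\colon \Sp\to \Sp$, isotopic to the identity through compactly supported homeomorphisms, such that $\diam(h_\varepsilon([x]))<\varepsilon$ for every equivalence class $[x]$. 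Geometrically one uses that each non-separating continuum in $\Sp$ lies in arbitrarily small simply connected open sets, which can be contracted by ambient isotopies whose supports can be kept disjoint for distinct classes (using upper semicontinuity to control how nearby classes cluster).

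From this input I would build $H$ inductively. Choose $\varepsilon_n\downarrow 0$ with $\sum \varepsilon_n<\infty$. At stage $n$ produce $h_n$ with $\diam(h_n([x]))<\varepsilon_n$ for every $[x]$, supporting the connecting isotopy from $h_{n-1}$ to $h_n$ in a neighborhood of the already-small sets $h_{n-1}([x])$ that is small enough to keep distinct classes disjoint. Concatenating these isotopies on the intervals $[1-2^{-n},1-2^{-n-1}]$ yields an isotopy $H\colon \Sp\times[0,1)\to \Sp$. Summability of $\varepsilon_n$ together with the support control gives uniform convergence of $H_t$ as $t\to 1$ to a continuous map $H_1$, and the diameter control forces $H_1(x)=H_1(y)\Leftrightarrow x\sim y$, so $H$ is the desired pseudo-isotopy.

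The hard part is the inductive step: engineering $h_{n+1}$ so that it shrinks diameters below $\varepsilon_{n+1}$ while (i) moving each point by no more than $\varepsilon_{n+1}$ in a sup-norm sense and (ii) keeping the connecting isotopy inside neighborhoods of $\{h_n([x])\}$ small enough that no two distinct equivalence classes are merged at any intermediate time. This is the classical Bing shrinking manoeuvre, which one can carry out concretely on $\Sp$ by enclosing the (finitely many) large classes $[x]$ with $\diam h_n([x])\geq \varepsilon_{n+1}$ in disjoint open Jordan disks and applying a radial shrinking inside each; the remaining classes are already small enough to be left essentially alone. Everything else---producing $\pi$ from classical Moore, verifying upper semicontinuity of $\mathcal{D}$, passing to the uniform limit, and identifying the limit with $\sim$---is routine bookkeeping.
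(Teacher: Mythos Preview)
Your treatment of the reverse implication matches the paper exactly: both invoke Lemma~\ref{lem:pseudo-equiv}. For the forward implication the paper does not give a proof at all; it simply cites Daverman's monograph \cite[Theorem~25.1 and Theorem~13.4]{MR872468}. Your sketch is in the right spirit---the route through upper semicontinuous cellular decompositions and Bing shrinkability is precisely the modern proof recorded there---so at the level of strategy you and the cited reference agree.

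That said, the concrete inductive step you propose has a genuine gap. You write that at each stage one may ``enclose the (finitely many) large classes $[x]$ with $\diam h_n([x])\geq \varepsilon_{n+1}$ in disjoint open Jordan disks.'' In general there are \emph{not} finitely many such classes. A standard example: embed $C\times[0,1]$ in a disk in $\Sp$, where $C$ is a Cantor set, and declare each fibre $\{c\}\times[0,1]$ to be an equivalence class (and every other point a singleton). This relation is closed, each class is a non-separating continuum, yet there are uncountably many classes of diameter $1$. So the naive ``radially shrink each big class inside its own disk'' manoeuvre cannot be carried out as stated, and upper semicontinuity alone does not rescue finiteness.

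The actual shrinking argument in dimension $2$ is more delicate: one works with a covering of the quotient by small open sets, pulls back to saturated open sets, and uses the fact that in the plane a cellular set has arbitrarily small simply connected neighbourhoods to perform simultaneous, nested contractions controlled by the \emph{quotient} topology rather than by a count of large elements. This is exactly the content of the cited theorems in Daverman, and it is where the real work lies. Your outline would be a correct proof once this step is replaced by the genuine shrinkability argument; as written, the finiteness claim is the missing idea.
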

The ``if-direction'' was shown in Lemma~\ref{lem:pseudo-equiv}. 
A proof of the other direction can be found in
\cite[Theorem~25.1 and Theorem~13.4]{MR872468}. From
Lemma~\ref{lem:sim_f} we immediately recover the original form of
Moore's theorem, i.e., that $S^2/\!\sim$ is homeomorphic to $S^2$. 

The following theorem was originally proved by Baer, see \cite{baer27}
and \cite{baer28},
a more modern treatment can be found in \cite[Theorem 2.1]{MR0214087}.

\begin{theorem}
  \label{thm:homot_isotop}
  Let $P\subset S^2$ be a finite set and $\CC,\CC'\colon S^1 \to
  S^2\setminus P$ be Jordan curves that are homotopic in $S^2\setminus
  P$, i.e., there is a homotopy $H\colon S^1\times I\to S^2\setminus
  P$ such that $H(S^1,0) = \CC$ and $H(S^1,1)=\CC'$. Furthermore
  each component of $S^2\setminus \CC$ contains at least a point in
  $P$. 

  Then $\CC,\CC'$ are isotopic rel.\ $P$, i.e., there is an isotopy
  $K\colon S^2\times I \to S^2$ rel.\ $P$ such that $K(\cdot,0)= \id_{S^2}$
  and $K(\CC,1)=\CC'$.
\end{theorem}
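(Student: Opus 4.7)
The plan is to convert the homotopy into combinatorial data and then realize the required isotopy from that data. First, since $H$ avoids $P$, for each $p\in P$ the winding number $w(H(\cdot,t),p)$ of the loop $H(\cdot,t)\colon S^1\to S^2\setminus\{p\}$ is an integer-valued continuous function of $t$, hence constant in $t$. For a Jordan curve in $S^2$ this winding number equals $0$ or $\pm 1$ according to which of the two Schoenflies domains contains $p$ (with a fixed orientation convention). Consequently $\CC$ and $\CC'$ induce the same partition $P=A\sqcup B$, and by the hypothesis that each complementary component of $\CC$ meets $P$, both $A$ and $B$ are nonempty.

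Next I would apply the Schoenflies theorem to obtain closed disks $D_1,D_2$ bounded by $\CC$ and $D_1',D_2'$ bounded by $\CC'$, with $A\subset\inte D_1\cap\inte D_1'$ and $B\subset\inte D_2\cap\inte D_2'$. Using the elementary fact that a closed topological disk with a marked finite set in its interior is unique up to orientation-preserving homeomorphism fixing the marked points, I would build orientation-preserving homeomorphisms $\phi_1\colon D_1\to D_1'$ and $\phi_2\colon D_2\to D_2'$ restricting to the identity on $A$ and $B$ respectively. After a collar adjustment that reparameterizes the boundary of one disk so that $\phi_1|_\CC=\phi_2|_\CC$, the two maps glue to an orientation-preserving homeomorphism $\phi\colon S^2\to S^2$ satisfying $\phi(\CC)=\CC'$ and $\phi|_P=\id_P$.

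The remaining task, and the main obstacle, is to produce an ambient isotopy $K$ from $\id_{S^2}$ to such a $\phi$ that fixes $P$ pointwise throughout; then $K$ will be the desired isotopy. This step is genuinely subtle because the pure mapping class group of $(S^2,P)$ is nontrivial when $\#P\ge 4$, so a priori $\phi$ could represent a nontrivial braid- or twist-type element, even though it takes $\CC$ to $\CC'$ and fixes $P$. The point of having the hypothesized homotopy $H$ is precisely that it allows one to select the disk homeomorphisms $\phi_1,\phi_2$ compatibly so that the resulting $\phi$ lies in the identity component of $\operatorname{Homeo}(S^2,P)$. This is the technical heart of the Baer--Epstein theorem; rather than reproduce it, I would invoke \cite[Theorem 2.1]{MR0214087}, whose proof proceeds by taking a transverse representative of the homotopy and successively removing innermost bigons of self-intersection via Alexander's trick, thereby converting the homotopy into an ambient isotopy rel $P$.
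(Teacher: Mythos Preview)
The paper does not prove this theorem at all; it merely states it and cites Baer \cite{baer27,baer28} and Epstein \cite[Theorem~2.1]{MR0214087} for the proof. Your proposal is therefore already more detailed than what appears in the paper, and in the end you invoke the very same Epstein reference for the technical core (promoting a homotopy of curves to an ambient isotopy rel.\ $P$), so there is no meaningful difference in approach to compare.

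One small correction is worth noting. Your winding-number step, as written, does not make sense: you speak of the winding number of the loop $H(\cdot,t)$ in $S^2\setminus\{p\}$, but $S^2\setminus\{p\}$ is simply connected, so every loop there is null-homotopic and carries no intrinsic winding information. What you actually want is a \emph{relative} invariant: for each pair $p,q\in P$ the class of $H(\cdot,t)$ in $H_1(S^2\setminus\{p,q\};\Z)\cong\Z$ is locally constant in $t$, and for a Jordan curve this class is $\pm 1$ or $0$ according to whether $p,q$ lie on opposite sides or the same side. Equivalently, fix one $p_0\in P$, identify $S^2\setminus\{p_0\}$ with $\C$, and compute ordinary winding numbers around the remaining points of $P$. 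With this repair your first paragraph goes through and yields the common partition $P=A\sqcup B$ as intended.
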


\section{Mating of polynomials}
\label{sec:mating-polynomials}

An excellent introduction to matings can be found in
\cite{MilnorMating}. In fact this paper was the main inspiration for
the papers \cite{inv_Peano} and \cite{exp_quotients}. See also
\cite{MR2636558} and \cite{MR1758798}. 

We still give the basic definitions here, but we will be brief.

\smallskip
Recall that a \defn{Thurston map} is an orientation-preserving
branched covering map $f\colon S^2\to S^2$ of degree at least $2$,
that is postcritically 
finite (see \cite{THEbook} for more background). A Thurston map $f$ is 
called a \defn{Thurston polynomial} if there is a totally invariant
point, i.e., a point $\infty\in S^2$ such that $f(\infty) =
f^{-1}(\infty)= \infty$.

\subsection{Topological mating}
\label{sec:topological-mating}

Let $P$ be a monic polynomial (i.e., the coefficient of the leading
term is $1$) with connected and locally connected filled Julia set
$\K$. Let $\phi\colon \CDach\setminus \Dbar\to \CDach\setminus \K$ be
the Riemann map normalized by $\phi(\infty)=\infty$ and
$\phi'(\infty)=\lim_{z\to \infty} \phi(z)/z >0$ (in fact then
$\phi'(\infty)=1$). By Carath\'{e}odory's theorem (see for example
\cite[Theorem 17.14]{Milnor}) $\phi$    
extends continuously to
\begin{equation}
  \label{eq:defCaraSemi-Conjugacy}
  \sigma\colon S^1=\partial \Dbar\to\partial \K=\J, 
\end{equation}
where $\J$ is the \defn{Julia set} of $P$. 
We call the map $\sigma$ the \defn{Carath\'{e}dory semi-conjugacy} of
$\J$. We remind the reader that every postcritically finite polynomial
has connected and locally connected filled Julia set (see for example
\cite[Theorem 19.7]{Milnor}). 

Consider the equivalence relation on $S^1$ induced by the
Carath\'{e}odory semi-conjugacy, namely 
\begin{equation}
  \label{eq:approx_Cara}
  s\sim t \; :\Leftrightarrow \;\sigma(s)= \sigma(t),
\end{equation}
for all $s,t \in S^1$. Lemma~\ref{lem:sim_f} yields that
$S^1/\!\sim$ is homeomorphic to 
$\J$, where the homeomorphism is given by $h\colon S^1/\!\sim\; \to
\J$, $[s] \mapsto \sigma(s)$. B\"{o}ttcher's theorem (see
for example \cite[$\S$ 9]{Milnor}) says that the Riemann map $\phi$
conjugates $z^d$ to the polynomial $P$ on $\CDach\setminus \Dbar
$, where
$d=\deg P$. This means that the following diagram commutes,
\begin{equation}
  \label{eq:CaraCommDia}
  \xymatrix{
    S^1/\!\sim \ar[r]^{z^d/\sim} \ar[d]_{h}
    &
    S^1/\!\sim \ar[d]^{h}
    \\
    \J \ar[r]_P & \J.
  }
\end{equation}

% \smallskip
% Recently (analogously defined) \emph{random laminations} have been used
% to study the scaling 
% limits of planar maps (see \cite{MR2336042}, \cite{MR2438999}).

% \subsection{Mating of polynomials}
% \label{sec:mating-polynomials}

% Douady and Hubbard \cite{MR728980} (see also \cite{MilnorMating}),
% introduced the construction of 
% \emph{mating of polynomials}  as a way to
% geometrically combine two polynomials to form a rational map. The main
% result of the paper present is that some iterate of each expanding
% Thurston map is obtained in this way.  
% We recall the construction briefly, before stating the main theorems
% precisely.

It will be convenient to identify the unit circle $S^1$ with
$\R/\Z$. We still write 
\begin{equation}
  \label{eq:cara_semi_RZ}
  \sigma\colon \R/\Z\to \J=\partial \K
\end{equation}
for the Carath\'{e}odory semi-conjugacy.

\smallskip
Consider two monic polynomials $P_\wt,P_\bt$ (called the \emph{white} and
the \emph{black} polynomial) of the same degree with
connected and locally connected Julia sets. 
Let $\sigma_\wt,\sigma_\bt$ be the Carath\'{e}odory semi-conjugacies of
their Julia sets $\J_\wt,\J_\bt$. 

Glue the filled Julia sets $\K_\wt,\K_\bt$ (of $P_\wt,P_\bt$) together by
identifying $\sigma_\wt(t)\in \partial \K_\wt$ with
$\sigma_\bt(-t)\in \partial \K_\bt$. More precisely,
we consider the disjoint union of $\K_\wt,\K_\bt$, and let
$\K_\wt\mate \K_\bt$ be the quotient obtained from the equivalence
relation generated by $\sigma_\wt(t)\sim \sigma_\bt(-t)$ for all $t\in
\R/\Z$. The minus sign is customary here, though not essential:
identifying $\sigma_\wt(t)$ with $\sigma_\bt(t)$ amounts to the mating of
$P_\wt$ with $\overline{P_\bt(\bar{z})}$.  
The \defn{topological mating of $P_\wt,P_\bt$} is the map
\begin{align*}
  &P_\wt\mate P_\bt \colon \K_\wt\mate \K_\bt \to \K_\wt\mate \K_\bt,
  \intertext{given by}
  &P_\wt\mate P_\bt|_{\K_i}=P_i,
\end{align*}
for $i=\wt,\bt$. It follows from (\ref{eq:CaraCommDia}) that it is well
defined, 
namely that $x_\wt\sim x_\bt \Rightarrow P_\wt(x_\wt)\sim P_\bt(x_\bt)$ (for all
$x_\wt\in \K_\wt, x_\bt\in \K_\bt$). 
% If a map
% is topologically conjugate 
% to a map $P_\wt\mate P_\bt$ we say it is obtained as a (topological)
% mating. 

\begin{definition}
  \label{def:arising_mating}
  A Thurston map $f\colon S^2\to S^2$ is said to \defn{arise as
    mating} if $f$ is topologically conjugate to $P_\wt\mate P_\bt$, the (topological)
  mating of polynomials $P_\wt,P_\bt$. 

  The Thurston map $g\colon S^2\to S^2$ is \defn{equivalent to a
    mating} if $g$ is Thurston equivalent to a Thurston map $f\colon
  S^2\to S^2$ arising as a mating. 
\end{definition}

One of the most important result in the theory of matings is the
Rees-Shishikura-Tan theorem.

\begin{theorem}[\cite{MR1182664}, \cite{MR1149864},\cite{MR1765095}] 
  \label{thm:matingdeg2}
  Let $P_\wt= z^2+ c_\wt, P_\bt = z^2+ c_\bt$ be two quadratic
  postcritically finite polynomials, where $c_\wt, c_\bt$ are not in
  conjugate limbs of the Mandelbrot set. Then the topological mating
  of $P_\wt, P_\bt$ exists and is topologically conjugate to a
  (postcritically finite) rational map (of degree $2$). 
\end{theorem}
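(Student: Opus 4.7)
The plan is to construct the \emph{formal mating} $F = P_\wt \sqcup P_\bt$ as a Thurston map on $S^2$, invoke Thurston's theorem to realize $F$ by a rational map $f$, and then recover the topological mating as a Moore-type quotient of $S^2$ which is shown topologically conjugate to $f$.

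First I would form the formal mating. Extend each polynomial $P_i$ (for $i = \wt, \bt$) to the compactification $\overline{\C}_i = \C \cup \{\infty \cdot e^{2\pi i \theta} : \theta \in \R/\Z\}$; on the added circle at infinity, $P_i$ acts as the doubling map $\theta \mapsto 2\theta$. Glue the two resulting closed disks along their circles at infinity via $\infty \cdot e^{2\pi i t} \mapsto \infty \cdot e^{-2\pi i t}$ to obtain a topological sphere $S^2_F$. The two polynomials piece together into a postcritically finite branched covering $F \colon S^2_F \to S^2_F$ of degree $2$, whose critical points are the finite critical points of $P_\wt$ and $P_\bt$, and whose postcritical set is $\post(P_\wt) \sqcup \post(P_\bt)$. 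This is a Thurston map.

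Next I would apply Thurston's theorem to $F$: it is Thurston equivalent to a rational map unless it carries a Thurston obstruction, i.e.\ an $F$-invariant multicurve whose transition matrix has spectral radius $\geq 1$. The combinatorial heart of the argument, due to Rees, Shishikura, and Tan, identifies the possible obstructions of a quadratic formal mating with \emph{Levy cycles of ray-equivalence classes}, and shows that such a cycle exists if and only if $c_\wt$ and $c_\bt$ lie in conjugate limbs of the Mandelbrot set. Since by hypothesis they do not, no obstruction exists, and Thurston's theorem produces a rational map $f$ such that $F$ and $f$ are Thurston equivalent.

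Finally I would descend from $F$ to the topological mating. On $S^2_F$ consider the \emph{ray-equivalence relation} $\approx$ generated by identifying $\sigma_\wt(t)$ with $\sigma_\bt(-t)$ for all $t \in \R/\Z$. The absence of the Levy cycles ruled out in the previous step implies that every $\approx$-class is connected and non-separating; together with non-triviality and with closedness (immediate from continuity of $\sigma_\wt, \sigma_\bt$), this verifies all four conditions of Definition~\ref{def:Moore-type}. Moore's theorem (Theorem~\ref{thm:Moore}) then provides a pseudo-isotopy realizing $\approx$, with quotient $S^2_F/\!\approx$ homeomorphic to $S^2$. By construction the dynamics of $F$ descend to a branched covering on this quotient that coincides with the topological mating $P_\wt \mate P_\bt$, so the topological mating exists. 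A standard lifting argument, using the Thurston equivalence $F \simeq f$ together with rigidity of postcritically finite rational maps, upgrades the resulting semi-conjugacy into a topological conjugacy between $P_\wt \mate P_\bt$ and $f$.

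The main obstacle is the combinatorial step at the center of this plan: classifying Thurston obstructions of the formal mating $F$ in terms of pairs of identified external angles under doubling, and matching this classification to Douady's description of conjugate limbs of the Mandelbrot set. Once this is in hand, both the Moore-type verification for $\approx$ and the passage from Thurston equivalence to an actual topological conjugacy are comparatively routine.
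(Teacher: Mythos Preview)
The paper does not supply its own proof of this theorem; it is stated with citations to Rees, Shishikura, and Tan Lei as a known result. Your outline matches the standard approach in those references, so there is nothing to compare against in the paper itself.

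Two points in your sketch deserve comment. First, closedness of the ray-equivalence relation is not quite ``immediate from continuity of $\sigma_\wt, \sigma_\bt$'': the relation is the one \emph{generated} by identifying ray endpoints, and closedness of a generated equivalence relation does not follow from continuity of the generators alone. One needs that each ray-equivalence class is a finite tree, and this finiteness already uses the non-conjugate-limbs hypothesis (via Tan Lei's analysis). Second, the final step---passing from the Thurston equivalence $F \simeq f$ to a genuine topological conjugacy between $P_\wt \mate P_\bt$ and $f$---is not ``comparatively routine.'' This is precisely the Rees--Shishikura theorem (the paper records the hyperbolic case as Theorem~\ref{thm:formal_top}; the general postcritically finite case is in \cite{MR1765095}), and it requires a careful pullback argument rather than a simple rigidity appeal. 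It is one of the substantial contributions of the cited papers, not an afterthought.
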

Recall the a \emph{limb} of the Mandelbrot set is a component of the
complement of the main cardioid.

\subsection{Formal mating}
\label{sec:formal-mating}

We now define the \defn{formal mating}. Its main purpose is to break
up the topological mating into several steps. 

We extend $\C$ to the \defn{circle at infinity} $\{\infty \cdot
e^{2\pi i\theta} \mid \theta \in \R/\Z\}$, and let $\overline{\C}= \C
\cup \{\infty \cdot
e^{2\pi i\theta} \mid \theta \in \R/\Z\}$. Every monic polynomial $P$
of degree $d$ extends to $\overline{\C}$ by $P(\infty \cdot e^{2\pi i
    \theta}) = \infty\cdot e^{2\pi i d \theta}$.  

Consider now two monic polynomials $P_\wt\colon \overline{\C}_\wt \to
\overline{\C}_\wt, P_\bt\colon \overline{\C}_\bt \to
\overline{\C}_\bt$ of the same degree $d$. Here $\overline{\C}_\wt=
\overline{\C}_\bt =\overline{\C}$, but we prefer that $P_\wt,
P_\bt$ are defined on disjoint domains. 
On the disjoint union
$\overline{\C}_\wt \sqcup \overline{\C}_\bt$ we define the equivalence
relation $\approx$ by identifying $\infty \cdot e^{2\pi i \theta}\in
\overline{\C}_\wt$ with $\infty\cdot e^{- 2\pi i \theta} \in
\overline{\C}_\bt$. We call the quotient
\begin{equation*}
  \overline{\C}_\wt \uplus \overline{\C}_\bt 
  := \overline{\C}_\wt \sqcup \overline{\C}_\bt/ \approx,
\end{equation*}
the \emph{formal mating sphere}. Clearly this is a topological
sphere. The map 
$$P_\wt\uplus P_\bt\colon \overline{\C}_\wt \uplus \overline{\C}_\bt 
\to
\overline{\C}_\wt \uplus \overline{\C}_\bt$$ 
given by
\begin{equation*}
  P_\wt \uplus P_\bt(x) =
  \begin{cases}
    P_\wt(x),  & x\in \overline{\C}_\wt\\
    P_\bt(x), & x\in \overline{\C}_\bt
  \end{cases}
\end{equation*}
is well defined. It is called the \defn{formal mating} of $P_\wt,
P_\bt$. The set $\{\infty \cdot e^{2\pi i \theta} \mid \theta \in
\R/\Z\}\subset \overline{\C}_\wt \subset \overline{\C}_\wt \uplus
\overline{\C}_\bt$ is called the \defn{equator} of the formal mating
sphere. 

\smallskip
The \emph{ray-equivalence} relation is the equivalence relation $\sim$
on the formal mating sphere generated by external rays. Here a \defn{closed
external ray} $R(\theta)\subset \overline{\C}_\wt$ (where $\theta \in
\R/\Z$) is one that includes its landing point, as well as its point
at infinity, i.e., $\infty\cdot e^{2\pi i \theta} \in R(\theta)$. Then
$\sim$ is the smallest equivalence relation on $\overline{\C}_\wt
\uplus \overline{\C}_\bt$ such that all points in any closed external
ray $R(\theta)\subset \overline{\C}_\wt$, as well as all points in any
closed external ray $R(\theta)\subset \overline{\C}_\bt$, are
equivalent. 

\smallskip
It is easy to show that $P_\wt \uplus P_\bt$ descends to the quotient
$\overline{\C}_\wt \uplus \overline{\C}_\bt/\!\sim\,$. Moreover this
quotient map is topologically conjugate to the topological mating
$P_\wt \mate P_\bt$. 

\smallskip
We will need the following theorem, originally proved by Rees
\cite[\S 1.15]{MR1149864}. A more general version was proved by
Shishikura \cite[Theorem 1.7]{MR1765095}.
\begin{theorem}
  \label{thm:formal_top}
  Let $P_\wt, P_\bt$ be two hyperbolic, postcritically finite, monic,
  polynomials, such that the formal mating is Thurston equivalent to a
  (postcritically finite) rational map $f\colon \CDach \to
  \CDach$. Then the topological mating $P_\wt\mate P_\bt$ is
  topologically conjugate to $f$. 
\end{theorem}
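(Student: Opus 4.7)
The plan is to realise the topological mating as the ``semi-conjugacy quotient'' of the formal mating obtained via Thurston pullback, and then to identify this quotient with $f$. Write $F := P_\wt \uplus P_\bt$ for the formal mating and $\Sigma := \overline{\C}_\wt \uplus \overline{\C}_\bt$ for the formal mating sphere. By hypothesis there are orientation preserving homeomorphisms $\phi_0, \phi_1 \colon \Sigma \to \CDach$, isotopic rel $\post(F)$, with $\phi_0 \circ F = f \circ \phi_1$. Since $f$ realises the Thurston class of $F$ (so $F$ has no Thurston obstruction), the pullback construction inductively produces $\phi_n \colon \Sigma \to \CDach$ with $\phi_n \circ F = f \circ \phi_{n+1}$, all mutually isotopic rel $\post(F)$. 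Because $P_\wt, P_\bt$ are hyperbolic and PCF their critical orbits land in superattracting cycles, and this is a Thurston invariant, so $f$ is hyperbolic too. This yields a metric expanding near $\J(f)$, while each Fatou component is a superattracting basin mapped as a branched cover of a disk. Standard contraction estimates for the Thurston pullback then allow one to pass to a uniform limit $\phi_\infty \colon \Sigma \to \CDach$, a continuous surjection satisfying $\phi_\infty \circ F = f \circ \phi_\infty$.

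The next step, which I expect to be the main obstacle, is to show that the fibres of $\phi_\infty$ are \emph{exactly} the ray-equivalence classes of $F$. One direction is essentially combinatorial: every rational external ray is (pre)periodic under $F$, and the expansion near $\J(f)$ forces its diameter (measured via the pullback metric) to shrink to zero along the pullback, so $\phi_\infty$ must collapse each closed external ray to a point. Since the equator identification glues a ray $R_\wt(\theta) \subset \overline{\C}_\wt$ to $R_\bt(-\theta) \subset \overline{\C}_\bt$ through their common point at infinity, $\phi_\infty$ factors through the ray-equivalence relation. The reverse direction requires excluding any extra collapsing; this is where hyperbolicity is used crucially. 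On each Fatou component of $F$ one compares B\"ottcher coordinates with those of the target component of $f$ through $\phi_\infty$ and shows the restriction is a homeomorphism away from boundary identifications already accounted for; on $\J(F)$, uniform expansion of $f$ combined with the combinatorial description of landing patterns at preperiodic points rules out further identification.

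Once the fibres of $\phi_\infty$ coincide with the ray-equivalence classes, Lemma~\ref{lem:sim_f} delivers a homeomorphism $h \colon \Sigma/\!\sim \,\to \CDach$ with $\phi_\infty = h \circ \pi$, where $\pi$ is the quotient map. The relation $\phi_\infty \circ F = f \circ \phi_\infty$ descends to $h \circ (P_\wt \mate P_\bt) = f \circ h$, since by construction $P_\wt \mate P_\bt$ is the descent of $F$ to $\Sigma/\!\sim$. This exhibits the desired topological conjugacy, and as a by-product forces the ray-equivalence classes to satisfy the Moore-type conditions of Definition~\ref{def:Moore-type}, consistent with the topological mating living on a genuine $2$-sphere.
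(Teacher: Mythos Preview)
The paper does not prove this theorem; it is quoted as a result of Rees \cite[\S 1.15]{MR1149864}, with a more general version due to Shishikura \cite[Theorem~1.7]{MR1765095}. So there is no in-paper argument to compare against.

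Your outline is the correct strategy and is essentially the Rees--Shishikura approach: iterate the Thurston pullback to produce a semi-conjugacy $\phi_\infty$, then identify its fibres with ray-equivalence classes. Two of your steps, however, are doing a great deal of unacknowledged work. First, ``standard contraction estimates for the Thurston pullback then allow one to pass to a uniform limit'' is not free: Thurston iteration converges in Teichm\"uller space, but upgrading this to uniform convergence of the actual maps $\phi_n$ on all of $\Sigma$ requires a genuine argument (normalisation, control on the Fatou pieces via B\"ottcher coordinates, and expansion near $\J(f)$ to control the Julia piece). Second, the ``reverse direction'' --- that $\phi_\infty$ collapses \emph{nothing more} than ray-equivalence classes --- is the heart of the Rees--Shishikura theorem, and your sketch (``compare B\"ottcher coordinates'', ``uniform expansion \dots rules out further identification'') names the ingredients but not the mechanism. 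In particular one must show that distinct points of $\J_\wt$ (or $\J_\bt$) not linked by a ray chain have $\phi_\infty$-images separated, which uses the landing theory and the structure of the ray-equivalence classes as finite trees rather carefully.

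So: right route, but as written this is a plan rather than a proof. If you want to make it self-contained you should either supply those two arguments in detail or, as the paper does, cite Rees and Shishikura.
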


It is well-known that the Julia set of a postcritically finite
rational map is locally connected. Furthermore each point in the Julia
set of a polynomial is the landing point of only finitely many
external rays (this is Douady's lemma, see
\cite[Lemma~4.2]{MR1765095}). Assume the quotient $\C_\wt \uplus
\C_\bt/\sim$, equivalently $\mathcal{K}_\wt \mate \mathcal{K}_\bt$, is
a topological sphere. Then 
each ray equivalence 
class forms a (actually finite) tree. If this tree does not contain a
critical point it is mapped homeomorphically by the formal mating
$P_\wt \uplus P_\bt$. We have thus
the following lemma.

\begin{lemma}
  \label{lem:preimages}
  Assume the ray-equivalence relation $\sim$ on the formal mating sphere is
  given as above. 
  Let $x\in S^1$ (i.e., in the equator of the formal mating sphere)
  and $[x]$ (i.e., the equivalence class of $x$) be not 
  a postcritical point of the formal 
  mating $P_\wt\uplus P_\bt$. Then the $d$
  preimages of $[x]$ by $P_\wt\uplus P_\bt$ are given by $[(x+k)/d]$
  for $k=0,\dots, d-1$. The sets $[(x+k)/d]$ are disjoint.
\end{lemma}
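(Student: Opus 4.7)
The plan is to exploit the unbranched covering structure provided by the hypothesis that $[x]$ is not postcritical, together with the observation (made just before the lemma) that a ray equivalence class containing no critical point is mapped homeomorphically by $F := P_\wt\uplus P_\bt$. First I would identify the equator preimages: since $F$ restricted to the equator $\R/\Z$ is $\theta\mapsto d\theta$ (because $P_\wt(\infty\cdot e^{2\pi i\theta})=\infty\cdot e^{2\pi i d\theta}$, and similarly under the antipodal identification on the black side), the $d$ equator preimages of $x$ are precisely $(x+k)/d$, $k=0,\ldots,d-1$. Next I would record that $F$ carries each closed external ray $R_\wt(\alpha)$ onto $R_\wt(d\alpha)$ and each $R_\bt(\alpha)$ onto $R_\bt(d\alpha)$, so $F$ respects $\sim$; in particular $F\bigl([(x+k)/d]\bigr)\subseteq [x]$, giving the desired inclusion.

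To promote these to the complete list of preimages I would use that $[x]$ is not postcritical: no critical value of $F$ lies in $[x]$, so no critical point lies in $F^{-1}([x])$ (a critical $c$ with $F(c)\in [x]$ would exhibit the critical value $F(c)$ as a postcritical point of $[x]$). Choosing an open neighborhood $U\supset [x]$ in the formal mating sphere whose closure avoids $\post(F)$, the restriction $F\colon F^{-1}(U)\to U$ is an unbranched $d$-sheeted covering. Thus $F^{-1}(U)=U_0\sqcup\cdots\sqcup U_{d-1}$, with each sheet $U_k$ mapped homeomorphically to $U$ by $F$. Intersecting with $F^{-1}([x])$ yields $d$ pairwise disjoint connected copies of the finite tree $[x]$, and each such copy carries exactly one equator preimage of $x$; label so that the copy in $U_k$ contains $(x+k)/d$.

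Finally I would identify the component in $U_k$ with the full ray class $[(x+k)/d]$. This component is the $F$-lift of the tree $[x]$ into $U_k$: it consists of the sheet-$k$ preimage of each external ray of $[x]$, together with the corresponding landing points and equator endpoints. It is therefore $\sim$-saturated, and being connected and containing the equator point $(x+k)/d$, it coincides with the entire ray class $[(x+k)/d]$. Combining, $F^{-1}([x])=\bigsqcup_{k=0}^{d-1}[(x+k)/d]$, and disjointness of the classes follows at once from disjointness of the sheets.

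The main technical point is this last identification of a sheet-component with the \emph{whole} ray class rather than a proper subset; it relies on the finiteness of $[x]$ (Douady's lemma together with postcritical finiteness, as invoked just before the lemma). Without finiteness, an unbounded chain of ray identifications could a priori extend a sheet-component beyond the local covering picture, so this input is essential.
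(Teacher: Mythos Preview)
Your argument is correct and is essentially a careful expansion of what the paper does: the paper states the lemma as an immediate consequence of the preceding paragraph (finite ray-trees containing no critical point are mapped homeomorphically by $P_\wt\uplus P_\bt$), without spelling out the details. Your covering-sheet argument makes this precise; the only small point you leave implicit is that the neighborhood $U$ of the finite tree $[x]$ can be taken simply connected (so that $F^{-1}(U)$ really splits into $d$ sheets), which is immediate since $[x]$ is contractible.
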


% \subsection{Mating of Thurston polynomials}
% \label{sec:mating-thurst-polyn}

% A \emph{Thurston polynomial} is a Thurston map $g\colon S^2\to S^2$,
% that has a point $\infty\in S^2$ that is completely invariant, i.e.,
% $g^{-1}(\infty)= \{\infty\}$. Then $\infty$ is both a fixed point and
% $\deg_f(\infty)= d= \deg(g)$, i.e., $\infty\in \post(g)$.
% The map $g$ is said to have an \emph{invariant equator}, if there is a
% Jordan curve $\EC\subset S^2 \setminus \post(g)$ that separates
% $\infty$ from the other points of $\post(g)$, $\EC$ is invariant for
% $g$, and $g\colon \EC \to \EC$ is topologically conjugate to
% $z^d\colon S^1\to z^d$. This means there is a homeomorphism
% $\sigma\colon \EC \to S^1$, such that $\sigma\circ g(x)=
% (\sigma(x))^d$ for all $x\in\EC$. 

% \begin{lemma}
%   \label{lem:T_poly}
%   Let $p\colon S^2\to S^2$ be a Thurston polynomial. Then
%   \begin{enumerate}[\upshape(a)]
%   \item $p$ is Thurston equivalent to a Thurston polynomial
%     $P\colon S^2\to S^2$ that has an invariant equator
%     $\EC$.
%   \end{enumerate}
% \end{lemma}

% Assume $P_\wt, P_\bt$ be Thurston polynomials with invariant equators
% $\EC_\wt, \EC_\bt$ and conjugacies $\sigma_\wt\colon \EC_\wt \to S^1$,
% $\sigma_\bt\colon \EC_\wt\to \EC_\bt$. 

\section{Equators and hyperbolic rational maps}
\label{sec:hyperb-rati-maps}

\begin{definition}[equator]
  Let $f\colon S^2\to S^2$ be a Thurston map, a Jordan curve
  $\EC\subset S^2\setminus \post(f)$ is an \defn{equator} for $f$ if
  the following three conditions are satisfied.
  \begin{enumerate}
  \item 
    \label{item:eqator1}
    $\widetilde{\EC}:= f^{-1}(\EC)$ consists of a single
    component.
    \setcounter{mylistnum}{\value{enumi}}
  \end{enumerate}

  Since $\EC$ contains no postcritical point, (\ref{item:eqator1}) implies
  that $\widetilde{\EC}$ is a Jordan curve (in $S^2\setminus
  f^{-1}(\post(f))\subset S^2\setminus \post(f)$). Furthermore the
  degree of the map $f\colon \widetilde{\EC} \to \EC$ is $d=\deg f$. 
  \begin{enumerate}
    \setcounter{enumi}{\value{mylistnum}}
  \item 
    \label{item:equator2}
    $\widetilde{\EC}$ is isotopic to
    $\EC$ rel.\ $\post(f)$.  
    \setcounter{mylistnum}{\value{enumi}}
  \end{enumerate}
  Fix an orientation of $\EC$. The curve $\widetilde{\EC}$ then
  inherits an orientation in two distinct ways. First the (covering)
  map $f\colon\widetilde{\EC} \to \EC$ induces an orientation on
  $\widetilde{\EC}$. 

  Second, according to (\ref{item:equator2}) let $H_t$ be the isotopy rel.\ $\post(f)$
  that deforms $\EC$ to $\widetilde{\EC}$, i.e., $H_0 = \id_{S^2},
  H_1(\EC)=\widetilde{\EC}$. We choose the orientation on
  $\widetilde{\EC}$ such that (the homeomorphism) $H_1\colon
  \widetilde{\EC} \to \EC$ is
  orientation-preserving.  

  \begin{enumerate}
    \setcounter{enumi}{\value{mylistnum}}
  \item 
    \label{item:equator3}
    $\widetilde{\EC}$ is \defn{orientation-preserving} isotopic to
    $\EC$ rel.\ $\post(f)$.  
  \end{enumerate}
  This means that the two orientations on $\widetilde{\EC}$ as given
  above agree. 
\end{definition}

If there is an equator $\EC$ for the Thurston map $f$, we say that $f$
has an equator. The importance of equators for matings is shown by the
following theorem. 

\begin{theorem}
  \label{thm:mate_hyp}
  A hyperbolic postcritically finite rational map $f$ arises as a mating
  if and only if $f$ has an equator. 
\end{theorem}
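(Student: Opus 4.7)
The plan is to compare $f$ with the formal mating $F := P_\wt \uplus P_\bt$ on the formal mating sphere $\overline{\C}_\wt \uplus \overline{\C}_\bt$, which tautologically admits the circle at infinity $\EC_\infty$ as an equator: $F^{-1}(\EC_\infty) = \EC_\infty$ as a degree-$d$ self-cover of the circle, so all three conditions in the definition hold vacuously. Both directions reduce to setting up a Thurston equivalence between $F$ and $f$ and either transporting $\EC_\infty$ into $\CDach$, or conversely, constructing such an $F$ from a given equator.

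For the ``only if'' direction, assume $f$ is topologically conjugate to $P_\wt \mate P_\bt$ via some homeomorphism $\tau$. I would first promote $\tau$ to a Thurston equivalence: the composition $\tau \circ \pi \colon \overline{\C}_\wt \uplus \overline{\C}_\bt \to \CDach$, with $\pi$ the ray-equivalence quotient, is of Moore-type (the ray-classes are closed, connected, and non-separating finite trees) and semi-conjugates $F$ to $f$. The strong form of Moore's theorem (Theorem~\ref{thm:Moore}) realizes $\tau \circ \pi$ as the end of a pseudo-isotopy, so for $t$ close to $1$ the time-$t$ slice is a homeomorphism $\phi_0$ close to $\tau \circ \pi$ that equals $\tau \circ \pi$ on $\post(F)$ (the postcritical points lie inside singleton equivalence classes in the Fatou sets and so can be kept fixed throughout the pseudo-isotopy). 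Using that $\phi_0 \circ F$ is close to $f \circ \phi_0$, I lift $\phi_0 \circ F$ through the branched cover $f$ to obtain a second homeomorphism $\phi_1$ with $f \circ \phi_1 = \phi_0 \circ F$ and $\phi_0 \simeq \phi_1$ rel.\ $\post(F)$. Then $\EC := \phi_0(\EC_\infty)$ is the desired equator: it avoids $\post(f) = \phi_0(\post(F))$ since $\EC_\infty \cap \post(F) = \emptyset$, its preimage $f^{-1}(\EC) = \phi_1(\EC_\infty)$ is a single Jordan curve, and the isotopy from $\phi_0$ to $\phi_1$ produces a homotopy from $\EC$ to $f^{-1}(\EC)$ in $\CDach \setminus \post(f)$ that Theorem~\ref{thm:homot_isotop} upgrades to an orientation-preserving isotopy rel.\ $\post(f)$ (each complementary disk contains the postcritical points of the corresponding polynomial).

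For the ``if'' direction, let $\EC$ be an equator with complementary disks $U_\wt, U_\bt$, and let $\widetilde{\EC} = f^{-1}(\EC)$ bound $\widetilde U_\wt, \widetilde U_\bt$. The orientation-preserving isotopy rel.\ $\post(f)$ forces $f \colon \widetilde U_i \to U_i$ to be a degree-$d$ branched covering carrying every postcritical point of $f$ in $U_i$ into $\widetilde U_i$; using hyperbolicity of $f$, I can choose a representative in the isotopy class of $\EC$ so that $\widetilde U_i \subset U_i$ strictly. I would then construct a hyperbolic postcritically finite topological polynomial $\tilde P_i$ on the sphere $S^2_i := \bar U_i \cup_{\EC} D_i$ obtained by capping $U_i$ with a disk $D_i$ carrying a new totally invariant critical fixed point $\infty_i$ of local degree $d$, and extending $f|_{\widetilde U_i}$ across the annulus $\bar U_i \setminus \widetilde U_i$ by a $d$-to-$1$ power map onto $D_i$. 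The standard Levy--Berstein theorem for hyperbolic topological polynomials (no Thurston obstruction can appear, since any hypothetical Levy cycle would either lie in $\bar U_i$ and pull back to a Thurston obstruction of the hyperbolic rational map $f$, or would separate $\infty_i$ from the remaining postcritical points and be covered by $\tilde P_i$ with degree $d \neq 1$) then yields honest polynomials $P_i$ with $\tilde P_i$ Thurston equivalent to $P_i$. Patching the two equivalences along $\EC$ using the orientation-preserving condition, the formal mating $P_\wt \uplus P_\bt$ becomes Thurston equivalent to the ``topological formal mating'' $\tilde P_\wt \uplus \tilde P_\bt$, which tautologically coincides with $f$ once the caps $D_\wt, D_\bt$ are removed and the two hemispheres are reassembled along $\EC$. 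Since $f$ is rational, postcritically finite, and hyperbolic, Theorem~\ref{thm:formal_top} upgrades this Thurston equivalence to the topological conjugacy $P_\wt \mate P_\bt \cong f$.

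The main obstacle in both directions is bridging topological quotient data with Thurston-equivalence data. In the $\Rightarrow$ direction, the technical heart is lifting the pseudo-isotopy from Moore's theorem through the branched cover $f$ to produce a genuine second homeomorphism $\phi_1$ rather than merely a nearby continuous map, while keeping $\post(F)$ fixed throughout; this really uses the refined pseudo-isotopy form of Moore's theorem (Theorem~\ref{thm:Moore}), not the classical homeomorphism formulation, together with careful bookkeeping that the postcritical points sit inside singleton equivalence classes so the homotopy between $\phi_0$ and $\phi_1$ is rel.\ the postcritical set. In the $\Leftarrow$ direction, the delicate step is the absence of a Thurston obstruction for each capped-off polynomial $\tilde P_i$; this genuinely requires the hyperbolic hypothesis on $f$, and it is here that the analogous statement for non-hyperbolic maps breaks down, which is precisely why equators are not the right criterion outside the hyperbolic setting.
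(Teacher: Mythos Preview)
Your proposal is correct and follows essentially the same route as the paper: for the ``if'' direction you cap each hemisphere with a disk whereas the paper collapses the (isotoped) equator to a point, but both then run the same L\'evy--cycle/obstruction argument and invoke Rees--Shishikura (Theorem~\ref{thm:formal_top}); for the ``only if'' direction you freeze the Moore pseudo-isotopy at a time $t<1$ and lift through $f$ to obtain a Thurston-equivalence pair $(\phi_0,\phi_1)$, while the paper carries out the identical manoeuvre directly on the curve $H_t(\EC_\infty)$, using Lemma~\ref{lem:preimages} to verify that the lifted curve is the \emph{entire} preimage. One small correction: the strict nesting $\widetilde U_i\subset U_i$ is neither a consequence of hyperbolicity nor needed---the paper simply postcomposes $f$ with the isotopy $H_1$ so that $\widetilde\EC$ becomes invariant, and your capping construction goes through verbatim after that normalization.
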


We will need some preparation for the proof of this theorem. 
There are however many classes of Thurston maps that do not have an
equator.  
% On the other hand, the existence of an equator is not the right
% condition for a map $f$ to arise as a mating in the case when $f$ is
% not hyperbolic. 
 
\begin{prop}
  \label{prop:d23_equator}
  Let $f\colon S^2\to S^2$ be a Thurston map that is not a Thurston
  polynomial, with $\#\post(f) \leq 3$. Then $f$ does not have an
  equator. 
\end{prop}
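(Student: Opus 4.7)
The plan is to assume for contradiction that $f$ has an equator $\EC$ and show this forces $f$ to be a Thurston polynomial. Let $\widetilde{\EC}:=f^{-1}(\EC)$, a single Jordan curve by the first clause of the definition. Both $\EC$ and $\widetilde{\EC}$ separate $S^2$ into two open topological disks. Using the orientation-preserving isotopy $H$ rel.\ $\post(f)$ deforming $\EC$ to $\widetilde{\EC}$, I would label these disks $D_\wt,D_\bt$ and $\widetilde{D}_\wt,\widetilde{D}_\bt$ so that simultaneously $f^{-1}(D_i)=\widetilde{D}_i$ and $H_1(D_i)=\widetilde{D}_i$ for $i=\wt,\bt$. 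Compatibility of these two labelings is exactly the content of the orientation-preserving clause in the equator definition: the orientations that $f$ and $H_1$ induce on $\widetilde{\EC}$ coincide, so they place the same side of $\EC$ on the same side of $\widetilde{\EC}$. Setting $P_i:=\post(f)\cap D_i$, the fact that $H$ fixes $\post(f)$ pointwise yields $P_i=\post(f)\cap\widetilde{D}_i$, and combined with $\widetilde{D}_i=f^{-1}(D_i)$ this makes each $P_i$ forward $f$-invariant: if $p\in P_\wt\subset\widetilde{D}_\wt=f^{-1}(D_\wt)$, then $f(p)\in D_\wt\cap\post(f)=P_\wt$.

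Next I would show that both $P_\wt,P_\bt$ are nonempty. If, say, $P_\bt=\emptyset$, then Riemann--Hurwitz applied to the degree-$d$ branched cover $f\colon\widetilde{D}_\bt\to D_\bt$ between topological disks gives
\[
\sum_{c\in\widetilde{D}_\bt\cap\crit(f)}(\deg_f(c)-1)=d-1\ge 1,
\]
producing a critical point in $\widetilde{D}_\bt$ whose image is a postcritical point in $D_\bt$, contradicting $P_\bt=\emptyset$.

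Since $\#P_\wt+\#P_\bt=\#\post(f)\leq 3$ with both nonzero, at least one of them, say $P_\bt$, is a singleton $\{p\}$, and forward invariance forces $f(p)=p$. Every critical point of $f$ lying in $\widetilde{D}_\bt$ has image in $D_\bt\cap\post(f)=\{p\}$, so lies in $f^{-1}(p)$; conversely $f^{-1}(p)\subset f^{-1}(D_\bt)=\widetilde{D}_\bt$. Writing $f^{-1}(p)=\{q_1,\dots,q_k\}$ with local degrees $m_i=\deg_f(q_i)$ summing to $d$, the Riemann--Hurwitz identity above becomes $\sum_i(m_i-1)=d-k=d-1$, forcing $k=1$ and thus $f^{-1}(p)=\{p\}$. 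Hence $p$ is totally invariant and $f$ is a Thurston polynomial, contradicting the hypothesis.

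The main delicate point is matching the two partitions of $S^2\setminus\widetilde{\EC}$---one coming from $f^{-1}$ and one from $H_1$---which is precisely what the orientation-preserving clause of the equator definition provides. The remainder is a direct Riemann--Hurwitz count combined with the observation that each side of $\EC$ carries a forward $f$-invariant subset of $\post(f)$, and the smallness hypothesis $\#\post(f)\leq 3$ leaves no room for a nontrivial invariant set on the side containing only one postcritical point.
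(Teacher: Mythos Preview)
Your proof is correct and takes a genuinely different route from the paper's.

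The paper proceeds by direct case analysis on $\#\post(f)$. For $\#\post(f)\le 2$ it invokes the classification of Thurston maps with two postcritical points (they are equivalent to $z^m$), and for $\#\post(f)=3$ it uses that every essential simple closed curve in $S^2\setminus\post(f)$ is peripheral, i.e.\ isotopic to a small loop around a single postcritical point $p$; the preimage of such a loop is a disjoint union of small loops around the points of $f^{-1}(p)$, and connectedness of $f^{-1}(\EC)$ forces $\#f^{-1}(p)=1$, while the isotopy condition forces that unique preimage to be $p$ itself.

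Your argument is more structural: rather than classifying curves, you use the equator (and the orientation-preserving clause) to split $\post(f)$ into two forward-invariant pieces $P_\wt,P_\bt$, then apply Riemann--Hurwitz on the side containing a single postcritical point to conclude that this point is totally invariant. This is cleaner and avoids the curve classification entirely; it also isolates exactly where the orientation-preserving hypothesis enters (matching the $f^{-1}$-labeling of the sides with the $H_1$-labeling). The paper's approach, by contrast, is more concrete and topological, and yields slightly more information about \emph{how} the isotopy condition fails in each case (e.g.\ that for $z^m$ with $m\le -2$ the failure is precisely orientation reversal). Both arguments ultimately hinge on the same fact---that one side of an equator must contain exactly one postcritical point which is then forced to be totally invariant---but yours arrives there more directly.
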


\begin{proof}
  No Thurston map with $\#\post(f)=0$ or $\#\post(f)=1$ exists (see
  \cite[Remark 5.5]{THEbook}).

  If $\#\post(f)=2$ then $f$ is Thurston equivalent to $g=z^m\colon
  \CDach \to \CDach$, where $m\in \Z\setminus \{-1,0,1\}$ (see
  \cite[Proposition 6.3]{THEbook}). The case $m\geq 2$ means that $f$
  is a Thurston polynomial. In the case $m\leq -2$ there is no
  equator, since each Jordan curve $\CC\subset \CDach\setminus
  \post(g)= \C\setminus \{0\}$ which separates the postcritical points
  $0,\infty$ is isotopic (rel.\ $\post(g)=\{0,\infty\}$) to a small
  closed loop around $0$. The preimage of such a loop is isotopic
  rel.\ $\{0,\infty\}$ to itself, but not orientation-preserving
  isotopic to itself.  

  \smallskip
  Assume now that $\post(f)$ consists of three points, which are
  denoted for convenience by $0,1,\infty$. Let $\EC\subset
  S^2\setminus \post(f)$ be a Jordan curve, which we assume to be not
  null-homotopic. Then $\EC$ is isotopic in $S^2\setminus \post(f)$ to
  a small loop $\CC$ around a postcritical point, without loss of generality
  around $\infty$. Let $\widetilde{\CC}:= f^{-1}(\CC)$. For each $c\in
  f^{-1}(\infty)$ there is exactly one small loop around $c$ in
  $\widetilde{\CC}$. Thus $\widetilde{\CC}$ is a single component if
  and only if $\infty$ has a single preimage $c$ by $f$. If $c=\infty$
  this means that $f$ is a Thurston polynomial. If $c\neq \infty$,
  this means that $\widetilde{\CC}$ is not isotopic rel.\ $\post(f)$
  to $\CC$. Thus if $f$ is not a Thurston polynomial it follows 
  that $\CC$, hence $\EC$, is not an equator of $f$.  
\end{proof}

\begin{prop}
  \label{prop:Lattes_equator}
  Let $f\colon \CDach \to \CDach$ be a Latt\`{e}s map. Then $f$ does
  not have an equator. 
\end{prop}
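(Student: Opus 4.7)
The plan is to exploit the flat orbifold structure carried by any Latt\`{e}s map. Recall that on $\CDach$ there is an $f$-invariant Euclidean orbifold metric whose cone points are exactly the postcritical points, and in this metric $f$ is a local similarity of constant expansion ratio $\lambda = \sqrt{\deg f} > 1$ away from cone points. Assume, for contradiction, that $\EC \subset \CDach \setminus \post(f)$ is an equator for $f$.

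First I would check that $\EC$ is essential in $\CDach \setminus \post(f)$: otherwise $\EC$ bounds a Jordan disk $D$ disjoint from $\post(f)$ (hence disjoint from the critical values of $f$), so $f^{-1}(D)$ is a disjoint union of $\deg f \geq 2$ topological disks, which forces $f^{-1}(\EC)$ to have at least two components and contradicts condition~(\ref{item:eqator1}) of the equator definition.

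Next I would choose a closed geodesic $\gamma$ in the free homotopy class of $\EC$ in $\CDach \setminus \post(f)$ for the flat orbifold metric; since the class is essential, such a $\gamma$ exists, and in a flat geometry all closed geodesics in a given free homotopy class share a common length $\ell$. Because $\post(f)$ is forward invariant, $\post(f) \subset f^{-1}(\post(f))$, hence $f^{-1}(\gamma)$ avoids all orbifold cone points; it is locally geodesic (as the preimage of a geodesic under a local similarity), and the covering-plus-homothety count gives total length $\deg f \cdot \ell / \lambda = \sqrt{\deg f}\,\ell$. Lifting the isotopy from $\EC$ to $f^{-1}(\EC)$ through the covering $f \colon \CDach \setminus f^{-1}(\post(f)) \to \CDach \setminus \post(f)$ shows that $f^{-1}(\gamma)$ is isotopic to $f^{-1}(\EC)$, which by condition~(\ref{item:eqator1}) is a single Jordan curve. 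Thus $f^{-1}(\gamma)$ is itself a single closed geodesic in the free homotopy class of $\gamma$, and length-rigidity forces $\ell = \sqrt{\deg f}\,\ell$, i.e. $\deg f = 1$, a contradiction.

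The main obstacle I expect is justifying the two orbifold-geometric inputs cleanly: existence and length-rigidity of closed-geodesic representatives of essential free homotopy classes in the flat orbifold of signature $(2,2,2,2)$, $(2,3,6)$, $(2,4,4)$, or $(3,3,3)$, and the identification of $\sqrt{\deg f}$ as the local expansion factor of $f$. Both are standard once one passes to the Euclidean universal orbifold cover, where essential closed curves correspond to translation elements of the crystallographic deck group whose translation lengths are conjugacy invariants; however, some care is required to check that $[\EC]$ corresponds to a genuine translation class (rather than to a power of a cone-point loop, for which no translation geodesic exists) — a point guaranteed here by connectedness of $f^{-1}(\EC)$ and the isotopy condition.
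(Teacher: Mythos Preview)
Your argument is correct and takes a genuinely different route from the paper. The paper proceeds by cases: when $\#\post(f)=3$ it invokes Proposition~\ref{prop:d23_equator}, and when $\#\post(f)=4$ it lifts explicitly via the Weierstra\ss\ $\wp$-function, writes $f$ as the quotient of an affine map $z\mapsto az+b$, and then splits again according to whether $a\in\Z$ (so $f^{-1}(\CC)$ has $|a|>1$ components) or $a\notin\R$ (so the lifted lines change slope and the preimage fails to be isotopic to $\CC$). Your length argument replaces this four-point analysis with a single stroke, using only that the expansion factor is $|a|=\sqrt{\deg f}>1$; this is cleaner and more conceptual, at the cost of invoking orbifold-geometric facts (existence and length-rigidity of geodesic representatives) that the paper's bare-hands computation avoids.

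One caution: your sentence ``since the class is essential, such a $\gamma$ exists'' overstates things. Peripheral classes are essential in $\CDach\setminus\post(f)$ yet have no closed-geodesic representative in the flat orbifold metric, and in the three-point signatures $(2,3,6)$, $(2,4,4)$, $(3,3,3)$ \emph{every} essential simple closed curve is peripheral. Thus your final remark is not a side technicality but carries the entire three-point case (and the peripheral sub-case of the four-point case). The argument you gesture at --- that a peripheral equator forces a totally invariant postcritical point, hence makes $f$ a Thurston polynomial, which a Latt\`{e}s map is not --- is exactly the content of Proposition~\ref{prop:d23_equator} and should be written out rather than asserted as ``guaranteed''.
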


\begin{proof}
  A Latt\`{e}s map $f$ has three or four postcritical points. In the
  first case $f$ has no equator by Proposition
  \ref{prop:d23_equator}. 
 
  \smallskip
  In the second case we use the (well-known) explicit description of
  the map $f$, see \cite[Proposition~9.3]{DouHubThurs} and
  \cite[Theorem~3.1, Section~4, and Section~5]{milnor06_lattes}. 

  Namely there is a lattice $\Lambda= \Z\oplus \tau \Z$, where $\tau$
  is in the upper half plane. The group $G$ is the subgroup of
  $\operatorname{Aut}(\C)$ generated by the translations $z\mapsto
  z+1$, $z\mapsto z+\tau$  and the involution $z\mapsto -z$. The
  quotient map $\wp\colon \C \to \C/G$ may be viewed as the
  Weierstra\ss\ $\wp$-function. Finally
  there is a map  $L\colon \C\to \C$, given by $L(z)= az+b$, where
  $a,b\in \C$. The map $f$ then is topologically conjugate to
  $L/G\colon \C/G
  \to \C/G$.   Here
  $\abs{a}^2=\deg f$. Two cases are possible: either $a\in
  \Z\setminus\{-1,0,1\}$ or $a\notin \R$ (indeed, there is a more
  complete description). 

  \smallskip
  Let $\EC\subset S^2\setminus \post(f)$ be an equator for $f$. Since
  $f$ is not a Thurston polynomial it follows that $\EC$ is not
  peripheral, i.e., each component of $S^2\setminus \EC$ contains two
  postcritical points. The curve $\EC$ is isotopic rel.\ $\post(f)$ to
  a ``straight curve'' $\CC$, i.e., every component of
  $\widetilde{\CC}:=\wp^{-1}(\CC)\subset \C$
  is a straight line. Note that if two such curves $\CC, \CC'$ lift to
  lines with distinct slopes, they are not isotopic rel.\ $\post(f)$. 

  \smallskip
  Let $\CC' := f^{-1}(\CC)$. Then the lift by $\wp$ to
  $\C$ satisfies $\widetilde{\CC}':= \wp^{-1}(\CC') =
  L^{-1}(\widetilde{\CC})$.   

  If $a\in \Z\setminus\{-1,0,1\}$ then $\CC$ has $\abs{a}$ preimages
  by $f$, since $L^{-1}(\wp(\CC))/G$ consist of $\abs{a}$ distinct curves.
  In particular $f^{-1}(\CC)$, hence $f^{-1}(\EC)$ does not consist of a single
  component. Thus $\EC$ is not an equator.

  If $a\notin \R$ then $\widetilde{\CC}' = L^{-1}(\widetilde{\CC})$
  consist of parallel lines which intersect the lines in
  $\widetilde{\CC}$ in the angle $\arg a$. In particular each component
  of $\widetilde{\CC}$ is not isotopic rel.\ $\post(f)$ to
  $\CC$. Thus $\CC$, hence $\EC$ is not an equator. 
\end{proof}

The previous result shows the following. While the existence of an
equator is exactly the right condition to check whether a
(postcritically finite) hyperbolic rational map arises as a mating (by
Theorem~\ref{thm:mate_hyp}), it is not the right condition in the case
when $f$ is not hyperbolic. Namely there are many examples of
Latt\`{e}s maps known which arise as a mating (see
\cite{MilnorMating}, \cite{inv_Peano}), yet they do not have an
equator by the previous theorem.

\begin{cor}
  \label{cor:hyp_no_equator}
  A rational map with parabolic orbifold that is not a polynomial does
  not have an equator.   
\end{cor}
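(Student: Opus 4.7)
The plan is to reduce the corollary to Propositions~\ref{prop:d23_equator} and~\ref{prop:Lattes_equator} via the standard classification of orbifolds on $S^2$ of vanishing Euler characteristic. Recall that the signature of a parabolic orbifold must be one of $(\infty,\infty)$, $(2,2,\infty)$, $(3,3,3)$, $(2,4,4)$, $(2,3,6)$, or $(2,2,2,2)$. For a postcritically finite rational map $f$ the cone points of its canonical orbifold coincide with $\post(f)$, so $\#\post(f)\in\{2,3,4\}$; moreover $\#\post(f)=4$ occurs only for signature $(2,2,2,2)$, in which case $f$ is a (flexible) Latt\`es map.

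First I would dispatch the cases $\#\post(f)\le 3$ by invoking Proposition~\ref{prop:d23_equator}. For this one only needs to verify that a rational map $f$ which is not a polynomial is not a Thurston polynomial. Indeed, if $f$ were a Thurston polynomial there would be a totally invariant point $p$; rationality of $f$ forces $f^{-1}(p)=\{p\}$ with $\deg_f(p)=\deg f$, and conjugating by a M\"obius transformation sending $p$ to $\infty$ yields a rational map whose only pole is at $\infty$, of order $\deg f$ --- i.e., a polynomial, contradicting the hypothesis. Proposition~\ref{prop:d23_equator} then closes these cases.

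It remains to handle $\#\post(f)=4$, where the orbifold must have signature $(2,2,2,2)$ and $f$ is a Latt\`es map; Proposition~\ref{prop:Lattes_equator} immediately rules out an equator. The main (and only) obstacle is the appeal to the classification of parabolic orbifolds on $S^2$; once this is granted the corollary is simply a case split between the two preceding propositions.
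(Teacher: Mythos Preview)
Your proof is correct and follows essentially the same approach as the paper: split on $\#\post(f)$ via the classification of parabolic orbifolds, then apply Proposition~\ref{prop:d23_equator} for $\#\post(f)\le 3$ and Proposition~\ref{prop:Lattes_equator} for $\#\post(f)=4$. Your explicit verification that a rational non-polynomial is not a Thurston polynomial is a detail the paper leaves implicit, but otherwise the arguments coincide.
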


\begin{proof}
  A Thurston map $f$ with parabolic orbifold has at most four
  postcritical points. If $\#\post(f)\leq 3$ the result follows from
  Proposition~\ref{prop:d23_equator}. Every rational map with
  parabolic orbifold and $\#\post(f)=4$ is a Latt\`{e}s map, where the
  result follows from Proposition~\ref{prop:Lattes_equator}. 
\end{proof}

\begin{remark}
  It is possible to find obstructed Thurston maps with parabolic
  orbifold that have an equator. Namely the map $z=x+yi \mapsto 2x +
  yi$ descends to the quotient by the Weierstra\ss\
  $\wp$-function. This is an obstructed map with with four
  postcritical points which has an equator. 
\end{remark}

The following theorem appeared first in Wittner's thesis
\cite[Theorem 7.2.1]{MR2636558}, who credits Thurston. Since Wittner's
thesis is not easily available, we provide the proof here.  

\begin{theorem}
  \label{thm:equator_mating}
  Let $f\colon \CDach\to \CDach$ be a postcritically finite rational map,
  that has an equator $\EC\subset S^2\setminus \post(f)$. Then $f$ is
  Thurston equivalent to the formal mating of two polynomials.
\end{theorem}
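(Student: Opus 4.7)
The plan is to modify $f$ by an isotopy so that $\EC$ becomes fully invariant, split the sphere into two forward-invariant hemispheres along $\EC$, cap each hemisphere with a power map to produce a Thurston polynomial, invoke Thurston's theorem to realize each as an honest polynomial, and finally recognize the two pieces glued back together as a formal mating.

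First, let $H\colon S^2\times[0,1]\to S^2$ be the orientation-preserving isotopy rel.\ $\post(f)$ with $H_0=\id$ and $H_1(\EC)=\widetilde{\EC}$ furnished by the equator hypothesis. The map $\tilde f:=f\circ H_1$ is Thurston equivalent to $f$, satisfies $\tilde f^{-1}(\EC)=\EC$, and has $\post(\tilde f)=\post(f)$ (since $H_1$ fixes $\post(f)$ pointwise). Denote by $D_\wt,D_\bt$ the two components of $S^2\setminus\EC$. Because the isotopy is orientation preserving, $\tilde f$ carries each closed hemisphere $\overline{D_\wt},\overline{D_\bt}$ onto itself as a degree-$d$ branched self-cover, and the restriction $\tilde f|_{\EC}$ is an unbranched $d$-fold covering, since a critical point on $\EC$ would be sent to a critical value in $\EC\cap\post(f)=\emptyset$.

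For each color I would then glue an auxiliary closed disk $\overline{\Delta}$ to $\overline{D_\wt}$ (respectively $\overline{D_\bt}$) along $\EC$ via an orientation-preserving homeomorphism that conjugates $\tilde f|_{\EC}$ to the standard power map $z\mapsto z^d$ on $\partial\overline{\Delta}$, and extend the dynamics across $\overline{\Delta}$ by this power map. The result is a branched cover $F_\wt$ (respectively $F_\bt$) of $S^2$ with a totally invariant fixed point of local degree $d$, i.e., a Thurston polynomial. By Thurston's theorem, $F_\wt$ and $F_\bt$ are Thurston equivalent to monic polynomials $P_\wt,P_\bt$ provided they carry no Thurston obstruction. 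To rule out obstructions, I would show that any invariant multicurve of $F_\wt$ can be isotoped off the added disk (inside $\overline{\Delta}$ the only postcritical point is the totally invariant fixed point, so every essential non-peripheral curve confined there is inessential). The resulting multicurve in $\overline{D_\wt}\setminus\post(\tilde f)$ then transports to an invariant multicurve for $\tilde f$ with the same transition matrix, which would obstruct the rational map $f$ and is thus impossible.

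Finally, the formal mating sphere $\overline{\C}_\wt\uplus\overline{\C}_\bt$ is by construction the gluing of two closed disks along their circles at infinity, on which $P_\wt\uplus P_\bt$ restricts to $P_\wt$ and $P_\bt$ on the two pieces. Both boundary dynamics on $\EC$ are conjugate to $z\mapsto z^d$ on $S^1$, so the Thurston equivalences $F_\wt\sim P_\wt$ and $F_\bt\sim P_\bt$ may be chosen to agree, up to isotopy rel.\ postcritical set, on a common collar of $\EC$. Pasting them across $\EC$ and composing with Step~1 yields the desired Thurston equivalence between $f$ and $P_\wt\uplus P_\bt$. The hardest part is the obstruction analysis: one must carefully track how a potential Thurston obstruction of $F_\wt$ or $F_\bt$ descends to one of $\tilde f$, taking into account that although $\post(F_\wt)$ differs from $\post(\tilde f)$ only at the added totally invariant fixed point, curves essential in one sphere-minus-postcritical-set may become peripheral in the other; a secondary technical point is verifying that the two hemispherical Thurston equivalences indeed glue across $\EC$ into a single global equivalence.
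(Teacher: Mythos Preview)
Your outline follows essentially the same route as the paper: make the equator invariant by composing with the isotopy, split into two hemispheres, collapse/cap to get Thurston polynomials, rule out obstructions by transferring them back to $f$, and glue the two Thurston equivalences along the equator. A few points where the paper's execution differs from yours are worth noting. First, the paper begins by invoking Corollary~\ref{cor:hyp_no_equator} to conclude that $f$ has hyperbolic orbifold; you need this too, since otherwise ``would obstruct the rational map $f$'' is not a contradiction. Second, for the obstruction step the paper uses the shortcut that an obstructed Thurston polynomial necessarily carries a L\'evy cycle, and a L\'evy cycle transfers to $f$ without any eigenvalue bookkeeping or peripherality worries; this sidesteps exactly the difficulty you flag as ``the hardest part.'' Third, what you call ``a secondary technical point''---arranging the two hemispherical Thurston equivalences to match on a collar of $\EC$---is in fact where most of the work lies, and the paper isolates it as a separate Claim with its own proof; the upshot is that one may only conclude $f$ is equivalent to the formal mating of $P_\wt$ with a root-of-unity conjugate $c^{-1}P_\bt(cz)$, which is still a polynomial, so the theorem holds.
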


\begin{proof}
  Let $f$ be a postcritically finite rational map, that has an
  equator. From Corollary~\ref{cor:hyp_no_equator} it follows that $f$
  has hyperbolic orbifold, which we assume from now on. 

  % Assume first that $f$ has parabolic orbifold. Then $f$ has at most
  % four postcritical points. If $\#\post(f)\leq 3$ the map $f$ has no
  % equator by Proposition~\ref{prop:d23_equator}. 

  % If $\#\post(f)=4$ it follows that $f$ is a Latt\`{e}s map. It
  % follows from Proposition~\ref{prop:Lattes_equator} that $f$ has no
  % equator. 

  \smallskip
%  Thus we assume from now on that $f$ has hyperbolic orbifold. 
  Let $\EC$ be an equator for $f$, and $\widetilde{\EC}:=
  f^{-1}(\EC)$. Let $\widetilde{U}_\wt,\widetilde{U}_\bt$ be the two
  components of $\CDach\setminus \widetilde{\EC}$. The map $f$ maps
  $\widetilde{U}_\wt$, as well as $\widetilde{U}_\bt$, (properly) to one
  component of $\CDach \setminus \EC$. Thus we denote the two
  components $U_\wt,U_\bt$ of $\CDach \setminus \EC$ such that
  $f(\widetilde{U}_\wt) = U_\wt$, $f(\widetilde{U}_\bt)= U_\bt$.  

  \smallskip
  Let $H\colon \CDach \times [0,1]$ be an isotopy rel.\ $\post(f)$
  that deforms $\EC$ to $\widetilde{\EC}$, i.e., $H_0= \id_{\CDach}$ and
  $H_1(\EC)= \widetilde{\EC}$. Then $\widetilde{\EC}$ is (forward and
  backward) invariant for
  the map $\widetilde{f} \colon
  \CDach \to \CDach$ given by $\widetilde{f}= H_1 \circ f$, i.e.,
  $\widetilde{f}(\widetilde{\EC})=\widetilde{\EC}=
  \widetilde{f}^{-1}(\widetilde{\EC})$. We can choose the 
  isotopy $H$ such that $\widetilde{f}\colon{\widetilde{\EC}} \to
  \widetilde{\EC}$ is 
  topologically conjugate to $z^d\colon S^1 \to S^1$ (where $d=\deg
  \widetilde{f}= \deg f$). We will assume this from now on. 
   
  Since $\widetilde{\EC}$ is
  orientation-preserving isotopic (rel.\ $\post(f)$) to $\EC$, it
  follows that $H_1({U}_\wt)= \widetilde{U}_\wt$ and $H_1({U}_\bt)=
  \widetilde{U}_\bt$.  
  
  \smallskip
  On $\widetilde{X}_\wt:= \widetilde{U}_\wt \cup \widetilde{\EC}$ we
  consider the equivalence relation which has the following
  equivalence classes. The set $\widetilde{\EC}$ is an
  equivalence class and every singleton $\{x\}\subset U_\wt$ is an
  equivalence class. 

  Consider the quotient space $S^2_\wt:=
  \clos\widetilde{U}_\wt/\widetilde{\EC}$. Clearly $S^2_\wt$ is a
  topological sphere.  

  The map $H_1 \circ f$ maps $\widetilde{\EC}$ to $\widetilde{\EC}$
  and each $x\in \widetilde{U}_\wt$  into $\widetilde{U}_\wt$. Thus this
  map descends naturally to the quotient $S^2_\wt$, i.e., to a map
  $p_\wt\colon S^2_\wt\to S^2_\wt$. Call the point $[\widetilde{\EC}]\in S^2_\wt$
  for convenience $\infty$. We note that $p_\wt(\infty)=
  P_\wt^{-1}(\infty)=\infty$. Clearly $p_\wt$ is a postcritically finite branched
  covering. Thus $p_\wt$ is a Thurston polynomial. 

  \smallskip
  If $p_\wt$ would be obstructed it would have a L\'{e}vy cycle. Then
  $f$ would have a L\'{e}vy cycle as well. Since we assume that $f$ is
  a rational map with hyperbolic orbifold this cannot happen. Thus
  $p_\wt$ is Thurston equivalent to a polynomial $P_\wt$, which we assume to
  be monic and centered. We extend this polynomial to the circle at
  infinity $S^1_\infty= \{\infty \cdot e^{2\pi i\theta} \mid \theta
  \in \R/\Z\}$, this extension is still denoted by $P_\wt \colon
  \overline{\C}_\wt\to \overline{\C}_\wt$. 

  \begin{claim}
    There are homeomorphisms $h_0, h_1\colon \overline{\C}_\wt \to
    \widetilde{X}_\wt$ with the following properties.
    \begin{itemize}
    \item $h_0\circ P_\wt(z) = \widetilde{f}\circ h_1(z)$ for all
      $z\in \overline{\C}_\wt$, i.e., the following diagram commutes
      \begin{equation*}
        \xymatrix{
          \overline{\C}_\wt \ar[r]^{h_1} \ar[d]_{P_\wt}
          &
          \widetilde{X}_\wt \ar[d]^{\widetilde{f}}
          \\
          \overline{\C}_\wt \ar[r]_{h_0} & \widetilde{X}_\wt\,;
        }
      \end{equation*}
    \item $h_0,h_1$ are isotopic rel.\ $\post(P_\wt) \cup S^1_\infty$,
      in particular $h_0=h_1$ on $S^1_{\infty}$. 
    \end{itemize}
  \end{claim}

  \begin{proof}[Proof of Claim]
    We will deform $P_\wt\colon \overline{\C}_\wt\to
    \overline{\C}_\wt$ by an isotopy rel.\ $\post(P_\wt)\cup
    S^1_\infty$ and $\widetilde{f}\colon \widetilde{X}_\wt\to
    \widetilde{X}_\wt$ by an isotopy rel.\ $\post(\widetilde{f}) \cup
    \widetilde{\EC}$. It is enough to prove the statement with these
    deformed maps. 

    More precisely, we can postcompose $P_\wt$ with an isotopy rel.\
    $\post(P_\wt)\cup S^1_\infty$ such that the resulting map is
    topologically conjugate to $\varphi\colon S^1 \times [0,1] \to
    S^1\times[0,1]$, $\varphi(\theta, t) = (d\theta,t)$ in an annulus
    $A$ containing $S^1_\infty\subset \overline{\C}_\wt$. We call this
    deformed map $\widetilde{P}_\wt$.
    The boundary component of $A$ distinct from $S^1_\infty$ is called
    $\CC$. Note that $\widetilde{P}_\wt(\CC)=\CC$. 

    \smallskip
    Since $P_\wt$, hence $\widetilde{P}_\wt$, is Thurston equivalent
    to $p_\wt$, it follows that there are homeomorphisms $k_0, k_1\colon
    \C_\wt\to \widetilde{U}_\wt$ such that $k_0\circ \widetilde{P}_\wt
    = \widetilde{f}\circ k_1$ on $\C_\wt$, that are isotopic rel.\
    $\post(P_\wt)$. 

    Let $\widetilde{\CC}:= k_0(\CC)\subset \widetilde{U}_\wt$ and
    $\widetilde{\CC}^1:= \widetilde{f}^{-1}(\CC)= k_1(\CC)$. We can
    precompose $\widetilde{f}$ with a pseudo-isotopy $I$ rel.\
    $\post(\widetilde{f})\cup \widetilde{\EC}$, so that the resulting
    map $\widetilde{f}\circ I_1$ leaves $\widetilde{\CC}$ invariant,
    and $\widetilde{f}\circ I_1$ is topologically conjugate to
    $\varphi$ as above. Let $\widetilde{A}$ be the annulus between
    $\EC$ and $\widetilde{\CC}$. 

    We define $h_0=k_1$ on $\C_\wt \setminus A$ and $h_1= I_1\circ k_1$
    on $\C_\wt\setminus A$. Since both $\widetilde{P}_\wt$ and
    $\widetilde{f}\circ I_1$ are topologically conjugate to $\varphi$
    on $A$ respectively $\widetilde{A}$, we can
    extend $h_0, h_1$ to $A$ (mapping to $\widetilde{A}$) such that
    the claim holds.  
    % \bigskip
    % Consider a Jordan curve $\CC\subset \C_\wt$ homotopic to
    % $S^1_\infty$ (we may choose $\CC$ to be an equipotential of the
    % Julia set). Let $\CC^1:= P_\wt^{-1}(\CC)$. We can find an isotopy
    % $K\colon \overline{\C}_\wt\times [0,1] \to \overline{\C}_\wt$
    % rel.\ $\post(P_\wt) \cup S^1\infty$ that deforms $\CC$ to
    % $\CC^1$, such that the map $\widetilde{P_\wt}= K_1\circ
    % P_\wt\colon \CC^1 \to \CC^1$ is
    % topologically conjugate to $z^d\colon S^1\to S^1$. If we can show
    % the claim with for the map $\widetilde{P_\wt}$, the claim (for
    % $P_\wt$) follows. 
    % Since $P_\wt$, hence $\widetilde{P}_\wt$, is Thurston equivalent to
    % $p_\wt$ it follows that there are homeomorphisms $k_0, k_1\colon
    % \C_\wt \to \widetilde{U}_\wt$ such that $k_0\circ
    % \widetilde{P}_\wt = \widetilde{f}\circ k_1$ on $\C_\wt$, that are
    % isotopic rel.\ $\post(\widetilde{P}_\wt)$. Let $\widetilde{\CC}=
    % k_0(\CC)$, $\widetilde{\CC}^1= k_1(\CC^1)$.
    % There is a isotopy $I\colon \widetilde{X}_\wt\times
    % [0,1] \to \widetilde{X}_\wt$ rel.\ $\post(\widetilde{f}) \cup
    % \widetilde{\EC}$ that deforms $\widetilde{\CC}$ to $\widetilde{\CC}^1$,
    % such that $I_1\circ \widetilde{f}\colon \widetilde{\CC}^1 \to
    % \widetilde{\CC}^1$ and 
  \end{proof}

  \smallskip
  In the same fashion we define $S^2_\bt:= \clos
  \widetilde{U}_\bt/\widetilde{\EC}$ and the map $p_\bt\colon S^2_\bt\to
  S^2_\bt$. Again this is a Thurston polynomial, which is Thurston
  equivalent to a (monic, centered) polynomial $P_\bt$ (i.e., is not
  obstructed). The analog statement to the claim above holds for
  $P_\bt$. 

  \smallskip
  It is not necessarily true that $\widetilde{f}$ (hence $f$) is
  Thurston equivalent to the formal mating of $P_\wt, P_\bt$. However
  the maps $h_0, h_1$ in the above claim have to map one of the $d-1$
  fixed points of $P_\wt$ on $S^1_\infty\subset \overline{\C}_\wt$ to
  a fixed point of $\widetilde{f}$ on $\EC$. Similarly for
  $P_\bt$. Thus $\widetilde{f}$, hence $f$, is Thurston equivalent to
  the formal mating of $P_\wt$ and $c^{-1}P_\bt(cz)$, where $c=
  e^{2\pi i j/(d-1)}$, for some $j=0, \dots d-2$.

  % \medskip
  % Consider now the formal mating $F\colon S^2_F\to S^2_F$ of $P,Q$. We
  % want to show that $F$ is Thurston equivalent to $f$. More precisely
  % we will show that the map $g:= H_1 \circ f\colon S^2_g \to S^2_g$
  % (which is clearly Thurston equivalent to $f$) is Thurston equivalent
  % to $F$.   

  % The formal mating of $f_1$ with $f_2$ yields the map $H_1\circ
  % f$. Clearly this is Thurston equivalent to
  % the rational map $f$.
\end{proof}

\begin{proof}[Proof of Theorem~\ref{thm:mate_hyp}]
  Let $f\colon \CDach \to \CDach$ be a hyperbolic postcritically
  finite rational map. 

  \smallskip
  Assume $f$ arises as a mating. Let $S^2= \overline{\C}_\wt\uplus
  \overline{\C}_\bt$ be the formal mating sphere with equator
  $S^1:=\{\infty\cdot e^{2\pi i \theta} \mid \theta \in
  [0,2\pi]\}\subset \overline{\C}_\wt \subset S^2$ and $\sim$ be the
  ray-equivalence on $S^2$. Let $H\colon S^2\times I\to S^2$ be
  the pseudo-isotopy realizing $\sim$ according to Moore's
  Theorem. 

  We fix an appropriate $\epsilon>0$ such that for $t\in [1-\epsilon,1]$
  the curve $H(S^1,t)$ does not meet $\post(f)$, i.e., $H(S^1\times
  [1-\epsilon,1])\subset S^2\setminus\post(f)$. Consider the two
  parametrized curves 
  $\gamma_1,\gamma_{1-\epsilon}\colon S^1\to 
  S^2_f$ given by $\gamma_1(\infty\cdot e^{2\pi i \theta}) = H_1(\infty\cdot e^{2\pi i d
    \theta})$, and more generally $\gamma_{t}(\infty\cdot e^{2\pi i \theta}) =
  H_{t}(\infty\cdot e^{2\pi i d \theta})$ for each $t\in [1-\epsilon,1]$
  where $d=\deg f$. Note that they cover the image (at least) $d$-fold. 
  Clearly
  $H\colon S^1\times[1-\epsilon,1]\to S^2$ is a homotopy deforming
  $\gamma_{1-\epsilon}$ to $\gamma_1$. 
  
  Consider now $\widetilde{\gamma}_1\colon S^1\to S^2$ given by
  $\widetilde{\gamma}_1(\infty\cdot e^{2\pi i \theta}):= H_1(e^{2\pi i
    \theta})$. By \eqref{eq:CaraCommDia} $\widetilde{\gamma}_1$ is a
  lift of $\gamma_1$ by 
  $f$, i.e., $f\circ \widetilde{\gamma}(\infty\cdot e^{2\pi i \theta}) =
  \gamma(\infty\cdot e^{2\pi i \theta})$. By the standard lifting theorem
  of homotopies by covering maps it follows 
  that the homotopy $H\colon S^1 \times [1-\epsilon,1]\to S^2$ can
  be lifted by $f$ to a homotopy $\widetilde{H}\colon S^1\times
  [1-\epsilon,1]\to S^2$ with $\widetilde{H}_1=
  \widetilde{\gamma}_1$, i.e., $f\circ \widetilde{H} = H$. Let
  $\widetilde{\gamma}_t\colon S^1\to S^2$ 
  be given by $\widetilde{\gamma}_t(\infty\cdot e^{2\pi i \theta}):=
  \widetilde{H}_t(\infty\cdot e^{2\pi i \theta})$. Thus $f\circ
  \widetilde{\gamma}_t(\infty\cdot e^{2\pi i \theta}) = \gamma_t(\infty\cdot e^{2\pi
      i \theta})$. Thus
  $\widetilde{\gamma}_t$ is a component of $f^{-1}(\gamma_t)$. 
  
  It remains to show that there is no other component, i.e., that
  $\widetilde{\gamma}_t$ is the whole preimage of $\gamma_t$ by
  $f$. This is seen using Lemma \ref{lem:preimages}. Indeed for any
  $\gamma_1(\infty \cdot e^{2\pi i \theta})= H_1(\infty\cdot e^{2\pi i d
    \theta})$ the points $H_1(\infty\cdot e^{2\pi i \theta +
    \frac{k}{d}})$, $k=0,\dots , d-1$ are the $d$ \emph{distinct}
  preimages by $f$. By continuity $\widetilde{\gamma}_t(\infty\cdot e^{2\pi
    i \theta +\frac{k}{d}})$, $k=0,\dots, d-1$ are distinct for $t$
  sufficiently close to $1$. Thus $\widetilde{\gamma}_t$ is the whole
  preimage of $\gamma_t$ by $f$. 

  Since $\CC_t:={\gamma}_t(S^1)$ contains no postcritical point
  $\widetilde{\CC}_t:=\widetilde{\gamma}_t(S^1)$ is a simple curve. 
  Finally the concatenation of $H$ and $\widetilde{H}$ deforms $\CC_t$
  to $\widetilde{\CC}_t$, thus they are homotopic
  rel. $\post(f)$. Clearly each component of $S^2 \setminus \CC_t$
  contains at least a postcritical point. Thus $\CC_t,
  \widetilde{\CC}_t$ are orientation-preserving isotopic rel.\
  $\post(f)$ by Theorem~\ref{thm:homot_isotop}.  

  \smallskip
  Assume now that $f$ has an equator. Then $f$ is Thurston equivalent
  to the formal mating of two polynomials $P_\wt,P_\bt$ by
  Theorem~\ref{thm:equator_mating}. From the Rees-Shishikura theorem, i.e.,
  Theorem~\ref{thm:formal_top}, it follows that $f$ is topologically
  conjugate to (the topological mating) $P_\wt\mate P_\bt$. 
\end{proof}

Theorem~\ref{thm:mate_hyp} together with
Proposition~\ref{prop:d23_equator} immediately yields the following.

\begin{theorem}
  \label{thm:hyp_post3}
  Let $f\colon \CDach \to \CDach$ be a hyperbolic, postcritically
  finite, rational map, with $\#\post(f)=3$, that is not a
  polynomial. Then $f$ does not arise as a mating.
\end{theorem}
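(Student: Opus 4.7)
The plan is to combine the two results the author has already established in the preceding sections, namely Theorem~\ref{thm:mate_hyp} and Proposition~\ref{prop:d23_equator}. Since $f$ is hyperbolic and postcritically finite, Theorem~\ref{thm:mate_hyp} gives an equivalence: $f$ arises as a mating if and only if $f$ admits an equator $\EC \subset \CDach \setminus \post(f)$. The strategy is therefore to argue that under the given hypotheses $f$ has no equator, and conclude contrapositively.

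To apply Proposition~\ref{prop:d23_equator}, I need to check that $f$, viewed as a Thurston map, satisfies its hypotheses: namely that $\#\post(f) \leq 3$ and that $f$ is not a Thurston polynomial. The first is given. For the second, observe that being a Thurston polynomial means possessing a totally invariant point $p$, i.e., a point with $f(p) = f^{-1}(p) = \{p\}$. For a rational map such a point is automatically a critical fixed point of local degree $\deg f$, and after a M\"{o}bius change of coordinates sending $p$ to $\infty$, the map becomes a polynomial in the classical sense. Conversely every polynomial has $\infty$ as a totally invariant point. Hence the hypothesis that $f$ is not a polynomial is equivalent to the hypothesis that $f$ is not a Thurston polynomial.

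Having verified the hypotheses, Proposition~\ref{prop:d23_equator} yields that $f$ has no equator. Theorem~\ref{thm:mate_hyp} then forces that $f$ does not arise as a mating, which is the claim.

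There is essentially no obstacle; the theorem is an immediate consequence of the two cited results, with the only minor verification being the translation between the classical notion of a polynomial and the Thurston-polynomial notion (existence of a totally invariant point), which is a standard and trivial observation.
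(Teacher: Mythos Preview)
Your proposal is correct and matches the paper's own argument exactly: the paper states that Theorem~\ref{thm:mate_hyp} together with Proposition~\ref{prop:d23_equator} immediately yields the result. Your added remark that, for a rational map, ``not a polynomial'' is equivalent to ``not a Thurston polynomial'' (via conjugating a totally invariant point to $\infty$) is the only verification needed and is indeed trivial.
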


In the case when $f$ is a polynomial it arises trivially as the mating
of itself with $z^d$ (where $d=\deg f$). Note that each iterate $f^n$
has the same postcritical set. Thus in the case of the previous
theorem no iterate $f^n$ arises as mating. This of course is in
contrast to the results from \cite{inv_Peano} and
\cite{exp_quotients}.

% \begin{prop}
%   \label{prop:hyp_mate_equator}
%   Let $f$ be a hyperbolic postcritically finite rational map that has
%   an equator. Then $f$ arises as a mating.
% \end{prop}

% \begin{proof}
%   We use the same setup and notation as in the proof of the previous
%   theorem. 
  
%   Since $f$ is
%   a hyperbolic rational map, each critical point of $f$ is eventually
%   mapped to a critical periodic cycle. The same holds for $f_1,f_2$. Thus
%   $f_1,f_2$ are hyperbolic polynomials, i.e., there are no critical points
%   in the Julia set of $f_1$.  
  
%   Since both polynomials are hyperbolic the formal mating given by
%   $H_1\circ f$ agrees
%   with the degenerate mating. 
%   From the Rees-Shishikura Theorem it follows that the topological
%   mating of $f_1,f_2$ is topological conjugate to $f$. Thus $f$ arises
%   as a mating as desired.  
% \end{proof}

\section{An Example}
\label{sec:an-example}

We provide an example of a rational map to be able to illustrate the
following. We give several descriptions (naturally equivalent) of the
same map.

\smallskip
Consider two equilateral triangles. We glue them together along their
boundary. This yields a topological sphere denoted by $\Delta$. We
color one side (i.e., 
one of the equilateral triangles) white and call it $T_\wt$, the other
one is colored black and called $T_\bt$. We call $T_\wt, T_\bt$ the
\defn{$0$-triangles}. 

Glue again two equilateral triangles (of the same size as before)
together to form a topological sphere $\Delta^1$ as before. Each face
is now divided into four equilateral triangles of half the
side-length. We color these small triangles in a \emph{checkerboard
  pattern} black and white. This means that two small triangles which
share an edge have different color. These small triangles are called
\defn{$1$-triangles}. 

Consider a small white triangle $T_1\subset \Delta^1$. The map $f$ is
given on $T_1$ as follows.
Scale $T_1$ by the factor $2$ and map it to the big white
triangle in $\Delta$. 
Consider now a small black triangle $T_2\subset\Delta^1$ that intersects
$T_1$  in an edge.
The map $f$ can be extended continuously to $T_2$ by scaling by the
factor $2$ and mapping it to the black triangle in $\Delta$. 

\begin{figure}
  \centering
  \begin{overpic}
    [width=11cm, %grid, 
    tics =20]{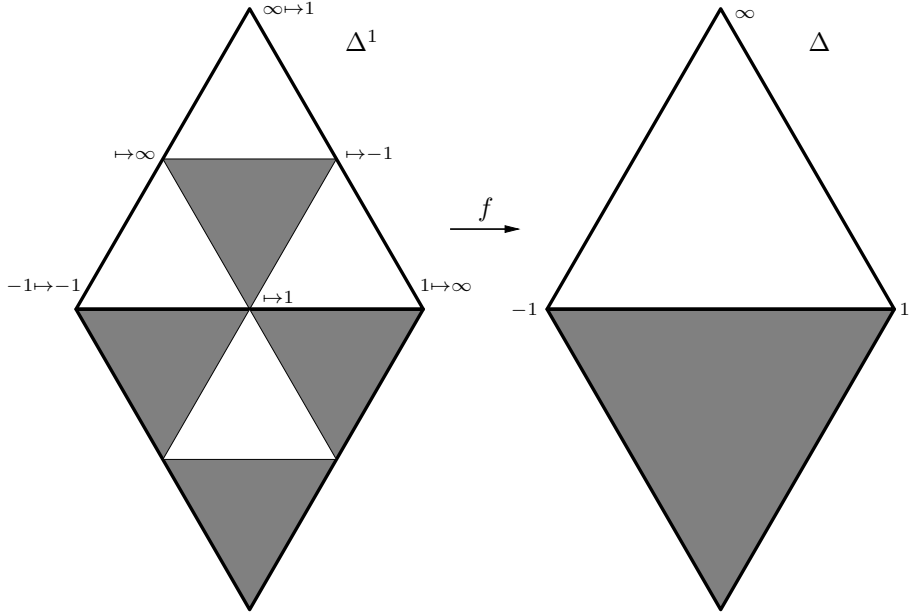}
    \put(89,68){$\Delta$}
    \put(33,68){$\Delta^1$}
    \put(-8,39){$\scriptstyle{-1\mapsto -1}$}
    \put(42,39){$\scriptstyle{1\mapsto \infty}$}
    \put(23,37.5){$\scriptstyle{\mapsto 1}$}
    \put(23,72.5){$\scriptstyle{\infty\mapsto 1}$}
    \put(33,55){$\scriptstyle{\mapsto -1}$}
    \put(5,55){$\scriptstyle{\mapsto \infty}$}
    \put(53,36){$\scriptstyle{-1}$}
    \put(100,36){$\scriptstyle{1}$}
    \put(80,72){$\scriptstyle{\infty}$}
    \put(49,48){$f$}
  \end{overpic}
  \caption{A Latt\`{e}s map with signature $(3,3,3)$.}
  \label{fig:1}
\end{figure}

Continuing in this fashion we obtain a map $\Delta^1\to
\Delta$. Identifying $\Delta^1$ with $\Delta$ we obtain a branched
covering map from a topological sphere to itself. Note that the
critical points (i.e., the points where $f$ is not locally injective)
are the vertices of the small triangles. These are mapped to the
vertices of $\Delta$, each such vertex is mapped to (possibly) another
vertex of $\Delta$. Thus the map is postcritically finite.

\smallskip
The map may not look as a rational map to many readers. To the reader
familiar with Thurston's classification of rational maps among Thurston
maps (see \cite{DouHubThurs}) we remark that $f$ has only $3$
postcritical points, thus it has no Thurston obstruction (the
orbifold of $f$ though is parabolic).

\smallskip
There is however a standard way to view every polyhedral surface as a
Riemann surface (see \cite{Beardon_Riemann}). We outline the
construction in the case at hand.

Consider first a point $p\in \Delta$ that lies in the interior of
either one of the two triangles from which $\Delta$ is built, say
$p\in \inte T_\wt$. Then (any) orientation-preserving, isometric map
from $\inte T_\wt$ to (the interior of) an equilateral triangle in the
plane is a chart. 

Assume now that $p$ lies on a common edge $E$ of $T_\wt$
and $T_\bt$, but is not a vertex of $\Delta$. Then the
map to (the interior of) the union of
two equilateral triangles in the plane is a chart. Here of course we
demand that this chart is orientation preserving, and the interiors of
$T_\wt,T_\bt, E$ are mapped isometrically. 

Finally if $p$ is a vertex of $\Delta$ we note that the (Euclidean)
total angle around $p$ is $2\pi/3$. Thus we can map a small open
neighborhood of $p$ in $\Delta$ essentially by the map $z\mapsto z^3$
to a planar domain. 

Changes of coordinates are conformal, thus we have defined a Riemann
surface. Clearly this is compact and simply connected, thus
conformally equivalent to the Riemann sphere $\CDach$. Note that the
map $f\colon \Delta\to \Delta$ is holomorphic with respect to these
charts. Thus changing the coordinate system yields a holomorphic,
hence a rational map $f\colon \CDach\to \CDach$. 

\smallskip
A slightly different way to construct the map may be thought of as
constructing the uniformizing map above explicitly.  
Namely map the equilateral triangle $T_\wt$ to the upper half-plane
conformally, i.e., by a Riemann map $\varphi$ normalized such that the three
vertices are mapped to $-1,1,\infty$. Recall that $T_\wt$ is
subdivided into four triangles of half the
side-length. Consider one (of the three) such triangles which is
colored white $T_1$. We consider the image in the upper half-plane,
i.e., $T'_1:= \varphi(T_1)\subset \clos\Ho$. 

We map $T'_1$ conformally to the upper half-plane, such that the
images of the vertices by $\varphi$ are mapped to
$-1,1,\infty$. Figure \ref{fig:1} indicates which  vertices are mapped
to which of the points $-1,1,\infty$. 

Consider now the black $1$-triangle $T_2\subset T_\wt$. It shares
an edge $E$ with $T_1$. Its image is $T'_2:= \varphi(T_2)$. 

Note that $T_2$ is the reflection of $T_1$ along $E$. Thus $T'_2$ is
the \defn{conformal reflection} of $T'_1$ along $E':= \varphi(E)$. By
the Schwarz reflection principle it follows that the map $f\colon T'_1
\to \clos \Ho$ extends conformally to $T'_2$ and maps this set to the
lower half-plane (vertices are mapped to $-1,1,\infty$). 

Continuing in this fashion we construct a map $f\colon \clos \Ho=
\varphi(T_\wt)\to \CDach$. There are now two ways to proceed. Either
we map $T_\bt$ conformally to the lower half-plane and proceed as
before. Alternatively we note that the map $f\colon \clos \Ho=
\varphi(T_\wt)\to \CDach$ maps the extended real line $\RDach=
\R\cup\{\infty\}$ to itself. Thus we can use the Schwarz reflection
principle again to extend $f$ to the lower half-plane via $f(z):=
\overline{f(\bar{z})}$. 

\smallskip
A third description to construct the map $f$ is as follows.  
Consider the triangular lattice $\Lambda:= \Z\oplus \omega\Z$, where
$\omega= \exp(\pi i/3)$. Map the equilateral triangle with vertices
$0,1, \omega$ by a Riemann map $\wp$ to the upper half-plane, such
that $\wp(0)=-1, \wp(1)=1, \wp(\omega)=\infty$. This map extends by
reflection to a holomorphic map $\wp\colon \C\to \CDach$. It is the
\defn{Weierstra\ss\ $\wp$-function} to the lattice $\Lambda$ (slightly
differently normalized than usual). Consider now the map $z\mapsto 2z$
on $\C$. It is straightforward to check that if $z,w\in \C$ are mapped
by $\wp$ to the same point, then the same is true for $2z,2w$. Thus
there is a well defined map $f\colon \CDach\to \CDach$ (which is the
same map as before) such that the diagram
\begin{equation*}
  \xymatrix{
    \C\ar[r]^{z\mapsto 2z} \ar[d]_{\wp}
    &
    \C\ar[d]^{\wp}
    \\
    \CDach \ar[r]_f & \CDach
  }
\end{equation*}
commutes. 
% into which $T_\wt$ is subdivided are mapped by this
% Riemann map to the upper half-plane as well. Consider one such image
% $T'_1$. We map 
% In this new
% coordinate system the map $f\colon \CDach \to \CDach$ 
% of the two triangles from which $\Delta$ is built. Then the
% ``obvious'' map, more precisely the orientation preserving isometry
% that  that 
% Consider the domain $U$ (that is the interior) formed by two equilateral
% triangles (of the same sidelength) which share a side $s$ (as in the
% right of Figure \ref{fig:1}. For every point
% $p\in \Delta$ distinct from the three vertices the 
% For every point on $\Delta$ 
% interior of two 
% A Riemann surface is a manifold of
% $2$ (real) dimensions, where the changes of coordinates are conformal.
Finally we note that the map $f$ is given by
\begin{equation*}
  f=\frac{2(z+1)(z-3)^3}{(z-1)(z+3)^3} -1.
\end{equation*}

\section{A sufficient criterion for mating}
\label{sec:anoth-suff-crit}

In Section~\ref{sec:hyperb-rati-maps} we saw a necessary and
sufficient criterion for a hyperbolic postcritically finite rational
map to arise as a mating. 

Here we present another sufficient criterion for a map to arise as a
mating from \cite{inv_Peano} and \cite{exp_quotients}. 

\smallskip
Recall that for an equator we required the existence of a Jordan curve
in $\CDach\setminus \post(f)$ that can be deformed by a isotopy rel.\
$\post(f)$ orientation-preserving to its preimage. 

The existence of an equator is the right
condition to check whether $f$ arises as a mating for \emph{hyperbolic
maps}. It is not the right condition however, for maps that are not
hyperbolic. Here we concentrate on the case that the map $f$ is as far away as
possible from being hyperbolic, i.e., on the case where all critical
points, hence all postcritical points are in the Julia set.  

\smallskip
Whereas in the hyperbolic case, we considered Jordan curves
\emph{avoiding} the postcritical set, we consider here Jordan curves
\emph{containing} the postcritical set. So let $\CC\subset \CDach$ be a
Jordan curve with $\post(f)\subset \CC$. The postcritical points
divide $\CC$ into closed Jordan arcs, which are called
\defn{$0$-edges}. 

We consider the preimage
\begin{equation*}
   \CC^1:=f^{-1}(\CC). 
\end{equation*}
This set can be naturally viewed
as a graph embedded in the sphere. Namely the set $\V^1:=f^{-1}(\post(f))$
is the set of vertices of this graph, the closure of one component of 
$f^{-1}(\CC)\setminus \V^1$ is an edge (called a \defn{$1$-edge}) of
this graph. All edges arise in this form. 
We call each point $p\in \V^1$ a
\defn{$1$-vertex}. Note that every postcritical point, as well as
every critical point, is a $1$-vertex.

 There may be 
multiple edges connecting two vertices, but there can be no loops.
For each $1$-edge $E^1\subset f^{-1}(\CC)$ there is a $0$-edge
$E^0\subset \CC$ such that the map $f\colon E^1 \to E^0$ is a homeomorphism.

Note that each critical point $c$ is incident to $2\deg_f (c)$
$1$-edges. In particular $\CC^1$ is not a Jordan curve, thus cannot be
isotopic to $\CC$.

Roughly speaking we demand that there is a \defn{pseudo-isotopy} rel.\
$\post(f)$ that deforms $\CC$ to $f^{-1}(\CC)$. 

\smallskip
To complete the picture, we let $X^0_\wt, X^0_\bt$ be the closures of the
two components of $\CDach\setminus \CC$. Then $X^0_\wt$ will be colored
white, and $X^0_\bt$ is colored black. They are called the two
\emph{$0$-tiles}. We orient $\CC$, so that it is positively oriented
as boundary of the white $0$-tile $X^0_\wt$. 

The closure of each component of $\CDach\setminus \CC^1$ is called a
\emph{$1$-tile}. It is not very hard to show that each $1$-tile $X$ is mapped
by $f$ homeomorphically to either $X^0_\wt$ or $X^0_\bt$, see
\cite[Chapter 4.3]{THEbook}. In the first case we color $X$ white, in
the second black. 

% \begin{definition}[Pseudo-isotopies]
%   \label{def:degen-isot}
%   A homotopy 
%   \begin{equation*}
%     H\colon S^2\times [0,1]\to S^2
%   \end{equation*}
%   is called a \defn{pseudo-isotopy} if it is an isotopy on $S^2\times
%   [0,1)$. We always require that $H(x,0)= x$ on $S^2$. 
%   If $H(\cdot,t)$ is 
%   constant on a set $A\subset S^2$ it is an
%   \defn{isotopy rel.\ 
%     $A$}; alternatively we then say that $H$ is \defn{supported} on
%     $S^2\setminus A$. We interchangeably write $H_t(x)=H(x,t)$ to
%     unclutter notation.    
% \end{definition}

Recall the definition of a pseudo-isotopy from
Definition~\ref{def:pseudo-isotopy}. 
There are many ``bad'' ways in which $\CC$ be deformed by a pseudo-isotopy
to $f^{-1}(\CC)$. For example a whole interval or a Cantor set may be
deformed to a postcritical point. Also some arc $A\subset\CC$ may be
deformed to some edge in $f^{-1}(\CC)$ in a highly non-trivial
fashion. 

\begin{definition}
  \label{def:elementary_pseudo}
  An 
  \defn{elementary,
    orientation-preserving, pseudo-isotopic} deformation of $\CC$ to $\CC^1=f^{-1}(\CC)$  
  rel.\ $\post(f)$ is a pseudo-isotopy $H\colon S^2
  \times [0,1]$ rel.\ $\post(f)$ (with $H_0= \id_{S^2}$) such that
  \begin{enumerate}
  \item
    \label{item:H1}
    $H_1(\CC)=\CC^1$;
  \item 
    \label{item:H2}
    the set of points $w\in \CC$ such that $H_1(w)$ is a
    $1$-vertex is \emph{finite}. The set of all 
    such points is denoted by $\W:= (H_1)^{-1}(\V^1)\cap \CC$. Note
    that $\post(f)\subset \W$;
  \item 
    \label{item:H3}
    restricted to $\CC\setminus \W$ the homotopy $H$ is an isotopy,
    i.e.,
    \begin{equation*}
      H_1\colon \CC\setminus \W \to \CC^1\setminus \V^1 \text{ is a
        \emph{homeomorphism}.} 
    \end{equation*}
    \setcounter{mylistnum}{\value{enumi}}
  \end{enumerate}

  This means there is a bijection between $1$-edges and
  closures of components of $\CC\setminus \W$. Furthermore if $E^1\subset
  \CC^1$ is a $1$-edge and $A\subset \CC$ is the corresponding arc
  (i.e., closure of a component of $\CC\setminus \W$), then $H_1\colon A \to E^1$
  is a homeomorphism.
 
  % the points $\W\subset \CC$ divide $\CC$ into \emph{closed
  %   arcs}. For each such closed arc $A\subset \CC$ there is exactly
  % one $1$-edge $E^1\subset \CC^1$, conversely for each $1$-edge
  % $E^1\subset \CC^1$ there is exactly one such arc $A\subset \CC$ such
  % that
  % \begin{equation*}
      
  % \end{equation*}
  % is a \emph{homeomorphism};

  \begin{enumerate}
    \setcounter{enumi}{\value{mylistnum}}
  \item 
    \label{item:H4}
    we assign an \emph{orientation} to $\CC$. For each $1$-edge
    $E^1\subset \CC^1$ there are two ways to map it homeomorphically
    to (a part of) $\CC$. 
    % component of
    % $\CC^1\setminus \V^1$ (i.e., the interior of a $1$-edge) there are 
    % there are two ways to map it homeomorphically
    % to a component of $\CC\setminus \W$. 
    Namely by
    \begin{equation*}
      H_1\colon A \to E^1 \quad \text{and by}
      \quad f\colon E^1 \to E^0.
    \end{equation*}
    Here $A\subset \CC$ is a (closed) arc corresponding to $E^1$ as above, $E^0\subset
    \CC$ is a $0$-edge. 
    We demand that $H$ deforms $\CC$ \emph{orientation-preserving} to
    $\CC^1$, meaning that the orientations on $E^1$ induced by the two
    homeomorphisms above agree for each $1$-edge. 
  \end{enumerate}
  If there is a pseudo-isotopy $H$ for $\CC$ as above, we say that $f$
  has a \defn{pseudo-equator}. 
\end{definition}

\begin{remark}
  \label{rem:preH}
  Let the pseudo-isotopy $H\colon S^2 \times [0,1]\to S^2$ be as above.
  Then for any point $p\in \CC^1\setminus \V^1$ there is exactly one
  point $q\in \CC$ that is mapped by $H_1$ to $p$. For a point $c\in
  \V^1$ there are exactly $\deg_f(c)$ points in 
  $\CC$ that are mapped by $H_1$ to $c$. In this sense the curve
  $\CC$ is deformed by $H_1$ to $\CC^1$ as simply as possible.  
\end{remark}

% \begin{remark}
%   Assume $S^2\supset \post(f)$ is a Jordan curve. Let $X^0_\wt, X^0_\bt$
%   be the white and black $0$-tiles induced by $\CC$. The $1$-tiles are
%   colored white and black according to whether they are mapped by $f$
%   to $X^0_\wt$ or $X^0_\bt$. 

%   Let $H\colon S^2\times [0,1] \to S^2$ be a pseudo-isotopy 
%   satisfying \eqref{item:H1},\eqref{item:H2}, and \eqref{item:H3} of
%   Definition~\ref{def:elementary_pseudo}. Consider points that are not
%   deformed to $\CC$, i.e., $x_0,y_0\in S^2\setminus (H_1)^{-1}(\CC)$. 

%   It can be shown that $x,y$ are contained in the the same $0$-tile if
%   and only if $x_1:= H_1(x_0), y_0:=H_1(y_0)$ are contained in
%   $1$-tiles $X^1, Y^1$ such that $X^1,Y^1$ have the same
%   color. Furthermore it can be shown that $H$ deforms $\CC$
%   orientation-preserving to $\CC^1= f^{-1}(\CC)$ (i.e., 
%   satisfies \eqref{item:H4}) if and only if $H$ deforms each point in
%   $X^0_\wt\setminus (H_1)^{-1}(\CC)$ to a point in a white $1$-tile (and
%   each point in $X^0_\bt\setminus (H_1)^{-1}(\CC)$ to a point in a black
%   $1$-tile). 
% \end{remark}

The following is proved in \cite{inv_Peano} and \cite{exp_quotients}. 

\begin{theorem}
  \label{thm:mating_suff}
  Let $f\colon \CDach \to \CDach$ be a postcritically finite rational
  map with Julia set $\J(f)= \CDach$. Assume $f$ has a pseudo-equator
  as in Definition \ref{def:elementary_pseudo}. Then $f$ arises as
  a (topological) mating of two (postcritically finite, monic)
  polynomials $P_\wt,P_\bt$.    
\end{theorem}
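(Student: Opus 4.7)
The plan is to extract from the pseudo-equator two explicit polynomials $P_\wt, P_\bt$ via their critical portraits, to construct by iteration of $H$ a pseudo-isotopy of the whole sphere whose end realizes an equivalence relation of Moore-type, and finally to verify that this equivalence relation agrees with the ray-equivalence of the formal mating of $P_\wt$ and $P_\bt$. For the first step I would fix an orientation-preserving parametrization $\gamma\colon \R/\Z \to \CC$ starting at a chosen postcritical point. By property (\ref{item:H3}) of Definition~\ref{def:elementary_pseudo}, the map $H_1 \circ \gamma$ is injective off the finite set $\gamma^{-1}(\W)$, and over each critical point $c$ of $f$ it has exactly $2\deg_f(c)$ preimages because $c$ is incident to that many $1$-edges. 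The checkerboard coloring of $1$-tiles partitions these $2\deg_f(c)$ angles alternately into $\deg_f(c)$ ``white'' and $\deg_f(c)$ ``black'' angles. Collecting the white angles over all critical points and pulling them back by the $d$-fold map $t\mapsto dt$ on $\R/\Z$ yields a formal critical portrait, and analogously in the black color. By the Bielefeld--Fisher--Hubbard / Poirier theorem recalled in Section~\ref{sec:critical-portraits}, each portrait is realized by a unique monic postcritically finite polynomial, which I label $P_\wt$ and $P_\bt$.

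Next I would iterate the pseudo-isotopy. The orientation-preserving condition (\ref{item:H4}) allows me to lift $H$ under $f$ to a pseudo-isotopy $H^{(1)}$ rel.\ $f^{-1}(\post(f))$ deforming $\CC^1$ to $\CC^2$, and inductively $H^{(n)}$ deforming $\CC^n$ to $\CC^{n+1}$. Concatenating these on the time intervals $[1-2^{-n}, 1-2^{-n-1}]$ produces a map $\widehat{H}\colon \CDach \times [0,1) \to \CDach$. Since $\J(f)=\CDach$ and $f$ is postcritically finite, $f$ is expanding with respect to a visual metric adapted to the tile decompositions $\CC^n$, so the mesh of $\CC^n$ tends to zero. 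This forces $\widehat{H}$ to extend continuously to $t=1$, and by Lemma~\ref{lem:pseudo-equiv} its end realizes a Moore-type equivalence relation $\approx$ on $\CDach$, so that $\CDach/\!\approx$ is again homeomorphic to $S^2$ by Moore's theorem.

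The main obstacle, and the technical heart of the argument, is the identification of $\CDach/\!\approx$ equivariantly with the topological mating $P_\wt \mate P_\bt$. I would build a continuous surjection $\Phi\colon \overline{\C}_\wt \uplus \overline{\C}_\bt \to \CDach$ that sends the equator of the formal mating sphere onto $\CC$ via $\gamma$ and extends over either half using the Carath\'{e}odory semi-conjugacies $\sigma_\wt, \sigma_\bt$ together with the nested tile decomposition by the $\CC^n$. The crucial verification is that two external rays $R(\theta)\subset \overline{\C}_\wt$ and $R(\theta')\subset \overline{\C}_\bt$ are identified by the ray-equivalence $\sim$ exactly when their endpoints $\gamma(\theta)$ and $\gamma(-\theta')$ on $\CC$ are collapsed to a common point by $\widehat{H}_1$. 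This rests on showing that the critical portraits extracted in the first step exactly govern how the finite pinching set $\W$ spreads under pullback, and hence how $n$-edges are glued to form the Julia sets of $P_\wt$ and $P_\bt$ in the limit. Once this identification is in place, $\Phi$ descends to a homeomorphism conjugating $P_\wt \mate P_\bt$ to $f$, yielding the theorem.
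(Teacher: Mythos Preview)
Your overall architecture is the right one and matches what the paper outlines (and what is carried out in detail in the references \cite{inv_Peano}, \cite{exp_quotients}): extract two critical portraits from the pseudo-equator, iterate the pseudo-isotopy by lifting through $f$, pass to a limit using expansion, and identify the resulting Moore-type equivalence relation with the ray-equivalence of the formal mating. The paper itself does not give a proof here but sketches the unmating in Section~\ref{sec:find-polyn-mating}, and your second and third paragraphs are a faithful summary of the intended argument.

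There are, however, two concrete errors in your first step that would prevent the construction from going through as written. First, the count is off: by Remark~\ref{rem:preH} there are exactly $\deg_f(c)$ points of $\CC$ (equivalently of $\W$) deformed by $H_1$ to a critical point $c$, not $2\deg_f(c)$. The number $2\deg_f(c)$ is the number of $1$-\emph{edges} incident to $c$; the arcs of $\CC\setminus\W$ land on these edges in pairs. Consequently your alternating ``white/black'' partition of $2\deg_f(c)$ angles does not exist as stated; the correct dichotomy is the one in Section~\ref{sec:find-polyn-mating}: one looks at the connected components of $[c]\setminus\CC$ (where $[c]=H_1^{-1}(c)$) and records which $0$-tile each lies in.

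Second, and more seriously, the parametrization $\gamma\colon\R/\Z\to\CC$ cannot be chosen arbitrarily. For the angle sets you extract to satisfy axiom (CP~2) of Definition~\ref{def:crit_portrait} (that $\mu$ collapses each $A_j$ to a single point), the length assigned to each $0$-edge must be the Perron--Frobenius eigenvector entry of the edge replacement matrix $M$, as explained in Section~\ref{sec:find-polyn-mating}. With a generic $\gamma$ the ``external angles'' you write down will fail this compatibility and will not be realized by any polynomial via Theorem~\ref{thm:poirier}. Once you repair these two points your sketch aligns with the paper's approach.
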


\begin{remark}
  The theorem above remains true in the case when the map $f$ is not a
  rational map. Namely it holds for \emph{expanding Thurston maps}. The
  statement has to be slightly modified in the presence of periodic
  critical points. 
\end{remark}

\begin{figure}
  \centering
  \begin{overpic}
    [width=11cm, %grid, 
    tics =20]{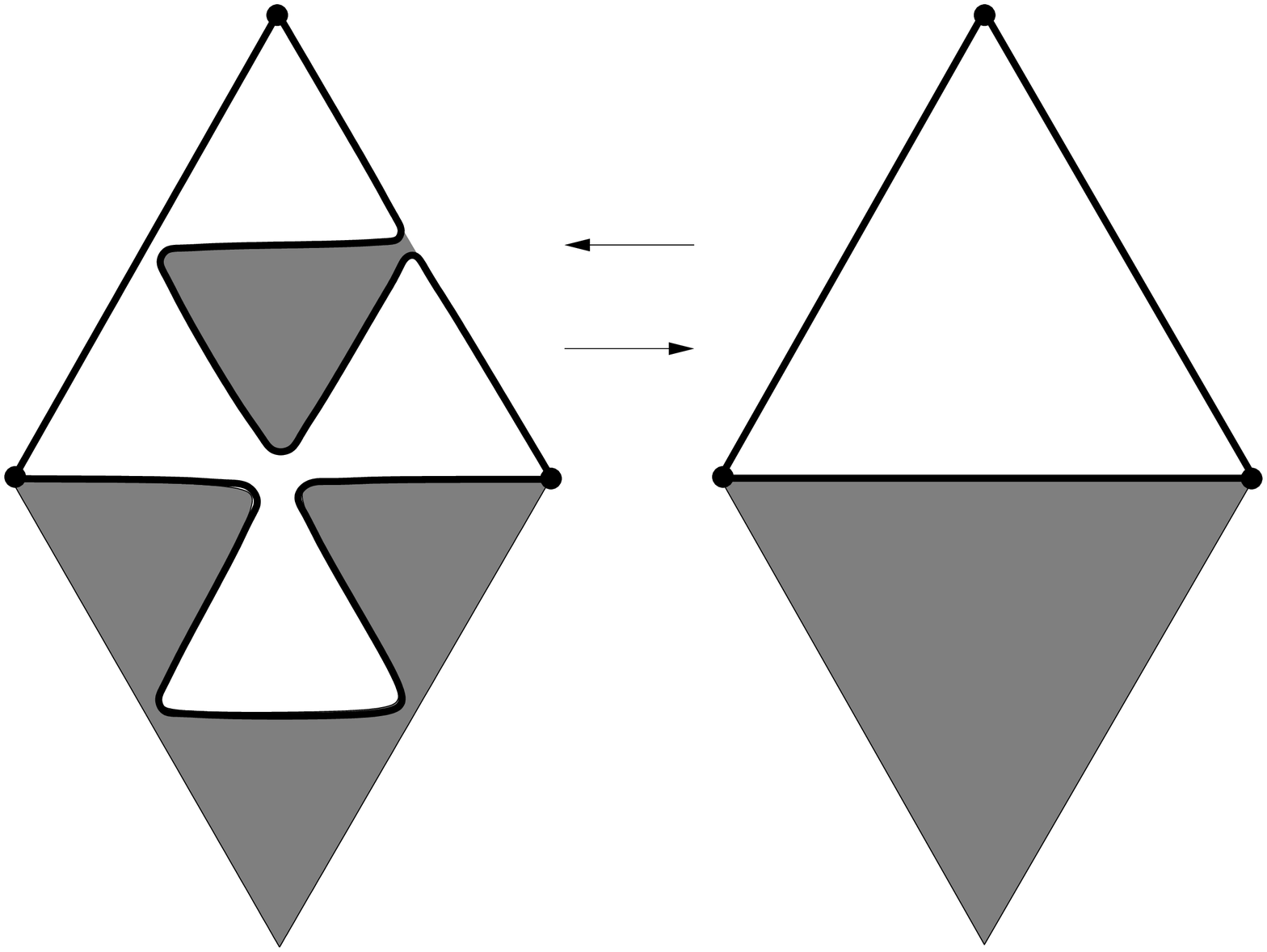}
    \put(49,50){$f$}
    \put(48,58){$H_1$}
    \put(88,61){$\CC$}
    \put(32,61){$\CC^1$}
    \put(-3,39){${\scriptstyle -1}$}
    \put(44,39){${\scriptstyle 1}$}
    \put(53.5,39){${\scriptstyle -1}$}
    \put(100,39){${\scriptstyle 1}$}
    \put(21,76){${\scriptstyle \infty}$}
    \put(77,76){${\scriptstyle \infty}$}
    \put(77,39){${\scriptstyle E_0}$}
    \put(93,50){${\scriptstyle E_1}$}
    \put(66,60){${\scriptstyle E_2}$}
    \put(21,37){${\scriptstyle c_0}$}
    \put(8.5,56){${\scriptstyle c_1}$}
    \put(8.5,18){${\scriptstyle c_1}$}
    \put(33,56){${\scriptstyle c_2}$}
    \put(33,18){${\scriptstyle c_2}$}
  \end{overpic}  
  \caption{Deforming $\CC$ to $\CC^1$.}
  \label{fig:2}
\end{figure}

\begin{example}
  We consider the example from Section~\ref{sec:an-example}. Recall
  that $\post(f)=\{-1,1,\infty\}$. These points are the vertices of
  the pillow in Figure~\ref{fig:1}. 
  We choose $\CC= \widehat{\R}$. In
  the model of the map indicated in Figure~\ref{fig:1}, $\CC$ is the
  common boundary of the two triangles which forms the pillow. The
  preimage $\CC^1:=f^1(\CC)$ is the union of all the edges of the
  small triangles to the left in Figure~\ref{fig:1}. 

  The pseudo-isotopy $H$ that deforms $\CC$ elementary,
  orientation-preserving, to $\CC^1=f^{-1}(\CC)$ is indicated (to the
  left) in Figure~\ref{fig:2}.

\end{example}

\begin{figure}
  \centering
  \begin{overpic}
    [width=12cm, %grid, 
    tics =20]{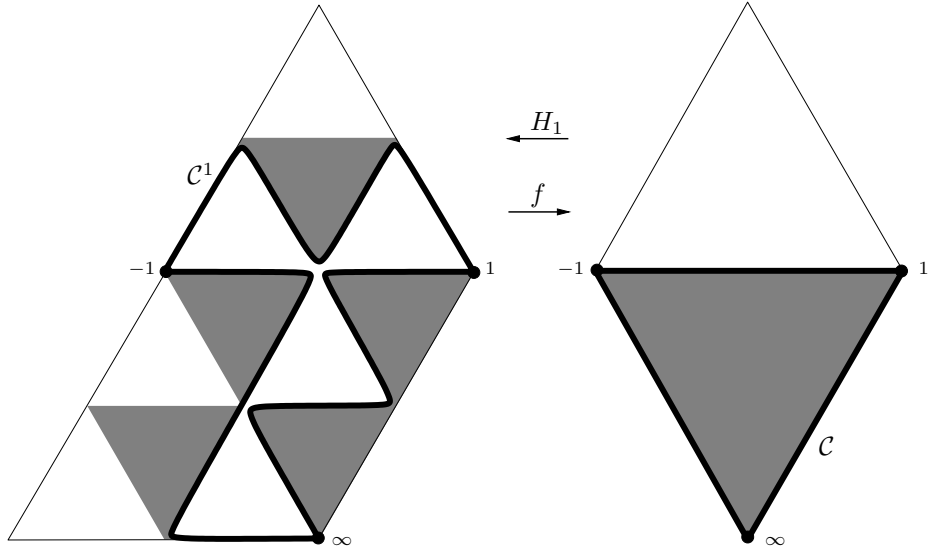}
    \put(58,46){$H_1$}
    \put(58,38){$f$}
    \put(90,10){$\CC$}
    \put(20,40){$\CC^1$}
    \put(101,30){${\scriptstyle 1}$} 
    \put(61,30){${\scriptstyle -1}$} 
    \put(84,0){${\scriptstyle \infty}$} 
    \put(13.5,30){${\scriptstyle -1}$}
    \put(53,30){${\scriptstyle 1}$}
    \put(36,0){${\scriptstyle \infty}$}   
  \end{overpic}  
  \caption{Orientation reversing pseudo-isotopy.}
  \label{fig:3}
\end{figure}

\begin{example}
  \label{ex:orient_reverse}
  We consider again the same example from
  Section~\ref{sec:an-example}. Again $\CC=\widehat{\R}$. We have
  chosen to draw the curve $\CC$ on the right of Figure~\ref{fig:3} as
  the boundary of the black $0$-tile, however. In
  Figure~\ref{fig:3} we show a pseudo-isotopy that deforms $\CC$ to
  $\CC^1$ in an \emph{orientation-reversing} way. By this we mean that
  the shown 
  pseudo-isotopy $H$ satisfies all properties from
  Definition~\ref{def:elementary_pseudo} except \eqref{item:H4}. This
  is seen as follows. If we traverse $\CC$ positively as boundary of
  the white $0$-tile, we go through the postcritical points in the
  cyclic order $-1 \to 1\to \infty\to -1$. However if we traverse
  $\CC^1:= f^{-1}(\CC)$ in the orientation given by $f$ (i.e., each
  $1$-edge is traversed positively as boundary of the white $1$-tile
  in which it is contained), then the postcritical points are
  traversed in the order $-1,\infty, 1$. More precisely, we see that
  the orientations on each $1$-edge $E^1$ induced by $f$ and by $H_1$
  are opposite.
\end{example}

% In the last example we may say that the pseudo-isotopy indicated in
% Figure~\ref{fig:3} deforms $\CC$ in an \emph{orientation reversing}
% way to $\CC^1$.
 If we compare Figure~\ref{fig:2} with
Figure~\ref{fig:3} we see one difference. Namely in the pseudo-isotopy
indicated in Figure~\ref{fig:2} the interior of $\CC$, more precisely
the interior of the white $0$-tile $X^0_\wt$, is deformed to all white
$1$-tiles. On the other hand the pseudo-isotopy indicated in
Figure~\ref{fig:3} deforms the interior of the black $0$-tile to the
white $1$-tiles. This is a general phenomenon.

\begin{lemma}
  \label{lem:Hdeformswb}
  Let $H$ be an elementary, pseudo-isotopic deformation of $\CC$ to
  $\CC^1$, i.e., a pseudo-isotopy rel.\ $\post(f)$ that satisfies
  \eqref{item:H1}, \eqref{item:H2}, and \eqref{item:H3} (but not
  necessarily \eqref{item:H4}) of Definition
  \ref{def:elementary_pseudo}.  Then

  \begin{enumerate}%[\upshape(1)]
  \item 
    \label{item:Horient1}
    Either $H$ deforms $\CC$ to $\CC^1$ orientation-preserving (i.e.,
    satisfies \eqref{item:H4} of Definition~\ref{def:elementary_pseudo}),
    or $H$ deforms $\CC$ to $\CC^1$ \emph{orientation-reserving}. The
    latter means that for any $1$-edge the orientations induced by
    the homeomorphisms 
    $$f\colon E^1\to E^0 \text{ and }H_1\colon A\to E^1$$
    disagree.  Here $E^0\subset \CC$ is a $0$-edge and $A\subset \CC$ is an arc
    as in Definition~\ref{def:elementary_pseudo}~\eqref{item:H3}. 

    % 
    % save counter of enumeration
    \setcounter{mylistnum}{\value{enumi}}
  \end{enumerate}

  Let $x,y\in (H_1)^{-1}(\CC^1)$, i.e., two points that are
  mapped by $H_1$ to the interior of (possibly distinct)
  $1$-tiles. Let $X^0\ni x$, $Y^0\ni y$ be the $0$-tiles containing
  $x,y$, and $X^1\ni H_1(x)$, $Y^1\ni H_1(y)$ be the $1$-tiles
  containing $H_1(x), H_1(y)$. Then

  \begin{enumerate}
    \setcounter{enumi}{\value{mylistnum}}
    
    \item
      \label{item:Horient2}
      \begin{align*}
        X^0, Y^0 \text{ have the same color }
        \\
        \Longleftrightarrow X^1, Y^1 \text{ have the same color.}
      \end{align*}
      
      % \begin{align*}
      % \end{align*}
      
    \item 
      \label{item:Horient3}
      \begin{align*}
        & X^0 \text{ and } X^1 \text{ have the same color}
        \\
        \iff
        &H \text{ is orientation-preserving,}    
      \end{align*}
      i.e., $H$ satisfies \eqref{item:H4} of
      Definition~\ref{def:elementary_pseudo}. 
  \end{enumerate}
\end{lemma}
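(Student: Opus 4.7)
All three assertions follow from a single structural claim: $H_1(X^0_\wt)$ is a union of closed $1$-tiles of one color, so that $H_1(X^0_\bt)$ is the union of $1$-tiles of the opposite color. Granting this, the color of the $0$-tile $X^0\ni x$ depends only on the color of the $1$-tile $X^1\ni H_1(x)$, which gives \eqref{item:Horient2}; and the two possible color-assignments (same color versus swapped color) correspond precisely to the orientation-preserving and orientation-reversing alternatives, giving \eqref{item:Horient1} and \eqref{item:Horient3}.

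To set up the structural claim, I use that $H_1$ is of Moore type (Lemma~\ref{lem:pseudo-equiv}), hence each fiber $H_1^{-1}(q)$ is a continuum that does not separate $S^2$. Two immediate consequences are
\[
H_1(X^0_\wt)\cup H_1(X^0_\bt)=S^2,\qquad H_1(X^0_\wt)\cap H_1(X^0_\bt)=\CC^1;
\]
the union is clear, and for the intersection any common value $p$ off $\CC^1$ would have a connected fiber meeting both $0$-tiles, forcing it to cross $\CC$, so $p\in H_1(\CC)=\CC^1$. Moreover, for each $1$-tile $X^1$ and each $p\in\inte X^1$ the fiber $H_1^{-1}(p)\subset S^2\setminus\CC$ is connected and therefore lies entirely in $\inte X^0_\wt$ or $\inte X^0_\bt$; a sequential argument using the closedness of the $0$-tiles shows this choice is locally constant on the connected set $\inte X^1$, so it defines a single $0$-tile $X^0(X^1)$ with $X^0=X^0(X^1)$ and $Y^0=X^0(Y^1)$.

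The main technical step, which I expect to be the hardest part, is to show that adjacent $1$-tiles $X^1, Y^1$ sharing a $1$-edge $E^1$ satisfy $X^0(X^1)\neq X^0(Y^1)$. For $t<1$, $H_t$ is an orientation-preserving homeomorphism and $H_t(\CC)$ is a Jordan curve converging uniformly to $\CC^1$ as $t\to 1$. I pick a point $p$ in the relative interior of $E^1$ and points $r_X\in\inte X^1$, $r_Y\in\inte Y^1$ inside a small disk $D$ around $p$ on opposite sides of the local arc $\CC^1\cap D$. For $t$ close to $1$, $H_t(\CC)\cap D$ is a simple arc separating $r_X$ from $r_Y$ inside $D$; by the Jordan curve theorem applied to $H_t(\CC)$ this local separation is also the global one, so $r_X, r_Y$ lie in opposite components $H_t(X^0_\wt), H_t(X^0_\bt)$ of $S^2\setminus H_t(\CC)$. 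Hence $H_t^{-1}(r_X), H_t^{-1}(r_Y)$ lie in opposite $0$-tiles, and taking limit points as $t\to 1$ (they accumulate in the connected fibers $H_1^{-1}(r_X)\subset X^0(X^1)$ and $H_1^{-1}(r_Y)\subset X^0(Y^1)$) forces $X^0(X^1)\neq X^0(Y^1)$.

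The assignment $X^1\mapsto X^0(X^1)$ therefore reverses the checkerboard adjacency of $1$-tiles, so it is either color-preserving (white $1$-tile $\mapsto X^0_\wt$) or color-swapping on every $1$-tile at once, which already gives \eqref{item:Horient2}. In the color-preserving case, for each $1$-edge $E^1$ with corresponding arc $A\subset\CC$ the orientation on $E^1$ pushed forward by $H_1\colon A\to E^1$ from the positive boundary orientation of $\CC=\partial X^0_\wt$ coincides with the positive boundary orientation of the white $1$-tile adjacent to $E^1$, which is exactly the $f$-orientation; so \eqref{item:H4} of Definition~\ref{def:elementary_pseudo} holds on every $1$-edge and $H$ is orientation-preserving. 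In the color-swapping case the two orientations disagree on every $1$-edge, and $H$ is orientation-reversing. This dichotomy is \eqref{item:Horient1}, and its characterization in terms of tile colors is \eqref{item:Horient3}.
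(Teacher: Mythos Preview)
Your argument is correct and takes a genuinely different route. The paper establishes \eqref{item:Horient1} first, by a combinatorial analysis at each $1$-vertex: the $f$-orientations on the $1$-edges around a vertex $c$ alternate (because the colors of the incident $1$-tiles do), and if two consecutive arcs of $\CC$ are deformed by $H$ to edges $E,E'$ meeting at $c$, then the pseudo-isotopy forces an \emph{even} number of $1$-edges in the sector between $E$ and $E'$, so agreement of the $f$- and $H$-orientations propagates from $E$ to $E'$ and hence to every $1$-edge. Parts \eqref{item:Horient2} and \eqref{item:Horient3} are then obtained by computing the winding number of the parametrized closed curve $H_1(\CC)=\CC^1$ about interior points of $1$-tiles. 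You instead prove a single structural fact --- the assignment $X^1\mapsto X^0(X^1)$ is well-defined and flips across every shared $1$-edge --- and read off all three statements from the connectedness of the $1$-tile adjacency graph. Your approach is more conceptual and exploits the Moore-type properties of $H_1$ (connected fibers) directly; the paper's treatment of \eqref{item:Horient1} is more elementary and avoids that input, at the price of a separate winding-number argument for \eqref{item:Horient2} and \eqref{item:Horient3}.

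One step in your key lemma is under-justified. You assert that for $t$ near~$1$ the set $H_t(\CC)\cap D$ is a \emph{single} simple arc. You do have $H_t(\CC)\cap D\subset H_t(A)$ for the unique arc $A\subset\CC$ with $H_1(A)=E^1$, but $H_t(A)$ can leave and re-enter $D$, so connectedness of $H_t(A)\cap D$ is not automatic and the ``local separation $\Rightarrow$ global separation'' step does not go through as stated. The conclusion you need --- that $r_X,r_Y$ lie on opposite sides of the Jordan curve $H_t(\CC)$ --- follows cleanly by a winding-number continuity argument (which is in fact what the paper uses globally): for $s$ close to~$1$ the curves $H_s(\CC)$ avoid $r_X,r_Y$, so $N(H_s(\CC),r_X)-N(H_s(\CC),r_Y)$ is constant in $s$; at $s=1$ this difference equals the signed intersection of a short transversal from $r_Y$ to $r_X$ with the parametrized curve $\CC^1$, which is $\pm 1$ since the transversal meets $\CC^1$ only at $p\in\inte E^1$ and $H_1|_\CC$ passes through $p$ exactly once.
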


\begin{proof}
  \eqref{item:Horient1}
  Consider a critical point, i.e., a $1$-vertex $c$. This point is
  contained in the $1$-edges $E_0, \dots, E_{n-1}$ (where
  $n=2\deg_f(c)$), which are  
  labeled mathematically positively around $c$. 

  \smallskip
  We first note that the
  orientation on these $1$-edges induced by $f$ alternates. This is
  seen as follows. There are $n$ black as well as $n$ white $1$-tiles
  around $c$, the colors of the $1$-tiles around $c$ alternate. The
  endpoint of the edge $E_j$ is $c$ (via the orientation given by $f$)
  if and only if the sector between $E_{j-1},E_j$ contains a white
  $1$-tile (and the sector between $E_j, E_{j+1}$ contains a black
  $1$-tile). Conversely the initial point of $E_j$ is $c$ if and only
  if the sector between $E_{j-1}, E_j$ contains a black $1$-tile (and
  the sector between $E_j, E_{j+1}$ contains a white $1$-tile).

  \smallskip
  We now consider the orientation on $1$-edges induced by the
  pseudo-isotopy $H$.

  Let $A, A'\subset \CC$ be two adjacent arcs that are deformed
  to $1$-edges $E,E'$ as in
  Definition~\ref{def:elementary_pseudo}~\eqref{item:H3}, where $A\cap
  A'$ is deformed to $c$ by $H$. We assume
  that $A'$ succeeds $A$ with respect to the orientation of
  $\CC$. Then (with respect to the orientation induced by $H$) the
  endpoint of $E$ is $c$, while $c$ is the initial point of $E'$. 

  Since $H$ is a pseudo-isotopy $H_{1-\epsilon}$ is a homeomorphism
  for all $0<\epsilon<1$. It follows that in the sector between $E,
  E'$ there is an even number of $1$-edges. 

  Thus from the first claim it follows that the orientation on $E$
  induced by $H$ agrees with the orientation on $E$ induced by $f$ if
  and only if the orientations on $E'$ induced by $H$ and $f$ agree. 

  \smallskip
  Consider the arc $A''$ on $\CC$ that succeed $A'$. This is deformed
  by $H$ to a $1$-edge $E''$. The orientations induced on $E''$ by $f$
  and $H$ agree if and only if the respective orientations on $E'$
  agree. Continuing in this fashion we obtain the statement. 

% Assume
%   $A$ is deformed by $H$ to $E_0$, and $A'$ by $H$ to $E_j$, such that
%   $A\cap A'$ is deformed to $C$ by $H$.   

%   edge 

% The reason
%   is thathe white and black $1$-tiles around $c$  

%   We will show that the
%   orientation of these $1$-edges induced by alternates

%   We can find a neighborhood $U$
%   of $c$ and a coordinate system (i.e., a map $\varphi \colon U \to
%   \D$) in which $U\cap E_j$ is the ray $\{re^{2\pi j/n} \mid 0\leq r<
%   1\}$. 

%   By construction there are arcs $a_0, \dots, a_{n-1}\subset \CC$ such
%   that each arc $a_k$ is deformed by $H$ to $(E_i\cup E_j)\cap U$. 

\smallskip
\eqref{item:Horient2}
Let $x^1:= H_1(x), y^1:=H_1(y)$. 
We want to consider the winding number. To be able to do that we 
assume that $\infty$ is contained in the interior of the black
$0$-tile, furthermore we assume that $H$ deforms $\infty$ neither to
$\CC^1$ nor to $x^1$ or $y^1$. Then we can define the winding
number $N_\CC(x), N_\CC(y)$ of $\CC$ for $x$ and $y$. 

Then $N_\CC(x), N_\CC(y)$ agree if and only if $X^0, Y^0$ have the
same color. This happens if and only if $N_{\CC^1}(x^1),
N_{\CC^1}(y^1)$ agree, these winding numbers are computed by mapping   
$\CDach\setminus \{H_1(\infty)\}$ (orientation preserving) to
$\C$, i.e., identifying $\infty^1:= H_1(\infty)$ with $\infty$. The
curve $\CC^1= H_1(\CC)$ is traversed in the direction 
induced by $H_1$ as well as the orientation of $\CC$. 

Assume that $H_1(\infty)$ is contained in a black $1$-tile. 
Each $1$-edge is contained in exactly one white $1$-tile. Consider the 
$1$-edges $E_1,\dots, E_{k}$ contained in the boundary of a white
$1$-tile $Z$. By 
\eqref{item:Horient1} all these $1$-edges are either positively or
negatively oriented as boundary of $Z$. In the first case the winding 
number of $E_1\cup \dots \cup E_{k}$ is $1$, in the second case $-1$,
for all points in the interior of $Z$. For all points in the
complement of $Z$ the winding number is $0$. The same argument applies
to all white $1$-tiles. Note that by \eqref{item:Horient1} the
boundaries of all white $1$-tiles have the same orientation induced by
$H_1$, i.e., are either all positively or all negatively oriented. 
Since taking the union of all $1$-edges in the 
boundaries of all white $1$-tiles yields all $1$-edges, it follows that
the winding number of $\CC^1$ 
for all points in the interior of some white $1$-tile is either $1$
or $-1$, while the winding number for all points in the interior of
black $1$-tiles is $0$. This finishes the claim. The argument in the
case when $H_1(\infty)$ is contained in a white $1$-tiles is
completely analogous. 

\smallskip
\eqref{item:Horient3}
We use the setting as above. Assume $X^0$ is white, then the winding
number of $\CC^1$ for $x^1$ is $1$ if and only if the orientation
induced by $H_1$ of the $1$-edges in the boundary of $X^1$ agrees with
the orientation of them as boundary of $X^1$, this happens if and only
if $H$ is orientation-preserving by \eqref{item:Horient1}. If $X^0$ is
white the argument is completely analogous. 

\end{proof}

\section{Connections}
\label{sec:connections}

There is an equivalent way to describe the existence of a
pseudo-equator as in
Definition~\ref{def:elementary_pseudo}. Intuitively the  
description is most easily explained along a picture as in
Figure~\ref{fig:2}.  

Namely consider the left picture in Figure~\ref{fig:2}. Each of
the critical points $c_j$ is contained in exactly $3$ white, as well
as $3$ black $1$-tiles. Consider first the critical point $c_0$. Here
all white $1$-tiles are \emph{connected} at $c_0$, while none of the
black $1$-tiles are connected at $c_0$. 

At the critical point $c_1$ there are two white $1$-tiles connected,
and two black $1$-tiles that are connected. Also there is one white,
as well as one black $1$-tile that is not connected to any other
$1$-tile at $c_1$.  

At the critical point $c_2$ all three black $1$-tiles containing $c_2$
are connected, all three white $1$-tiles containing $c_2$ are not
connected to any other $1$-tile. 

The connection of white $1$-tiles at each critical point $c_j$ is
\emph{complementary} to the connection of black $1$-tiles at
$c_j$. The white, as well as the black $1$-tiles, are connected in such
a way that the resulting \emph{white connection graph} is a \emph{spanning
  tree}. 

The connection of white $1$-tiles may be \emph{represented
  geometrically} as in Figure~\ref{fig:2}. Taking the boundary of this
geometric representation of the white cluster results in a curve that
is isotopic to $\CC$ rel.\ $\post(f)$. 

\smallskip
We now proceed to make the above precise.

% \smallskip
% Fix a Jordan curve $\CC\subset \CDach$ with $\post(f)\subset \CC$. The
% closure of the two components of $\CDach\setminus \CC$ are called the
% two $0$-tiles, denoted by $X^0_\wt,X^0_\bt$. We \emph{color} $X^0_\wt$ white
% and $X^0_\bt$ black. Whenever we give $\CC$ an orientation, we will from
% now on always assume, that $\CC$ is positively oriented as boundary of
% $X^0_\wt$. 

% \smallskip
% The closure of a component of $\CDach\setminus f^{-1}(\CC)$ is called
% a \emph{$1$-tile}. Each such $1$-tile $X^1$ is mapped by $f$
% homeomorphically to either $X^0_\wt$ or $X^0_\bt$. We color $X^1$ white if
% $f(X^1)= X^0_\wt$, black if $f(X^1)=X^0_\bt$. The $1$-tiles for the
% example from Section~\ref{sec:an-example} are indicated at the right
% in Figure~\ref{fig:1}. 

% \smallskip
% Consider how in Figure~\ref{fig:2} we have illustrated a
% pseudo-isotopy as in Definition~\ref{def:elementary_pseudo}. We can
% clearly see the $1$-tiles in this slightly distorted picture. We say
% the $1$-tiles are \emph{connected} in a certain way. More precisely we
% say for each $1$-vertex which of the $1$-tiles that intersect in this
% $1$-vertex are connected. For example at the point in the middle all
% white $1$-tiles that contain this point are connected (at this
% point). On the other hand, none of the three black $1$-tiles
% intersecting in this point are connected. 

\smallskip
Let $X_0,\dots X_{2n-1}$ be the $1$-tiles intersecting in a
$1$-vertex $v$, ordered mathematically positively around $v$. The
white $1$-tiles have even index, the black ones odd index. We consider
a decomposition $\pi_\wt=\pi_\wt(v)$ of $\{0, 2, \dots , 2n-2\}$ (i.e., of
indices corresponding to white $1$-tiles around $v$); and a
decomposition $\pi_\bt=\pi_\bt(v)$ of $\{1,3,\dots, 2n-1\}$ (i.e., of
indices corresponding to black $1$-tiles around $v$). They satisfy the
following:
\begin{itemize}
\item They are \emph{decompositions}. This means $\pi_\wt=\{b_1,\dots,
  b_N\}$, where each \defn{block} $b_i$ is a subset of $\{0,2,\dots,
  2n-2\}$, $b_i\cap b_j=\emptyset$ ($i\neq j$), and $\bigcup b_i =
  \{0,2, \dots, 2n-2\}$. Similarly for $\pi_\bt$. 
  %The sets $b_i$ are
  %called the \emph{blocks} of the decomposition.
\item The decompositions $\pi_\wt,\pi_\bt$ are \defn{non-crossing}. This
  means the following. Two distinct blocks $b_i,b_j\in \pi_\wt$ are
  \emph{crossing} if there are numbers $a,c\in b_i$, $b,d\in b_j$ and
  \begin{equation*}
    a < b < c < d. 
  \end{equation*}
  Each partition $\pi_\wt,\pi_\bt$ does not contain any (pair of) crossing
  blocks. 
\item The partitions $\pi_\wt,\pi_\bt$ are \defn{complementary}. This
  means the following. Given $\pi_\wt$, the partition $\pi_\bt$ is the
  unique, biggest partition (of $\{1,3,\dots, 2n-1\}$) such that
  $\pi_\wt\cup \pi_\bt$ is a non-crossing partition of $\{0,1,\dots,
  2n-1\}$.  
\end{itemize}
A partition $\pi_\wt\cup \pi_\bt$ as above is called a \defn{complementary
  non-crossing partition}, or cnc-partition. 
We may \emph{represent} a cnc-partition \emph{geometrically} as
follows. Let $i,j\in b\in \pi_\wt$. We call these indices
\emph{succeeding} (in $b$), if $i+1, i+2, \dots, j-1 \;(\bmod\, 2n)
\notin b$. Let $e_k:= \exp(2\pi k i/2n)$, $k=0, \dots, 2n-1$ be the $2n$
unit roots. Consider the closed unit disk $\Dbar$. For each pair of
succeeding indices $i,j$ in any white block $b\in \pi_\wt$  we draw a
Jordan arc $g_m$ ($m=1,\dots, n$) in $\Dbar$ connecting $e_{i+1},
e_j$. Each such arc intersects $\partial \D$ only in its endpoints
(i.e., in $e_{i+1}, e_j$). Furthermore two distinct such arcs are
disjoint. Figure~\ref{fig:connection} shows a geometric representation 
of the cnc-partition $\pi_\wt\cup \pi_\bt$ given by
$\pi_\wt=\left\{\{0,4,6\}, \{2\}\right\}$,
$\pi_\bt=\left\{\{1,3 \},\{5\},\{7\}\right\}$. 
 
\begin{figure}
  \centering
  
  \includegraphics[width=11cm]{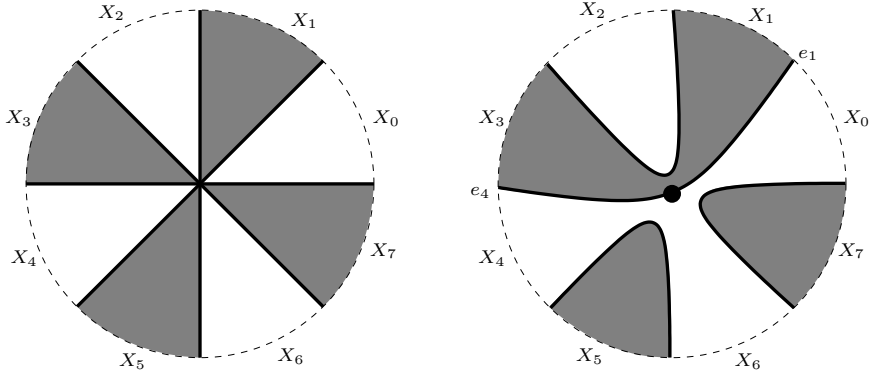}
  \begin{picture}(10,10)
    %
    % left figure
    \put(-220,0){$\scriptstyle{X_6}$}
    \put(-185,40){$\scriptstyle{X_7}$}
    \put(-184,90){$\scriptstyle{X_0}$}
    \put(-215,127){$\scriptstyle{X_1}$}
    \put(-288,130){$\scriptstyle{X_2}$}
    \put(-323,90){$\scriptstyle{X_3}$}
    \put(-321,37){$\scriptstyle{X_4}$}
    \put(-280,-2){$\scriptstyle{X_5}$}
    %
    % right figure
    \put(-47,-2){$\scriptstyle{X_6}$}
    \put(-8,40){$\scriptstyle{X_7}$}
    \put(-6,90){$\scriptstyle{X_0}$}
    \put(-42,129){$\scriptstyle{X_1}$}
    \put(-106,131){$\scriptstyle{X_2}$}
    \put(-144,90){$\scriptstyle{X_3}$}
    \put(-144,37){$\scriptstyle{X_4}$}
    \put(-107,0){$\scriptstyle{X_5}$}    
    \put(-23,115){$\scriptstyle{e_1}$}
    \put(-147,63){$\scriptstyle{e_4}$}        
  \end{picture}
  \caption{Connection at a vertex.}
  \label{fig:connection}
\end{figure}

The arcs $g_m$ divide $\Dbar$ into $n+1$ components. We color those
components in a checkerboard fashion black and white, such that
components which share an arc $g_m$ as its boundary have different
color. Furthermore we color the component having the circular arc
between $e_0,e_1$ on $\partial \D$ white. 

A geometric representation of a cnc-partition is to be thought of as a   
blow-up of the $1$-edges incident to a $1$-vertex $v$. We will
sometimes require to know where in the geometric representation the
original vertex is located. This is achieved by \emph{marking} the
cnc-partition. This means we mark one of the arcs $g_m$
above. Equivalently we may mark a pair of succeeding indices $i,j$
contained in some white block $b\in\pi_\wt$. If a cnc-partition is
marked we always let the marked arc $g_m$ contain the origin. 

In Figure~\ref{fig:2} the arc connecting $e_{0+1},e_4$, equivalently
the succeeding indices $0,4$ of the block $\{0,4,6\}$, is marked.

\medskip
A \defn{connection} (of
$1$-tiles) assigns to each $1$-vertex $v$ a cnc-partition $\pi_\wt(v)
\cup \pi_\bt(v)$ as above. Furthermore in the case when $v=p$ is a
postcritical point, the cnc-partition is marked. This marking is
mostly relevant in the case when the postcritical point is at the same
time a critical point, since then there are several choices which arc
to mark. 
Two
$1$-tiles $X_i,X_j\ni v$ are said to be \defn{connected at $v$} if
the indices $i,j$ are contained in the \emph{same} block of $\pi_\wt(v)\cup
\pi_\bt(v)$. Note that tiles of different color are never connected. The
$1$-tile $X_i$ is \defn{incident} (at $v$) to the block $b\ni i$ of $\pi_\wt(v)\cup
\pi_\bt(v)$.  

The \emph{white connection graph} is defined as follows. For each
white $1$-tile there is a vertex and for each block $b\in \pi_\wt(v)$
(for any $1$-vertex $v$) there is a vertex. There is an edge in the white
connections graph between $b\in \pi_\wt(v)$ and the white $1$-tile
$X\ni v$
if and only if $X$ is incident to $b$ at $v$. 

A \emph{geometric representation} of a connection is achieved as
follows. For each $1$-vertex $v$ a neighborhood of $v$ ``looks like''
the picture on the left in Figure~\ref{fig:connection}. This is
replaced by the picture on the right in Figure~\ref{fig:connection},
i.e., by a geometric representation of the cnc-partition that
represents the connection at $v$. More precisely there is a
neighborhood $U_v$ of $v$ and a homeomorphism $\varphi=\varphi_v
\colon U_v\to 
\D$ with the following properties. The point $v$ is mapped to the
origin. For any $1$-edge $E\ni 
v$, the map $\varphi$ maps $E\cap U_v$ to some ray $R_k=\{r\exp(2\pi k
i/2n)\mid 0\leq r<1\}$, where $k=0,\dots, 2n-1$. If $X\ni v$ is a
white $1$-tile, then $X\cap U_v$ is mapped by $\varphi$ to the sector
between two rays $R_{2k}, R_{2k+1}$. Finally distinct $1$-vertices
$v,w$ have disjoint neighborhoods $U_v, U_w$. Draw a geometric
representation of the cnc-partition $\pi_\wt(v)\cup \pi_\bt(v)$ in
$\D$ as above. Replace $U_v$ by the preimage (of this geometric
representation) by $\varphi_v$ (for each $1$-vertex $v$).  

\smallskip
Given a geometric representation of a connection we obtain several
black and white components. The following is \cite[Lemma
6.23]{inv_Peano}. 

\begin{lemma}
  \label{lem:conn_graph_geom}
  Assume a connection (of $1$-tiles) is given. Then the following are
  equivalent.
  \begin{itemize}
  \item The white connection graph is a spanning tree.
  \item In each geometric representation (of the connection) there is
    a single white component $V_\wt$, which is a Jordan domain.
  \end{itemize}
\end{lemma}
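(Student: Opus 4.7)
My plan is to prove the equivalence via two ingredients: first, that $\partial V_\wt$ is always a disjoint union of Jordan curves; and second, that $V_\wt$ is homotopy equivalent to the white connection graph $G_\wt$. For the first ingredient, I would count locally inside each neighborhood $U_v$ to show that each $e_k \in \partial \D$ is the endpoint of exactly one arc $g_m$: when $k$ is odd, the succeeding pair $(k-1, j)$ in the white block containing $k-1$ contributes an endpoint at $e_{(k-1)+1} = e_k$, while when $k$ is even the succeeding pair $(i, k)$ contributes an endpoint at $e_k$; no other succeeding pair contributes. Outside $U_v$, exactly one $1$-edge emanates from $\varphi_v^{-1}(e_k)$. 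Hence every vertex of the $1$-complex $\partial V_\wt$ has degree $2$, and $\partial V_\wt$ is a closed $1$-manifold in $S^2$, i.e., a disjoint union of some number $\kappa \geq 1$ of Jordan curves.

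For the second ingredient, I would exhibit a CW decomposition of $V_\wt$ whose $2$-cells are the truncated white $1$-tiles $X \cap (S^2 \setminus \bigcup_v U_v)$ and the (simply connected) white block regions $R_b \subset U_v$, one for each $b \in \pi_\wt(v)$, glued along $1$-cells which are arcs on $\partial U_v$. A truncated tile $X$ at a vertex $v$ is glued to $R_b$ along one such arc, where $b \in \pi_\wt(v)$ is the block containing the index of $X$ at $v$. Collapsing each (contractible) $2$-cell to a point produces exactly $G_\wt$ and is a homotopy equivalence $V_\wt \simeq G_\wt$. In particular $\chi(V_\wt) = \chi(G_\wt)$ and the two spaces have the same number of connected components.

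Combining these yields the lemma. If $G_\wt$ is a spanning tree then $G_\wt$ is connected with $\chi(G_\wt) = 1$, so $V_\wt$ is connected with $\chi(V_\wt) = 1$. But a connected open subsurface of $S^2$ bounded by $\kappa$ disjoint Jordan curves is homeomorphic to a sphere with $\kappa$ open disks removed and has Euler characteristic $2 - \kappa$; hence $\kappa = 1$ and $V_\wt$ is a Jordan domain. Conversely, if $V_\wt$ is a single Jordan domain then $V_\wt$ is connected with $\chi(V_\wt) = 1$, so $G_\wt$ is connected with $\chi(G_\wt) = 1$; a connected graph with $|V| - |E| = 1$ is a tree, so $G_\wt$ is a spanning tree.

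The main technical obstacle I expect is the verification of the CW decomposition underlying the homotopy equivalence. Concretely, one must check that inside each $U_v$ the sectors of the white $1$-tiles whose indices lie in a common block $b \in \pi_\wt(v)$ all lie in a single simply connected white region $R_b$ of the geometric representation, and that each such sector meets $R_b$ along a single arc on $\partial U_v$. Once this local picture is pinned down, the deformation retract $V_\wt \to G_\wt$ and the subsequent Euler-characteristic bookkeeping follow routinely.
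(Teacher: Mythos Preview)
The paper does not prove this lemma itself; it is quoted without proof from \cite{inv_Peano} (Lemma~6.23 there), so there is no in-paper argument to compare against. That said, your proposal is correct and self-contained. The two ingredients you isolate---that $\partial V_\wt$ is a closed $1$-manifold in $S^2$, hence a disjoint union of Jordan curves, and that $V_\wt$ is homotopy equivalent to the white connection graph $G_\wt$ via the CW structure you describe---are exactly what is needed, and the Euler-characteristic count $\chi(V_\wt)=\chi(G_\wt)$ together with $\chi=2-\kappa$ for a connected planar domain bounded by $\kappa$ Jordan curves then handles both directions cleanly.

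The technical point you flag is real but tractable, and in fact your own degree-two counting already does most of the work: since exactly one arc $g_m$ ends at each $e_k$, the circular arcs on $\partial\D$ alternate in color, and one then checks that each white region in $U_v$ is a disk bounded by the arcs $g_m$ coming from the succeeding pairs of a single block $b\in\pi_\wt(v)$ together with the circular arcs over the indices $i\in b$. This yields the bijection between white regions in $U_v$ and blocks of $\pi_\wt(v)$, the simple connectivity of each $R_b$, and the fact that the truncated tile for $X_i$ meets $R_b$ along the single circular arc from $e_i$ to $e_{i+1}$. After that the collapse $V_\wt\to G_\wt$ is a standard nerve-type deformation retraction.
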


The following is \cite[Lemma~7.2]{inv_Peano}. 

\begin{lemma}
  \label{lem:equ_conn_pseudo_eq}
  Let $f\colon \CDach \to \CDach$ be a postcritically finite rational
  map with Julia set $\J(f)= \CDach$ and $\CC\subset \CDach$ be a
  Jordan curve with $\post(f)\subset \CC$. Then
  \begin{itemize}
  \item there is a pseudo-isotopy $H\colon \CDach \times [0,1] \to
    \CDach$ as in Definition~\ref{def:elementary_pseudo} (i.e., $f$
    has a pseudo-equator) if and only if 
  \item there is a connection of $1$-tiles, such that the white
    connection graph is a spanning tree and the boundary $\partial
    V_\wt$ (of the white component $V_\wt$ from
    Lemma~\ref{lem:conn_graph_geom}) is orientation-preserving
    isotopic to $\CC$ rel.\ $\post(f)$.   
  \end{itemize}
\end{lemma}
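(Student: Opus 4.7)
The plan is to prove the equivalence by constructing explicit translations in both directions, in each case producing the combinatorial data out of the topological data and vice versa.

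For the forward implication $(\Rightarrow)$, given a pseudo-isotopy $H$ satisfying Definition~\ref{def:elementary_pseudo}, I would read off a connection at each $1$-vertex $v$ by examining how $H_1$ collapses a neighborhood of $H_1^{-1}(v)\cap\CC$. By Remark~\ref{rem:preH} this intersection consists of exactly $\deg_f(v)$ points. The preimages $H_1^{-1}(\inte X^1)$ of $1$-tile interiors are disjoint open sets, each contained in a $0$-tile of the same color by Lemma~\ref{lem:Hdeformswb}\,\eqref{item:Horient2}--\eqref{item:Horient3}. I would declare two $1$-tiles around $v$ connected at $v$ precisely when the closures of their $H_1$-preimages share a common point of $H_1^{-1}(v)\cap\CC$. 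Planarity of the configuration and the alternation of tile colors around $v$ then force the resulting partitions $\pi_\wt(v)$ and $\pi_\bt(v)$ to be complementary and non-crossing, and markings at postcritical points $p\in\post(f)$ are determined by the arc at $p$ that $H$ keeps fixed. Under this assignment the white component $V_\wt$ of the geometric representation corresponds tautologically to $H_1^{-1}(\inte X^0_\wt)$, which is connected, so by Lemma~\ref{lem:conn_graph_geom} the white connection graph is a spanning tree. Its boundary can be taken to be $\CC$ itself, making the orientation-preserving isotopy condition trivial.

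For the reverse implication $(\Leftarrow)$, I would build the pseudo-isotopy from the connection data. By Lemma~\ref{lem:conn_graph_geom} the white component $V_\wt$ of a fixed geometric representation is a Jordan domain bounded by a curve $\CC':=\partial V_\wt$. First, I define a continuous surjection $\psi\colon\CDach\to\CDach$ which collapses each blown-up neighborhood at a $1$-vertex back to the underlying point and is a homeomorphism elsewhere; this carries $\CC'$ to $\CC^1=f^{-1}(\CC)$. The induced equivalence relation on $\CDach$ is of Moore-type: its nontrivial classes are finite trees located at individual $1$-vertices, hence connected and non-separating. So by Theorem~\ref{thm:Moore} $\psi$ is the end of a pseudo-isotopy $K$. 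Second, since by hypothesis $\CC'$ is orientation-preserving isotopic to $\CC$ rel.\ $\post(f)$, let $J$ be an ambient isotopy realizing this deformation. The concatenation of $J$ with $K$, suitably reparametrized, is the desired pseudo-isotopy $H$: property \eqref{item:H1} follows from $\psi(\CC')=\CC^1$; \eqref{item:H2} and \eqref{item:H3} from the finiteness of the collapse locus $\W$ on $\CC'$ (and hence on $\CC$ via $J$); and \eqref{item:H4} is obtained by invoking Lemma~\ref{lem:Hdeformswb}\,\eqref{item:Horient3} after verifying that $H$ sends $\inte X^0_\wt$ into interiors of white $1$-tiles, which is built into the checkerboard convention of the geometric representation.

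The main obstacle is the orientation/marking bookkeeping at $1$-vertices that are also postcritical points. At such a point $p$ the cnc-partition comes with a marked arc, and this marking must match the arc through which the fixed point $p$ sits in the geometric representation; in the $(\Leftarrow)$ direction, the isotopy $J$ must be chosen to fix $\post(f)$ pointwise while carrying $\CC$ to $\CC'$ through precisely those marked arcs, and in the $(\Rightarrow)$ direction one must verify that the marking read off from $H$ is unambiguous at postcritical critical points. Checking orientation-preservation in \eqref{item:H4} is the technical heart of the argument: $J$ is orientation-preserving by hypothesis, but one must track how the collapse $\psi$ interacts with this orientation and apply Lemma~\ref{lem:Hdeformswb}\,\eqref{item:Horient3} to equate orientation-preservation with the ``white-goes-to-white'' condition that characterizes the correct color convention on the geometric representation.
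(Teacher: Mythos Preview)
The paper does not actually prove this lemma: it is stated with the attribution ``The following is \cite[Lemma~7.2]{inv_Peano}'' and no argument is given. So there is no in-paper proof to compare against; your sketch is a reasonable outline of how the cited proof in \cite{inv_Peano} in fact proceeds (reading off a cnc-partition at each $1$-vertex from the local behavior of $H$ near $v$ in one direction, and in the other direction concatenating an ambient isotopy $\CC\to\partial V_\wt$ with a Moore-type collapse of the blown-up vertex disks).

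Two points in your write-up deserve tightening. First, in the forward direction you write that $V_\wt$ ``corresponds tautologically to $H_1^{-1}(\inte X^0_\wt)$''. This is not the right object: $H_1$ carries $\inte X^0_\wt$ \emph{into} the union of white $1$-tile interiors (Lemma~\ref{lem:Hdeformswb}\,\eqref{item:Horient3}), not the other way around. What you want is that $H_{1-\epsilon}(\inte X^0_\wt)$, for small $\epsilon>0$, realizes the white component of a geometric representation; its boundary $H_{1-\epsilon}(\CC)$ is then carried back to $\CC$ by the isotopy $H_t$, $t\in[0,1-\epsilon]$, which is an honest isotopy rel.\ $\post(f)$ and hence orientation-preserving. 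Second, your claim that the orientation-preserving isotopy condition becomes ``trivial'' is therefore correct once phrased this way, but it is not a tautology from the connection data alone; it genuinely uses that $H$ is an isotopy on $[0,1)$ and fixes $\post(f)$.
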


Here $\partial V_\wt$ is positively oriented as boundary of the white
component $V_\wt$ (and $\CC$ is positively oriented as boundary of the
white $0$-tile $X^0_\wt$ as before). 

\begin{theorem}
  \label{thm:P3mate}
  Let $f\colon \CDach \to \CDach$ be a postcritically finite rational
  map, with Julia set $\J(f)=\CDach$ and $\#\post(f)=3$. Then $f$ or
  $f^2$ arises as a mating. 
\end{theorem}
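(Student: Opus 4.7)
The plan is to reduce the theorem to the existence of a pseudo-equator and invoke Theorem~\ref{thm:mating_suff}. By Lemma~\ref{lem:equ_conn_pseudo_eq}, a pseudo-equator is equivalent to choosing a Jordan curve $\CC\supset\post(f)$ together with a connection of $1$-tiles whose white connection graph is a spanning tree, and whose resulting white region $V_\wt$ has boundary $\partial V_\wt$ orientation-preservingly isotopic to $\CC$ rel.\ $\post(f)$. I will construct such data either for $f$ or for $f^2$.

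First I would pick any Jordan curve $\CC$ through $\post(f)=\{p_1,p_2,p_3\}$, so each $0$-tile is a topological triangle containing no postcritical point. Using that $\J(f)=\CDach$, the $1$-tiles tile $\CDach$ in a checkerboard pattern. I would then construct a connection at each $1$-vertex $v$ by selecting a spanning tree $T$ in the graph whose vertices are white $1$-tiles and whose edges represent pairs of white $1$-tiles meeting at a common $1$-vertex. The spanning tree must be chosen compatibly with the cyclic orders at each $v$ so that the induced white block decomposition $\pi_\wt(v)$ is non-crossing; I would then let $\pi_\bt(v)$ be the (unique) complementary non-crossing partition, and mark the postcritical vertices so that $p_1,p_2,p_3$ lie on $\partial V_\wt$. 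Lemma~\ref{lem:conn_graph_geom} then guarantees that $V_\wt$ is a Jordan domain, so $\partial V_\wt$ is a Jordan curve through the three postcritical points.

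Next I would identify the isotopy class of $\partial V_\wt$. Since $\#\post(f)=3$, any Jordan curve through these three points separates $\CDach$ into two open disks containing no postcritical points, and there are exactly two isotopy classes of such curves rel.\ $\post(f)$, distinguished by the cyclic order of $p_1,p_2,p_3$. Hence $\partial V_\wt$ is automatically isotopic rel.\ $\post(f)$ to $\CC$, either orientation-preservingly or orientation-reversingly. In the orientation-preserving case, Lemma~\ref{lem:equ_conn_pseudo_eq} provides a pseudo-equator for $f$, and Theorem~\ref{thm:mating_suff} finishes the argument for $f$ itself.

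In the orientation-reversing case I would pass to $f^2$. Let $H$ be the elementary pseudo-isotopy produced above, deforming $\CC$ to $\CC^1=f^{-1}(\CC)$. For $\epsilon>0$ small, $H_{1-\epsilon}$ is a homeomorphism of $\CDach$, so I can lift via the branched covering $f$ to a pseudo-isotopy $H'$ rel.\ $f^{-1}(\post(f))$, and in particular rel.\ $\post(f)=\post(f^2)$, deforming $\CC^1$ to $\CC^2=f^{-2}(\CC)=(f^2)^{-1}(\CC)$. The concatenation $H\ast H'$ is then an elementary pseudo-isotopy for $f^2$ deforming $\CC$ to $(f^2)^{-1}(\CC)$. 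By Lemma~\ref{lem:Hdeformswb}\eqref{item:Horient3}, $H$ being orientation-reversing for $f$ means $H_1$ sends the interior of the white $0$-tile into the black $1$-tile region; the lifted step $H'$ swaps colors once more, so $H\ast H'$ sends white to white and is orientation-preserving as a pseudo-equator for $f^2$. Applying Theorem~\ref{thm:mating_suff} to $f^2$ then completes the proof.

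The hard part will be the combinatorial step of arranging the spanning tree in the white connection graph so that the partition at every $1$-vertex simultaneously becomes non-crossing; this uses planarity of the sphere together with the especially simple triangular $0$-tile structure afforded by $\#\post(f)=3$. The orientation bookkeeping when passing to $f^2$ is conceptually clean via Lemma~\ref{lem:Hdeformswb}, but requires care when lifting the pseudo-isotopy through the branched cover $f$.
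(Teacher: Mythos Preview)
Your outline is correct and follows essentially the same strategy as the paper: build a connection of $1$-tiles whose white connection graph is a spanning tree (the paper cites \cite[Corollary~6.20]{inv_Peano} for this greedy construction), use that any two Jordan curves through three marked points on $S^2$ are isotopic rel.\ those points, and in the orientation-reversing case lift $H$ by $f$ and combine to obtain a pseudo-equator for $f^2$. The paper's execution differs only in bookkeeping: it combines the two pseudo-isotopies via the diagonal formula $K(x,t)=\widetilde{H}(H(x,t),t)$ rather than concatenation, and it verifies that the lift $\widetilde{H}$ is again orientation-reversing directly from the fact that $f\colon \widetilde{A}\to A$ and $f\colon \widetilde{E}\to E$ are orientation-preserving on edges, rather than through the color criterion of Lemma~\ref{lem:Hdeformswb}\eqref{item:Horient3}.
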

 
\begin{proof}
  Let $f\colon \CDach \to \CDach$ be as in the statement. 
  Let $\CC\subset \CDach$ be a Jordan curve with $\post(f)\subset
  \CC$. We choose the closure of one component of $\CDach\setminus
  \CC$ to be $X^0_\wt$. Then $\CC$ is positively oriented as boundary
  of the white $0$-tile 
  $X^0_\wt$. It is easy to construct a connection of white $1$-tiles
  such that the resulting white connection graph is a spanning tree,
  see \cite[Corollary 6.20]{inv_Peano}. Indeed one starts with a
  connection where no white $1$-tiles are connected at any $1$-vertex,
  successively one ``adds'' other $1$-tiles, until one obtains a
  spanning tree as desired. 

  From now on the connection (of white $1$-tiles), which results in
  the white connection graph being a spanning tree, is fixed.  
  Let $V_\wt$ be the white component, according to
  Lemma~\ref{lem:conn_graph_geom}. Let $\partial V_\wt$ be its
  boundary, oriented positively as boundary of $V_\wt$. 

  \smallskip
  The following
  fact is well-known (a proof can however be 
  found in \cite[Chapter 9.2]{THEbook}). Let $P\subset S^2$ be a set
  with $\# P \leq 3$ and $\gamma,\gamma'\subset S^2$ be two Jordan
  curves, which both contain $P$. Then $\gamma, \gamma'$ are isotopic
  rel.\ $P$. However if $\gamma,\gamma'\subset S^2$ are oriented it is
  not necessarily true that $\gamma,\gamma'$ are
  \emph{orientation-preserving} isotopic rel.\ $P$.  
  
  \smallskip
  If $\partial V_\wt$ is orientation-preserving isotopic to $\CC$
  rel.\ $\post(f)$ we are done by Lemma~\ref{lem:equ_conn_pseudo_eq}
  and Theorem~\ref{thm:mating_suff}, i.e., $f$ arises as a mating. 

  \smallskip
  Assume $\partial V_\wt$ is not orientation-preserving isotopic to
  $\CC$ rel.\ $\post(f)$. Then $\partial V_\wt$ is
  orientation-reversing isotopic to $\CC$ rel.\ $\post(f)$. We obtain
  an elementary, orientation-reversing pseudo-isotopy $H$, that
  deforms $\CC$ to $\CC^1$ as in
  Lemma~\ref{lem:Hdeformswb}~\eqref{item:Horient1}.   

  \smallskip
  Let $\widetilde{H}\colon \CDach \times [0,1] \to \CDach$ be the lift
  of $H$ by $f$ with $\widetilde{H}_0=\id_{\CDach}$. We note some properties of
  the lift $\widetilde{H}$, proofs may be found in \cite[Lemma~3.5 and
  Lemma~3.6]{inv_Peano}.
  Namely $\widetilde{H}$ is a pseudo-isotopy rel.\
  $\V^1$ ($=$ set of 
  $1$-vertices) such that $f\circ \widetilde{H}(x,t) =  H(f(x), t)$
  for all $x\in \CDach$, $t\in [0,1]$. It deforms $\CC^1$ to $\CC^2:=
  f^{-1}(\CC^1) = f^{-2}(\CC)$, i.e., $\widetilde{H}_1 (\CC^1)=
  \CC^2$. 
  We call $\V^2:=f^{-2}(\post(f))$ the set of $2$-vertices, the
  closure of one component of $\CC^2\setminus \V^2$ is called a
  $2$-edge. Only finitely many points $x\in \CC^1$ are deformed to any 
  $2$-vertex (i.e., to any point $v\in f^{-2}(\post(f))$). The points
  $x$ as before divide $\CC^1$ into closed arcs. There is a bijection
  between such closed arcs $\widetilde{A}\subset \CC^1$ and
  $2$-edges. Finally for each such arc $\widetilde{A}\subset \CC^1$
  there is a $1$-edge $E$ and the map $\widetilde{H}_1\colon
  \widetilde{A} \to E$ is a homeomorphism.

  \begin{claim}
    $\widetilde{H}$ deforms $\CC^1$ to $\CC^2$ in an
    orientation-reversing way. 

    \mbox{}    
    As before this means that the orientations induced on $\CC^2$ by
    $f\colon \CC^1 \to \CC^2$ and $\widetilde{H}_1\colon \CC^1\to \CC^2$
    disagree.  

    Each $2$-edge $\widetilde{E}$ is mapped
    by $f$ homeomorphically to a $1$-edge $E\subset \CC^1$ (see
    \cite[Chapter~5.3]{THEbook}). Recall from
    Definition~\ref{def:elementary_pseudo}~\eqref{item:H2} that there
    is a closed arc $A\subset \CC$ that is deformed by $H$ to $E$. 
    Since $\widetilde{H}$ is the lift of $H$ by $f$, there is an arc
    $\widetilde{A}\subset \CC^1$ that is mapped homeomorphically to
    $\widetilde{E}$ by $\widetilde{H}_1$, and $f(\widetilde{A})=
    A$. Note that $f\colon \widetilde{A}\to A$, as well as $f\colon \colon
    \widetilde{E} \to E$, is orientation-preserving. Thus it follows
    that $H$ is orientation-reversing if and only if $\widetilde{H}$
    is orientation-reversing, proving the claim. The argument is
    worked out in more detail in \cite[Lemma~3.12]{inv_Peano}. 
  \end{claim}

  Consider $K\colon \CDach \times [0,1] \to \CDach$ given by $K(x,t):=
  \widetilde{H}(H(x,t),t)$. This is a pseudo-isotopy rel.\ $\post(f)$
  that deforms $\CC$ elementary, orientation-preserving to $\CC^2=
  f^{-2}(\CC)$. Thus it follows from Theorem~\ref{thm:mating_suff}
  that $f^2$ arises as a mating. 
\end{proof}

\begin{remark}
  We do not know whether it is necessary to take the second iterate
  $f^2$ in the previous theorem. 
\end{remark}

\section{Critical portraits}
\label{sec:critical-portraits}

In the next section we will describe an algorithm to unmate a rational 
map $f$. More precisely, from a pseudo-equator as in
Definition~\ref{def:elementary_pseudo} we can recover the white and
black polynomial $P_\wt, P_\bt$ that yield $f$ as their mating. We
need however a description of polynomials that is adapted to the
situation. Namely we need a description in terms of external angles. 

Fix an integer $d\geq 2$ ($d$ will be the degree of the rational map
$f$ as well as the polynomials $P_\wt, P_\bt$). The map $\mu\colon
\R/\Z \to \R/\Z$ is $\mu(t):= dt \bmod 1$. We will somehow abuse
notation by identifying a point $x\in \R$ with the corresponding
equivalence class $[x]\in \R/\Z$, similarly we identify $q\in \Q$ with
the corresponding $[q]\in \Q$.  
\begin{definition}
\label{def:crit_portrait}
  A list $\mathcal{A}=A_1,\dots, A_m$
  is called a \defn{critical portrait} if conditions
  (CP~\ref{item:CP1})--(CP \ref{item:CP6}) are satisfied.
  \begin{enumerate}[\upshape(CP 1)]
  \item 
    \label{item:CP1}
    Each $A_j\subset \Q/\Z\subset \R/\Z$ is a finite set, distinct
    sets $A_i,A_j$ are disjoint;  
  \item $\mu$ maps each set $A_j$ to a single point,
    \begin{equation*}
      \mu(A_j)= \{a_j\},
    \end{equation*}
    for all $j=1,\dots, m$;
  \item $\sum_j \left(\#A_j -1\right) = d - 1$.
  \item The sets are \emph{non-crossing}. This means the following. Two
    distinct sets $A_i,A_j$ are called \emph{crossing}, if there are
    (representatives) $s,u\in
    A_i, t,v\in A_j$ such that $0\leq s< t< u< v \leq 1$, otherwise
    \emph{non-crossing}. All distinct sets $A_i,A_j$ of the critical
    portrait are non-crossing.
  \item No $a\in A_j$ is periodic under $\mu$ (for any $j=0,\dots,
    m-1$). 
    \setcounter{mylistnum}{\value{enumi}}
  \end{enumerate}
  The set $\A:= \bigcup \{\mu^k(A_j) \mid j=1,\dots, m, \; k\geq 1\}$,
  i.e., the union of forward orbit of all angles in any of the sets $A_j$, then
  is a finite set. 
  \begin{enumerate}[\upshape(CP 1)]
    \setcounter{enumi}{\value{mylistnum}}
  \item No set $A_j$ contains more than one point of $\A$. 
    \setcounter{mylistnum}{\value{enumi}}
  \end{enumerate}
  The following definition is somewhat technical. The reader should
  think of $\R/\Z=S^1$ as being the boundary of the unit disk $\D$. We
  form the convex hull of each set $A_j$ (with respect to the
  hyperbolic metric on $\D$. We remove all these hulls. The closure of
  one remaining component is called a \emph{$1$-gap}. Points in the same
  gap should be thought of as  being not separated by the sets
  $A_j$. We are really only interested in points on the unit circle. 

  Here is the formal definition. The points $\bigcup A_j$ divide the
  circle into closed intervals $[a,b]$, i.e., $a,b\in \bigcup A_j$ and $(a,b)$
  does not contain any point from $\bigcup A_j$. A $1$-gap is a
  union of such intervals. 
  Two intervals
  $[a_1,b_1], [a_2, b_2]$ belong to the same \emph{$1$-gap} if and
  only if for any two points $c_1\in (a_1,b_1), c_2 \in (a_2,b_2)$,
  the sets $\{c_1, c_2\},  A_j$ are non-crossing for all
  $j=1,\dots,n$. 

  Two points $x,y\in \R/\Z$ belong to the same \emph{$n$-gap} ($n\geq
  1$) if
  $\mu^k(x),\mu^k(y)$ belong to the same $1$-gap for all $k=0, \dots ,
  n-1$. 
  \begin{enumerate}[\upshape(CP 1)]
    \setcounter{enumi}{\value{mylistnum}}
  \item 
    \label{item:CP6}
    There is a constant $n_0\in \N$ such that the following
    holds. Two distinct points $a,b\in \A$ are not contained in the
    same $n_0$-gap. 
  \end{enumerate}
\end{definition}

\begin{theorem}[Bielefeld-Fisher-Hubbard \cite{MR1149891}]
  \label{thm:poirier}
  Let $\mathcal{A}$ be a critical portrait as in
  Definition~\ref{def:crit_portrait}. Then there is a (unique up to
  affine conjugacy) monic polynomial $P$ of degree $d$ realizing
  it. The polynomial $P$ is postcritically finite, each critical
  point of $P$ is strictly preperiodic.  
\end{theorem}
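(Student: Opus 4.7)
The plan is to construct from the combinatorial data $\mathcal{A}$ a topological polynomial $F\colon S^2\to S^2$ whose critical portrait is $\mathcal{A}$, then apply Thurston's characterization theorem to promote $F$ to a bona fide rational polynomial. Uniqueness up to affine conjugacy will follow from the combinatorial rigidity of postcritically finite polynomials (once realizability is established).

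\textbf{Construction of the topological polynomial.} First I would build a ``lamination'' in the closed unit disk $\Dbar$: for each $A_j$ take the hyperbolic convex hull of $A_j\subset S^1$, and consider the closures of the components of $\Dbar$ minus these hulls (these are exactly the $1$-gaps of Definition~\ref{def:crit_portrait}, enriched into planar regions). Condition (CP~4) (non-crossing) guarantees these hulls are disjoint; (CP~3) gives the right total criticality. Identifying each hull of $A_j$ to a point produces a topological model on which I can define a degree-$d$ branched cover that extends $\mu\colon S^1\to S^1$, has a single critical point coming from each $A_j$ with local degree $\#A_j$, and behaves like $z^d$ near $\infty\in S^2 = \Dbar/\!\sim$. (Near infinity I would use B\"ottcher coordinates as a template.) Call this topological polynomial $F$. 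By construction the external angles landing at the $j$-th critical value are exactly $\{a_j\}=\mu(A_j)$, so $F$ realizes the critical portrait $\mathcal{A}$; condition (CP~5) makes every critical point strictly preperiodic, and (CP~1) makes the post-critical set finite, so $F$ is a Thurston polynomial.

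\textbf{Ruling out Thurston obstructions.} By Thurston's theorem \cite{DouHubThurs}, a postcritically finite topological polynomial with hyperbolic orbifold is equivalent to a rational polynomial iff it admits no Thurston obstruction. Because $F$ is a topological polynomial (the point $\infty$ is totally invariant), the orbifold is automatically hyperbolic under (CP~5), and any hypothetical obstruction reduces — via the standard Levy/Berstein--Levy argument for polynomials — to the existence of a Levy cycle: a cycle of disjoint simple closed curves $\gamma_i\subset S^2\setminus\post(F)$, each of which has some preimage component $\tilde\gamma_i$ mapped homeomorphically by $F$ and isotopic rel.\ $\post(F)$ to $\gamma_{i-1}$. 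I would translate such a Levy cycle into the angle picture: each $\gamma_i$ separates $\post(F)$ and thus corresponds to a gap $G_i$ of the lamination whose boundary leaves are fixed (up to isotopy) by backwards iteration. This would force two distinct points of $\A$ to lie in the same $n$-gap for every $n$, directly contradicting (CP~6). This step is where (CP~6) does all the work, and it is the main obstacle: one must check carefully that \emph{any} obstructing multicurve is traced by the lamination so that (CP~6) can be invoked, which is where the bookkeeping becomes technical (this is precisely the heart of the arguments in \cite{MR1149891} and \cite{MR2496235}).

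\textbf{Uniqueness and preperiodicity.} Given realizability, I would upgrade the Thurston equivalence to an affine conjugacy by the standard rigidity argument for postcritically finite polynomials: the B\"ottcher coordinate at $\infty$ is canonical, external rays at rational angles land, and the landing pattern recorded by $\mathcal{A}$ pins down the locations of all critical points modulo the affine normalization (monic, centered). Two monic polynomials with the same critical portrait must therefore coincide, since their filled Julia sets admit a conjugacy that extends holomorphically across the basin of infinity and fixes the dynamics there. Finally, strict preperiodicity of the critical points is immediate from (CP~5): each critical point of $P$ carries the external angles $A_j$, whose images under $\mu^k$ are never in any $A_i$ by (CP~5) combined with the fact that $\mu$-periodicity on $\Q/\Z$ would be required for a periodic critical point of $P$.
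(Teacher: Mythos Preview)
The paper does not prove this theorem at all: it is stated as a result of Bielefeld--Fisher--Hubbard \cite{MR1149891} (with a remark that Poirier \cite{MR2496235} generalizes it), and no proof is given in the paper itself. So there is nothing to compare your argument against here.

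That said, your outline is a faithful sketch of the strategy actually used in \cite{MR1149891}: build a topological polynomial from the lamination determined by $\mathcal{A}$, then invoke Thurston's theorem and rule out Levy cycles using the gap condition (CP~6). You are also honest about where the real work lies (translating a putative obstruction into a violation of (CP~6)), which is indeed the technical core of the cited references. One small correction: in your final paragraph, strict preperiodicity of the critical points follows directly from (CP~5) alone (no angle in $A_j$ is $\mu$-periodic, hence no critical point can be periodic), and (CP~1) by itself does not make the postcritical set finite --- finiteness of $\A$ comes from the angles being rational together with (CP~5).
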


That a polynomial $P$ realizes a critical portrait means that for each
$A_j$ there is a critical point $c_j$ of $P$. The degree of $P$ at
$c_j$ is $\#A_j$. If $a\in A_j$ then the external ray $R(a)$ lands at
$c_j$. There is a generalization of the above theorem due to
Poirier\cite{MR2496235}.  

\section{Unmating the map}
\label{sec:find-polyn-mating}

A pseudo-equator for $f$ as in Definition~\ref{def:elementary_pseudo}
does not only guarantee that $f$ arises as a mating, but it is
possible to explicitly find the polynomials that when mating give the
map $f$. This is described here. 

\smallskip
Let $\CC\supset \post(f)$ be a pseudo-equator for the (rational,
postcritically finite) map $f\colon \CDach \to \CDach$, whose Julia
set is the whole sphere. 

Let $X^0_{w}, X^0_{b}$ be the two $0$-tiles (defined in terms of
$\CC$) which are colored white and black. We orient $\CC$, so that it
is positively oriented as boundary of (the white $0$-tile) $X^0_\wt$. 

Recall that the postcritical points divide $\CC$ into (closed)
$0$-edges $E_0, \dots, E_{k-1}$, which we label positively on $\CC$. 

Consider now a $1$-edge $E^1$. We say it is of \emph{type} $j$ if
$f(E^1)= E_j$. 
Each $0$-edge is deformed by $H_1$ into several $1$-edges. We record
how many $1$-edges of each type are contained in such a deformed
$0$-edge in the matrix $M=(m_{ij})$ defined as follows:
\begin{equation*}
  m_{ij} := \text{ number of $1$-edges of type $j$ contained in
    $H_1(E_i)$.}  
\end{equation*}
This matrix is called the \emph{edge replacement matrix} of the
pseudo-isotopy $H$. 
Since there are exactly $n$ $1$-edges of each type it follows that
$\sum_i m_{ij}=d=\deg f$. 
It is relatively easy to show that the matrix $M$ is \emph{primitive},
i.e., that $M^n> 0$ for some $n\in \N$. Thus it follows from the
Perron-Frobenius theorem that $d$ is a simple eigenvalue (which is in
fact the spectral radius of $M$), with eigenvector
$l=(l_j)>0$. We normalize $l$ by $\sum l_j=1$, this makes $l$
unique. 

To illustrate we consider the pseudo-isotopy shown in
Figure~\ref{fig:2}. Here the edge replacement matrix and the
corresponding eigenvector is
\begin{equation*}
  M= 
  \left(
  \begin{array}{ccc}
    2 & 2 & 1
    \\
    2 & 1 & 2
    \\
    0 & 1 & 1
  \end{array}
  \right),
  \quad
  l=\frac{1}{15}
  \left(
    \begin{array}{c}
      7 \\ 6 \\ 2
    \end{array}
    \right).
\end{equation*}

The vector $l$ describes the \emph{lengths} of the $0$-edges. This in
turn will be used to find the external angles at the $0$-vertices ($=$
postcritical points). More precisely for each $p\in \post(f)$ we will
define an external angle $\theta(p)$. 

\smallskip
The $0$-edges $E_j\subset \CC$ inherit the orientation of $\CC$. Let
$p_0, \dots, p_{k-1}$ be the postcritical points, labeled in
positively cyclical order on $\CC$, such that $p_0$ is the initial
point of (the first) $0$-edges $E_0$. 

Assume first that $p_0$ is a
fixed point of $f$ (as in the example from Figure~\ref{fig:2}). 
Then we set the external angle of $p_0$ equal to $0$, i.e.,
$\theta(p_0)=0$. The external angle of the other postcritical points
is now given by $\theta(p_1)=l_0$, $\theta(p_2)=l_0+l_1, \dots,
\theta(p_j)=l_0 + \dots + l_{j-1}$. Thus the difference between the
external angles of $p_j$ and $p_{j+1}$ is always given by $l_j$ (here
indices are taken $\bmod\, k$).  

Assume now that $p_0$ is mapped by $f$ to $p_j$. Let $l(p_0, p_j):= l_0
+ \dots + l_{j-1}$, i.e., the total length of all $0$-edges between
$p_0$ and $p_j$ (in the positive direction on $\CC$). We now desire
that $d\,\theta(p_0)= \theta(p_j) = \theta(p_0) + l(p_0, p_j)$. Thus we
define
\begin{equation*}
  \theta(p_0)= \frac{l(p_0,p_j)}{d-1},
\end{equation*}
and $\theta(p_1)= \theta(p_0) + l_0, \dots, \theta(p_i)= \theta(p_0) +
l_0 + \dots + l_{i-1}$. We will however only need $\theta(p_0)$ in the
following. 

\smallskip
We now assign external angles at the critical points. More precisely
we want to find the external angles at critical points of polynomials
$P_\wt, P_\bt$ (called the white/black polynomials). These are the
polynomials into which $f$ is unmated, i.e., $f$ will be
(topologically equivalent to) the mating of $P_\wt, P_\bt$. The
external angles will give the critical portraits of the polynomials
$P_\wt, P_\bt$. 

First we define
the length of a $1$-edge $E^1$ of type $j$ by $l(E^1)= l_j/d$. Note
that there are $d$ $1$-edges of each type, thus $\sum l(E^1)=1$ (where
the sum is taken over all $1$-edges). 

Denote by $\E^1$ the set of all $1$-edges. Since $H$ deforms
$\CC$ to $\CC^1= \bigcup \E^1$ it follows that the orientation of
$\CC$ together with $H$ induces a cyclical ordering $E^1_0, \dots ,
E^1_{kd -1}$ on the $1$-edges, as well as an orientation on each
$1$-edge. Here we start 
the labeling at $p_0$, i.e., the initial point of $E^1_0$ is
$p_0$. Note that the type of $1$-edges in this cyclical ordering is
changing cyclically, i.e., if $E^1_i$ is of type $j$, then $E^1_{i+1}$
is of type $j+1$ (here the lower index is taken $\bmod \,kd$, the type
is taken $\bmod \,k$). 

Assume the $1$-edge $E^1_j$ ends at the critical point $c$. Then an
external angle associated with $c$ is 
$$\theta(E^1_j):=\theta(p_0) + l(E^1_0) + \dots + l(E^1_{j}).$$
There are $\deg_f(c)$ such (oriented) $1$-edges
ending at $c$, hence different external angles associated to $c$. We
will have to 
decide, which belong to critical points of the white polynomial, and
which belong to critical points of the black polynomial. However, the
situation is more complicated: $c$ might be associated to several
distinct critical points of the white polynomial, as well as several
distinct critical points of the black polynomial. 

\smallskip
Recall that the set $\W$ was the set of preimages of the $1$-vertices
by $H_1$ located on $\CC$ (see
Definition~\ref{def:elementary_pseudo}\eqref{item:H2}). The points in
$\W$ divide $\CC$ into closed arcs, each of which is mapped by $H_1$
homeomorphically to a $1$-edge by $H_1$ (see
Definition~\ref{def:elementary_pseudo}~\eqref{item:H3}). Since there
are as many such arcs as $1$-edges (i.e., $kd$), there are $kd$ points
in $\W$. Let the points in $\W$ be $w_0,\dots, w_{kd-1}$ labeled
positively on $\CC$, such that $w_0=p_0$.

Consider $[c]:= (H_1)^{-1}(c)$, i.e., the set of points that are
deformed by $H$ to $c$. This set contains $\deg_f(c)$ points of $\W=
\{w_j\}\subset \CC$ (see Remark~\ref{rem:preH}). 
Recall from Lemma~\ref{lem:pseudo-equiv} that
this set is connected. To clarify: the set of points in $\CC$ that is
deformed by $H$ to $c$ (i.e., $[c]\cap \CC$) is finite, while the set
of \emph{all} points in $\CDach$ that is deformed by $H$ to $c$ (i.e.,
$[c]$) is infinite. 
 
Consider the components of $[c]\setminus
\CC$. Such a component $C$ is called white/black if it is contained in the
white/black $0$-tile (i.e., in $X^0_\wt$ or $X^0_\bt$). Furthermore we
call $C$ non-trivial if it contains at least two distinct points $w_i,
w_j\in \W$ in its boundary. Each non-trivial white component $C$ corresponds to
a critical point of the white polynomial, each non-trivial black
component $C$ corresponds to a critical point of the black polynomial.  

\smallskip
Let $C$ be a white non-trivial component. The set of external angles
associated to $C$ is now the set
\begin{equation*}
  \{\theta(E^1_j) \mid \text{$w_j\in \W$ contained in the closure of
    $C$}\}. 
\end{equation*}
The list of all these sets of external angles (for all critical points
$c$ and all white components of $[c]\setminus \CC$) forms the critical
portrait of the white polynomial $P_\wt$. 

The white critical portrait for the pseudo-equator indicated in
Figure~\ref{fig:2} is
\begin{equation*}
  \left\{\frac{7}{60}, \frac{22}{60}, \frac{37}{60} \right\},
  \left\{\frac{43}{60}, \frac{58}{60}\right\}.
\end{equation*}

\smallskip
The critical portrait of the black polynomial is constructed in almost
the same fashion. There is a slight difference however. This
difference appears, since in
the construction of mating a point with external angle $\theta$ in the
Julia set of one polynomial is identified with a point with external
angle $-\theta$ from the Julia set of the other polynomial. 

Thus let $C$ be a non-trivial black component of $[c]\setminus \CC$
(for some critical point $c$). Then the set of external angles
associated to $C$ is
\begin{equation*}
  \{1-\theta(E^1_j) \mid \text{$w_j\in \W$ contained in the closure of
    $C$}\}. 
\end{equation*}
The list of all these sets (for all critical points $c$ and all
non-trivial black components of $[c]\setminus \CC$) form the critical
portrait of the black polynomial $P_\bt$.

\smallskip
The black critical portrait of the pseudo-equator in
Figure~\ref{fig:2} is
\begin{equation*}
  \left\{\frac{15}{60}, \frac{30}{60}, \frac{45}{60}\right\},
  \left\{\frac{2}{60}, \frac{47}{60}\right\}.
\end{equation*}

Thus the critical portraits of the white and black polynomials can be
read off from the pseudo-isotopy in an elementary combinatorial
way. These determine the white and black polynomials $P_\wt, P_\bt$
uniquely. In \cite{inv_Peano} and \cite{exp_quotients} it is shown
that $P_\wt \mate P_\bt$ is topologically conjugate to $f$. Thus the
existence of a pseudo-equator not only shows that $f$ arises as a
mating, but allows to recover the polynomials into which $f$ unmates
in an elementary fashion. 

\section{Examples of unmatings}
\label{sec:examples-unmatings}

Here we show several examples of unmatings. 
% This serves to expose
% various phenomena that may happen.

\smallskip
We first show that shared matings are ubiquitous. We list all shared
matings of the example from Section~\ref{sec:an-example} that can be
found with the sufficient condition from
Section~\ref{sec:anoth-suff-crit}. 

\begin{figure}
  \centering
    \begin{overpic}
      [width=10cm, %grid, 
      tics =20]{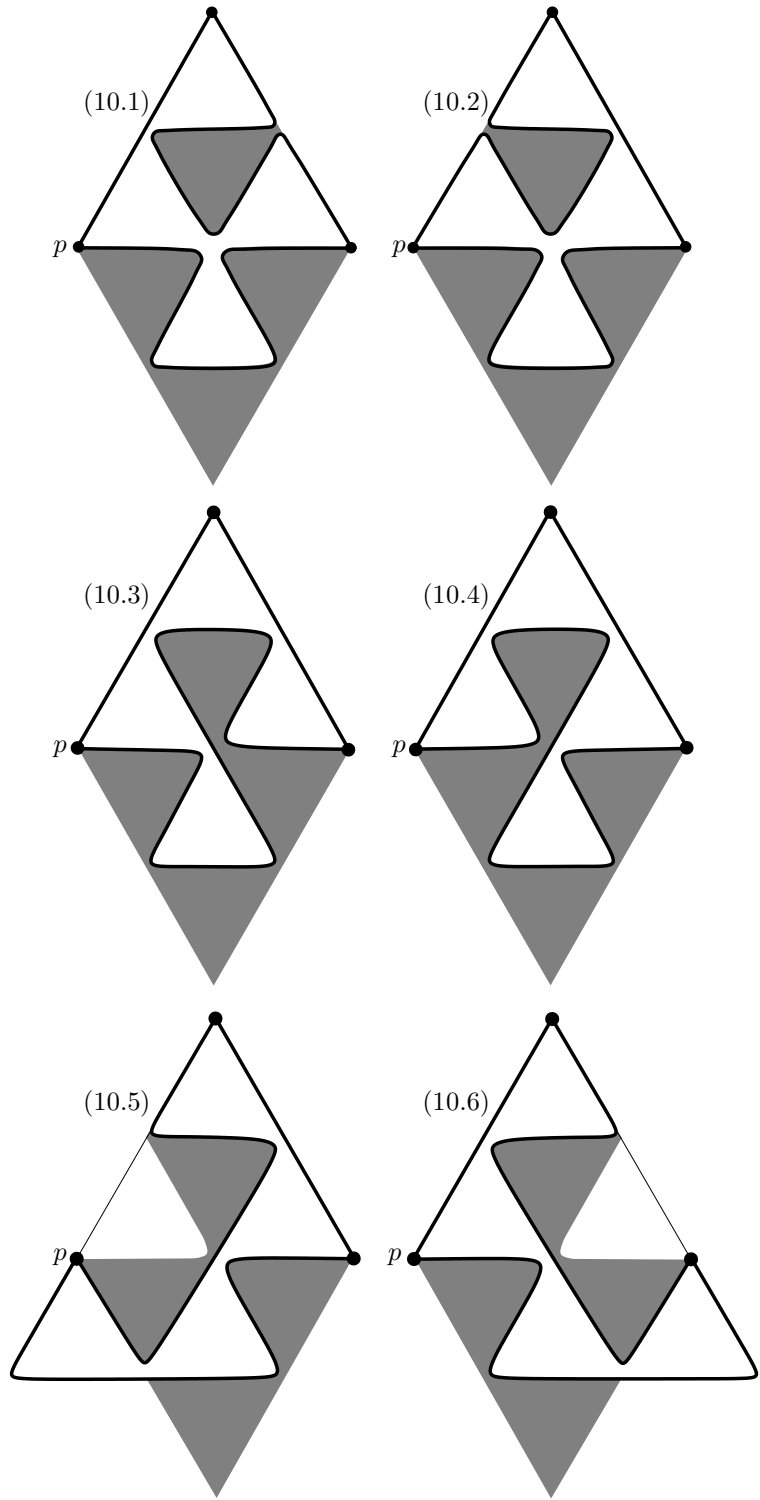}
      \put(5,93){\eqref{eq:unmate1}}
      \put(3,83.5){$p$}
      \put(27.7,93){\eqref{eq:unmate2}}
      \put(25.7,83.5){$p$}
      \put(5,60){\eqref{eq:unmate3}}
      \put(3,50){$p$}
      \put(27.7,60){\eqref{eq:unmate4}}
      \put(25.7,50){$p$}
      \put(5,26){\eqref{eq:unmate5}}
      \put(3,16){$p$}
      \put(27.7,26){\eqref{eq:unmate6}}
      \put(25.4,16){$p$}
    \end{overpic}
    \caption{Different pseudo-equators.}
    \label{fig:pseudo-equators}
\end{figure}

The critical portraits of the polynomials into which the map $f$
unmates are as follows. They are obtained from the pseudo-equators
shown in Figure~\ref{fig:pseudo-equators}. 
It is convenient to write the common denominator
of the angles of a critical portrait outside the parentheses, i.e.,
we write $\frac{1}{N}\{n,m\},\{i,j\}$ for 
$\{n/N, m/N\}, \{i/N, j/N\}$ and so on. By $\wt$ we denote 
the critical portrait of the white polynomial, by $\bt$ the critical 
portrait of the (corresponding) black polynomial into which $f$
unmates. 

\begin{align}
  \label{eq:unmate1}
  &\wt: \frac{1}{60} \{7,22,37\}, \{43, 58\} 
  &\bt&: \frac{1}{60} \{2,47\}, \{15, 30, 45\}
  \\
  \label{eq:unmate2}
  &\wt: \frac{1}{20} \{2,7,17\},\{10, 15\} 
  &\bt&: \frac{1}{20} \{2,7, 17\}, \{10, 15\}  
  \\
  \label{eq:unmate3}
  &\wt: \frac{1}{60} \{11,26\}, \{29,59\}, \{30, 45\} 
  &\bt&:\frac{1}{60} \{1,46\}, \{19, 34\}, \{15, 45\}  
  \\
  \label{eq:unmate4}
  &\wt: \frac{1}{60} \{14,59\}, \{26,41\}, \{15, 45\} 
  &\bt&: \frac{1}{60} \{1,31\}, \{15, 30\},\{34, 49\}  
  \\
  \label{eq:unmate5}
  &\wt: \frac{1}{20} \{3,18\}, \{7,17\}, \{10, 15\} 
  &\bt&: \frac{1}{20} \{3,18\},  \{7, 17\}, \{10, 15\}  
  \\  
  \label{eq:unmate6}
  &\wt: \frac{1}{60} \{7,37\}, \{15,30\}, \{43, 58\} , 
  &\bt&: \frac{1}{60} \{2,47\}, \{15, 45\}, \{23, 38\}  
\end{align}

The pseudo-equators in Figure~\ref{fig:pseudo-equators} are not all
possible, all others however are obtained from these by rotation (by
$2\pi/3$ and $4\pi/3$). In the above the point labeled $p$ in
Figure~\ref{fig:pseudo-equators} is the point $-1$. We now rotate
each pseudo-equator by $2\pi/3$, meaning the point labeled
$p$ is $1$. From these new pseudo-equators we obtain unmatings of $f$
into polynomials with the following critical portraits.

\begin{align}
  \label{eq:unmate7}
  \tag{\ref{eq:unmate1}'}
  &\wt: \frac{1}{60} \{2,47\}, \{15, 30, 45\}
  &\bt&: \frac{1}{60} \{7,22,37\}, \{43, 58\} 
  \\
  \label{eq:unmate8}
  \tag{\ref{eq:unmate2}'}
  &\wt: \frac{1}{60} \{13,58\},\{15,30,45\} 
  &\bt&: \frac{1}{60} \{2, 17\}, \{23, 38, 53\}  
  \\
  \label{eq:unmate9}
  \tag{\ref{eq:unmate3}'}
  &\wt: \frac{1}{20} \{1,11\}, \{5,10\}, \{14, 19\} 
  &\bt&:\frac{1}{20} \{1,11\}, \{5, 10\}, \{14, 19\}  
  \\
  \label{eq:unmate10}
  \tag{\ref{eq:unmate4}'}
  &\wt: \frac{1}{20} \{1,6\}, \{9,19\}, \{10, 15\} 
  &\bt&:\frac{1}{20} \{1,6\}, \{9, 19\}, \{10, 15\}  
  \\
  \label{eq:unmate11}
  \tag{\ref{eq:unmate5}'}
  &\wt: \frac{1}{60} \{13,58\}, \{15,45\}, \{22, 37\} 
  &\bt&:\frac{1}{60} \{2,17\}, \{23, 53\}, \{30, 45\}  
  \\
  \label{eq:unmate12}
  \tag{\ref{eq:unmate6}'}
  &\wt: \frac{1}{60} \{2,47\}, \{15,45\}, \{23, 38\} 
  &\bt&:\frac{1}{60} \{7,37\}, \{15, 30\}, \{43, 58\}  
\end{align}

Finally we rotate the pseudo-equators in
Figure~\ref{fig:pseudo-equators} by $4\pi/3$, meaning that the point
labeled $p$ is $\infty$. We obtain unmatings of the map $f$ into
polynomials with the following critical portraits. 

\begin{align}
  \label{eq:unmate13}
  \tag{\ref{eq:unmate1}''}
  &\wt: \frac{1}{20} \{3,13, 18\}, \{5, 10\}  
  &\bt&: \frac{1}{20} \{3,13, 18\}, \{5, 10\}  
  \\
  \label{eq:unmate14}
  \tag{\ref{eq:unmate2}''}
  &\wt: \frac{1}{60} \{2,17\},\{23,38,53\} 
  &\bt&: \frac{1}{60} \{13, 58\}, \{15, 30, 45\}  
  \\
  \label{eq:unmate15}
  \tag{\ref{eq:unmate3}''}
  &\wt: \frac{1}{60} \{1,46\}, \{15,45\}, \{19,34\} 
  &\bt&:\frac{1}{60}\{11,26\}, \{29, 59\}, \{30, 45\}  
  \\
  \label{eq:unmate16}
  \tag{\ref{eq:unmate4}''}
  &\wt: \frac{1}{60} \{1,31\}, \{15,30\}, \{34, 49\} 
  &\bt&:\frac{1}{60}\{14,59\}, \{15, 45\}, \{26, 41\}  
  \\
  \label{eq:unmate17}
  \tag{\ref{eq:unmate5}''}
  &\wt: \frac{1}{60} \{2,17\}, \{23,53\}, \{30, 45\} 
  &\bt&:\frac{1}{60}\{13,58\}, \{15, 45\}, \{22, 37\}  
  \\
  \label{eq:unmate18}
  \tag{\ref{eq:unmate6}''}
  &\wt: \frac{1}{20} \{2,17\}, \{3,13\}, \{5, 10\} 
  &\bt&:\frac{1}{20}\{2,17\}, \{3, 13\}, \{5, 10\}  
\end{align}

% \begin{align}
%   &\wt: \frac{1}{60} \{7,22,37\}, \{43, 58\} 
%     &\bt \qquad\frac{1}{60}\{2,47\},
%   \quad\frac{1}{60} \{15, 30, 45\} 
%   \\ \notag
%   \\ 
%     &\text{b) white portrait} \qquad\frac{1}{20}\{2,7,17\},
%   \quad\frac{1}{20} \{10, 15\}
%   \\
%   \notag
%     &\text{\phantom{b)} black portrait} \qquad\frac{1}{20}\{2,7, 17\},
%   \quad\frac{1}{20} \{10, 15\} 
%   \\ \notag
%   \\
%   &\text{c) white portrait} \qquad\frac{1}{60}\{11,26\}, \quad \frac{1}{60}\{29,59\},
%   \quad\frac{1}{60} \{30, 45\}
%   \\
%   \notag
%     &\text{\phantom{c)} black portrait} \qquad\frac{1}{60}\{1,46\},
%   \quad\frac{1}{60} \{19, 34\}, \quad\frac{1}{60}\{15, 45\} 
%   \\ \notag
%   \\   
%   &\text{d) white portrait} \qquad\frac{1}{60}\{14,59\}, \quad \frac{1}{60}\{26,41\},
%   \quad\frac{1}{60} \{15, 45\}
%   \\
%   \notag
%     &\text{\phantom{d)} black portrait} \qquad\frac{1}{60}\{1,31\},
%   \quad\frac{1}{60} \{15, 30\}, \quad\frac{1}{60}\{34, 49\} 
%   \\ \notag
%   \\   
%   &\text{e) white portrait} \qquad\frac{1}{20}\{3,18\}, \quad \frac{1}{20}\{7,17\},
%   \quad\frac{1}{20} \{10, 15\}
%   \\
%   \notag
%     &\text{\phantom{e)} black portrait} \qquad\frac{1}{20}\{3,18\},
%   \quad\frac{1}{20} \{7, 17\}, \quad\frac{1}{20}\{10, 15\} 
%   \\ \notag
%   \\   
%   &\wt: \frac{1}{60} \{7,37\}, \{15,30\},
%    \{43, 58\} , \quad
%     \bt: \frac{1}{60} \{2,47\}, \{15, 45\}, \{23, 38\}  
% \end{align}

Here are some observations from these examples. Several, namely
\eqref{eq:unmate2}, \eqref{eq:unmate5}, \eqref{eq:unmate9},
\eqref{eq:unmate10}, \eqref{eq:unmate13}, \eqref{eq:unmate18}, are
obtained by mating the same white polynomial to the same black
polynomial. 

Somewhat more interesting (and possibly surprising) is the following
phenomenon. Consider the pairs \eqref{eq:unmate1} and
\eqref{eq:unmate7}, \eqref{eq:unmate6} and \eqref{eq:unmate12},
\eqref{eq:unmate3} and \eqref{eq:unmate15}, \eqref{eq:unmate4} and
\eqref{eq:unmate16}, as well as \eqref{eq:unmate8} and
\eqref{eq:unmate14}. In each of these cases the map $f$ is unmated
into the same two polynomials. The pseudo-equators however by which
these unmatings are achieved are distinct. Thus the corresponding
invariant Peano curves are distinct. Put differently, $f$ can be
obtained as the mating of two polynomials $P_\wt, P_\bt$ in two
distinct ways. Note that the role of the black and white polynomials
however is interchanged in each case. 

\smallskip
We next consider some unmatings of the map $g\colon \CDach \to \CDach$
given by
\begin{equation}
\label{eq:defg}
  g(z) = 1 + \frac{\omega-1}{z^3},
\end{equation}
where $\omega= \exp(4\pi i/3)$. Note that $g(z) = \tau(z^3)$, where
$\tau(w)= 1 + (\omega-1)/w$ is a M\"obius transformation that maps the
upper half plane to the half plane above the line through $\omega, 1$
(more precisely $\tau: 0\mapsto \infty , 1 \mapsto \omega,
\infty\mapsto 1$). The critical points of $g$ are $0, \infty$, which
are mapped as follows.

\begin{equation*}
  \xymatrix{
    0 \ar[r]^{3:1} & \infty \ar[r]^{3:1} & 1 \ar[r] & \omega \ar@(r,u)[]
  }
\end{equation*}

Thus $\post(g)=\{\infty,1,\omega\}$, i.e., $g$ is postcritically
finite. Since each critical point is strictly preperiodic, it follows
that the Julia set of $g$ is all of $\CDach$. The crucial property for
our purposes is that $\infty$ is at the same time a critical as well
as a postcritical point. 

As $\CC\supset \post(g)$ we choose the extended line through
$\omega,1$, i.e., the circle on $\CDach$ through $\omega, 1, \infty$,
oriented positively as boundary of the half plane above this
line. It holds
\begin{equation*}
  g^{-1}(\CC)= \bigcup_{j=0, \dots, 5} R_j,
\end{equation*}
where $R_j= \{r \exp(2\pi i j/6) \mid 0\leq r\leq \infty\}$. 

\begin{figure}
  \centering
  \begin{overpic}
    [width=10cm, %grid, 
    tics =20]{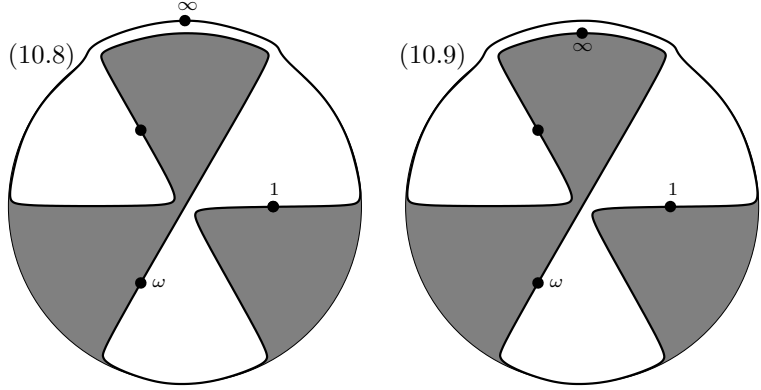}
    \put(75,44.5){${\scriptstyle \infty}$}
    \put(22.5,50){${\scriptstyle \infty}$}
    \put(19.2,13.1){${\scriptstyle \omega}$}
    \put(72,13.1){${\scriptstyle \omega}$}
    \put(34.7,25.4){$\scriptstyle 1$}
    \put(87.7,25.4){$\scriptstyle 1$}
    \put(0,43){\eqref{eq:eq1_g}}
    \put(52,43){\eqref{eq:eq2_g}}
  \end{overpic}
  \caption{Shared invariant Peano curve.}
  \label{fig:6}
\end{figure}
 
We show two pseudo-equators for $g$ in Figure~\ref{fig:6}. In both
cases the points on the circle all represent the point $\infty$. The
narrow white channel indicates that two white $1$-tiles are connected
at $\infty$ (in both cases). The
connection of $1$-tiles is the same in \eqref{eq:eq1_g} and
\eqref{eq:eq2_g}, with one exception: the marking of the connection at
$\infty$ (more precisely the cnc-partition representing the connection
at $\infty$) differ. This means that in the geometric representation
of Figure~\ref{fig:6} the point $\infty$ is in different
positions. From these pseudo-equators we obtain unmatings of $g$ into
polynomials with the following critical portraits. 
\begin{align}
  \label{eq:eq1_g}
  &\wt: \frac{1}{27} \{4, 22\}, \{12,21\}  
  &\bt: \frac{1}{27} \{5,14\}, \{15,24\} 
  \\
  \label{eq:eq2_g}
  &\wt: \frac{1}{27} \{7, 25\}, \{12,21\}  
  &\bt: \frac{1}{27} \{2,11\}, \{15,24\} 
\end{align}

\begin{remark}
  At first the author thought that this example would result in two
  distinct pairs of polynomials that would do not only yield the same
  rational map when mated, but also the same invariant Peano
  curve. This however is not true. We do not give the precise argument
  here. This can however be found in \cite[Section 8]{inv_Peano}. 
\end{remark}

\section{A mating not arising from a pseudo-equator}
\label{sec:mating-not-arising}

In this section we give an example of a mating that does not arise via
a pseudo-equator. Thus we show that the sufficient condition that a
rational map arises as a mating given in Theorem~\ref{thm:mating_suff}
is not necessary. 

\smallskip
More precisely we will mate two postcritically finite polynomials and
show that the resulting rational map has no pseudo-equator. The Julia
set of this map will be the whole sphere.

\smallskip
In the construction of the topological mating a point in the white
Julia set where the external ray $R_\theta$ lands is identified with a
point in the black Julia set where the external ray $R_{-\theta}$
lands. To make the following more readable we define for any $\theta
\in [0,1)$ $\theta^*:= 1- \theta$. Thus in the topological mating the
landing point of $R_\theta$ in the white Julia set is identified with
the landing point of $R_{\theta^*}$ in the black polynomial. 

\smallskip
The white polynomial will be $P_\wt:= z^2 +i$. It is well-known that the
critical portrait of $P_\wt$ is $\{1/12, 7/12\}$. The external angles
are mapped under the angle doubling map in the following way:

\begin{equation*}
  \xymatrix {
    (1/12), (7/12) \ar[r] & (1/6) \ar[r]  & (1/3) \ar[r] &
    (2/3)\ar@/_1pc/[l]
  }.
\end{equation*}

Furthermore at the
$\alpha$-fixed point of $P_\wt$ the external rays with angles $\{1/7,
2/7, 4/7\}$ land. 

As the black polynomial $P_\bt$ we pick the
(quadratic, monic, postcritically finite) polynomial with the critical
portrait $\{(1/28)^*, (15/28)^*\}$. Thus $P_\bt= z^2+ c_\bt$, where $c_\bt$ is
the landing point of the external ray $R_{(1/14)^*}$ in the Mandelbrot
set. The external angles are mapped under angle doubling in the
following way:

\begin{equation*}
  \xymatrix {
    (1/28)^*, (15/28)^* \ar[r] & (1/14)^* \ar[r]  & (1/7)^* \ar[r] &
    (2/7)^* \ar[r] & (4/7)^* \ar@/_1.5pc/[ll]
  }.
\end{equation*}

\begin{lemma}
  \label{lem:matepwb}
  The topological mating of $P_\wt, P_\bt$ exists and is topologically
  conjugate to a rational map $f\colon \CDach \to \CDach$ of degree
  $2$. The two critical points $a,b$ of $f$ are mapped by $f$ in the
  following way:
  \begin{align*}
    &\xymatrix{
      a \ar[r]^{2:1} & p_1 \ar[r] & p_2 \ar@(r,u)[]
    }
    \\
    &\xymatrix{
      b \ar[r]^{2:1} & q_1 \ar[r] & q_2 \ar[r] & q_3 \ar@/_1pc/[l]
    }.
  \end{align*}
\end{lemma}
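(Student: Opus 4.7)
The plan is to combine the Rees--Shishikura--Tan existence theorem with a direct orbit chase through the ray equivalence. First I would verify the RST hypothesis: the critical value $i$ of $P_\wt$ has dynamical angle $1/6\in(1/7,2/7)$, so $c_\wt$ lies in the $1/3$-limb of the Mandelbrot set, while $c_\bt$ is the landing point of the parameter ray at angle $(1/14)^*=13/14\in(13/15,14/15)$, placing $c_\bt$ in the $3/4$-limb; the conjugate of the $1/3$-limb is the $2/3$-limb, so $c_\wt$ and $c_\bt$ are not in conjugate limbs. Theorem~\ref{thm:matingdeg2} then produces a postcritically finite rational map $f\colon\CDach\to\CDach$ of degree $2$ topologically conjugate to $P_\wt\mate P_\bt$. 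Since $f$ has degree $2$, Riemann--Hurwitz forces exactly two simple critical points, which can only be the quotient images $a:=[0_\bt]$ and $b:=[0_\wt]$ of the polynomial critical points.

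For the orbit of $a$, I would track the $P_\bt$-critical orbit $0\to c_\bt\to\beta_1\to\beta_2\to\beta_3\to\beta_1$ in the quotient, where $\beta_1,\beta_2,\beta_3$ are the landing points in $\J(P_\bt)$ of $R_{6/7},R_{5/7},R_{3/7}$. The identification $\sigma_\wt(\theta)\sim\sigma_\bt(\theta^*)$ identifies each $\beta_j$ with the common landing point $\alpha_\wt\in\J(P_\wt)$ of $R_{1/7},R_{2/7},R_{4/7}$. Thus $\alpha_\wt,\beta_1,\beta_2,\beta_3$ all lie in a single equivalence class $C$, which is fixed by $f$ because $\alpha_\wt$ is fixed by $P_\wt$. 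This gives $a\to[c_\bt]\to C\to C$ with $p_1:=[c_\bt]$ and $p_2:=C$.

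For the orbit of $b$, I would similarly chase the $P_\wt$-critical orbit $0\to i\to -1+i\to -i\to -1+i$, where the $2$-cycle $-1+i\leftrightarrow -i$ matches the angle $2$-cycle $1/3\leftrightarrow 2/3$, with $\sigma_\bt(2/3)\in[-1+i]$ and $\sigma_\bt(1/3)\in[-i]$ by identification. The crucial check is that these two classes remain distinct: since $P_\bt$ lies in the $3/4$-limb rather than the $1/2$-limb, the $\alpha$-fixed point of $P_\bt$ carries its four landing rays in a $4$-cycle, so $R_{1/3}$ and $R_{2/3}$ must land at two distinct period-$2$ points of $\J(P_\bt)$. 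A finite enumeration using Douady's lemma then shows no further rays land at $\pm 1\mp i$ in $\J(P_\wt)$ or at $\sigma_\bt(1/3),\sigma_\bt(2/3)$ in $\J(P_\bt)$, so the classes $[-1+i],[-i]$ have the two representatives listed and form a genuine $2$-cycle under $f$. Setting $q_1:=[i]$, $q_2:=[-1+i]$, $q_3:=[-i]$ then gives the claimed orbit with $q_2\leftrightarrow q_3$.

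The main obstacle is this last non-collapse verification. One must rule out every additional chain of ray identifications that could merge $[-1+i]$ with $[-i]$, or either with the fixed class $C$. This is essentially a finite combinatorial check once the limb of $P_\bt$ is pinned down by its parameter angle $13/14$, but it is the place where the specific combinatorics of the two chosen polynomials enters in a substantive way.
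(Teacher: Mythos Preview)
Your approach is essentially the same as the paper's: apply Rees--Shishikura--Tan for existence, then chase the two critical orbits through the ray equivalence, verifying that no unintended collapses occur. Your labeling of $a,b,p_i,q_j$ matches the paper's.

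There is one genuine difference worth noting. To show that $R_{(1/3)^*}$ and $R_{(2/3)^*}$ land at distinct points of $\J(P_\bt)$, the paper argues that otherwise the four rays $R_{1/3},R_{2/3}$ (in $\J_\wt$) and $R_{(1/3)^*},R_{(2/3)^*}$ (in $\J_\bt$) would disconnect the formal mating sphere, contradicting RST. Your argument is sharper: you pin down that $c_\bt$ lies in the $3/4$-limb (since $13/14\in(13/15,14/15)$), so $\alpha_\bt$ carries a $4$-cycle of rays and cannot receive the period-$2$ rays $R_{1/3},R_{2/3}$; since $\beta_\bt$ receives only $R_0$, the two rays must land at a genuine $2$-cycle. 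This limb argument is cleaner and more direct than the paper's separation argument. The paper, on the other hand, is more systematic about the remaining non-collapse checks you flag as ``the main obstacle'': it explicitly verifies that only one ray lands at $\sigma_\bt((1/6)^*)$, that only single rays land at the points $\sigma_\bt((k/7)^*)$, and traces the extra rays $R_{9/14},R_{11/14}$ at the pre-$\alpha$ point of $P_\wt$ to confirm that $p_1=[c_\bt]$ is not further identified. You would need to fill in these checks along the same lines.
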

Thus $f$ has $5$ postcritical points, namely $p_1,p_2, q_1, q_2,
q_3$. The Julia set of $f$ is the whole sphere, since each critical
point is strictly preperiodic.

\begin{proof}
  It is well-known that $i$, i.e., the critical value of $P_\wt$ is in
  the $1/3$-limb of the Mandelbrot set. Indeed the external angle at
  the critical value $i$ is $1/6$. The external rays $R_{1/7}$
  and $R_{2/7}$ disconnect the $1/3$-limb from the main cardioid. Since
  $1/7< 1/6 < 2/7$ it follows that $i$ lies in this limb. 

  The conjugate limb is the $2/3$-limb, it is disconnected from the
  main cardioid by the external rays $R_{5/7}, R_{6/7}$. Since
  $(1/14)^*= (13/14)> 6/7$, it follows that $i, c_\bt$ do not lie in
  conjugate limbs of the Mandelbrot set. From the Rees-Shishikura-Tan
  theorem (Theorem~\ref{thm:matingdeg2}) it follows that the
  topological mating of $P_\wt, P_\bt$ exists and is topologically
  conjugate to a rational map (which is postcritically finite and of
  degree $2$).   

  \smallskip
  Clearly the three postcritical points of $P_\bt$ with external angles
  $(1/7)^*, (2/7)^*$, $(4/7)^*$ are all identified with the
  $\alpha$-fixed point of $P_\wt$ under the topological mating, i.e.,
  correspond to a single postcritical point in the mating. 
  It remains to show that there are not more identifications of
  postcritical points. 

  \smallskip
  Consider first the external angle $1/3$. Let $z_{(1/3)^*}$ be the
  landing point of $R_{(1/3)^*}$ in the Julia set $\J_\bt$ of
  $P_\bt$. We want to show that $R_{(1/3)^*}$ is the only external
  ray landing at $z_{(1/3)^*}$. Indeed $(1/3)^*$ is fixed under
  the second iterate of the angle-doubling map. Thus if $R_{\theta^*}$
  lands at $z_{(1/3)^*}$ it has to be fixed under the second iterate
  of the angle doubling map as well. Thus either $\theta^*= (2/3)^*$
  or $\theta^* = 0$. In the first case the external rays $R_{1/3},
  R_{2/3}$ in the dynamical plane of $P_\wt$ together with the
  external rays $R_{(1/3)^*}, R_{(2/3)^*}$ disconnect the formal
  mating sphere. This cannot happen by the Rees-Shishikura-Tan
  theorem. The second case cannot happen, since for quadratic
  polynomials the landing point of $R_0$, i.e., the $\beta$-fixed
  point, is not the landing point of any other external ray.  

  By the same argument it follows that the landing point of $R_{(2/3)^*}$
  (in the dynamical plane of $P_\bt$) is not the landing point of any
  other external ray. 

  \smallskip
  Consider now the landing point $z_{(1/6)^*}$ of $R_{(1/6)^*}$, in
  the dynamical plane of $P_\bt$. This point is not a critical point
  which is mapped to $z_{(1/3)^*}$ by $P_\bt$. Since by the above there is a
  single external ray landing at $z_{(1/3)^*}$ it follows that there is
  a single external ray landing at $z_{(1/6)^*}$. 

  Thus it follows that the three postcritical points of $P_\wt$
  descend to three distinct postcritical points $q_1,q_2,q_3$ of $f$
  and their dynamics is as described in the statement. 

  \medskip
  It remains to show that the postcritical point of $P_\bt$ at which
  the external ray $R_{(1/14)^*}$ (in the dynamical plane of $P_\bt$)
  lands is not identified with other postcritical or critical points. 

  \smallskip
  We first note that at the three postcritical points of $P_\bt$ at
  which the external rays $R_{(1/7)^*}, R_{(2/7)^*}, R_{(4/7)^*}$ land
  no other external ray lands.  Indeed any other such external ray
  would be a fixed point of the third iterate of the angle doubling
  map, i.e., has to be an external ray of the form
  $R_{(k/7)^*}$, where $k=3,5,6$. 
  Assume first that $R_{(1/7)^*}, R_{(6/7)^*}$ land at the same
  point. This is impossible, since the external rays $R_{(1/28)^*},
  R_{(15/28)^*}$ (which both land at the critical point of $P_\bt$)
  are not contained in one component of $\C\setminus (R_{(1/7)^*} \cup
  R_{(6/7)^*}$. The same argument shows that $R_{(1/7)^*},
  R_{(5/7)^*}$ cannot land at the same point. Finally assume that
  $R_{(1/7)^*}, R_{(3/7)^*}$ land at the same point. Mapping these
  external rays by $P_\bt$ yields that $R_{(2/7)^*}, R_{(6/7)^*}$ land
  at the same point. Again this is impossible, since $R_{(1/28)^*}
  \cup R_{(15/28)^*}$ do not lie in the same component of $\C\setminus
  (R_{(2/7)^*}\cup R_{(6/7)^*})$. 

  \smallskip
  Consider the landing point $z_{1/14}$ of the external ray
  $R_{1/14}$ in the dynamical plane of $P_\wt$. This is the preimage
  of the $\alpha$-fixed point of $P_\wt$. Thus the external rays
  $R_{9/14}, R_{11/14}$ (in the dynamical plane of $P_\wt$) land at
  $z_{1/14}$ as well.  

  Consider now one the corresponding external rays $R_{(9/14)^*},
  R_{(11/14)^*}$ in the dynamical plane of $R_\bt$. These are mapped
  by $P_\bt$ to $R_{(2/7)^*}, R_{(4/7)^*}$. By the above there is no
  other external ray landing at these endpoints. Thus there are no
  other external rays landing at the same points as $R_{(9/14)^*},
  R_{(11/14)^*}$. 

  \smallskip
  Thus the $4$ postcritical points of $P_\bt$ descend to $2$
  postcritical points $p_1, p_2$ of $f$, which are mapped as stated. 
\end{proof}

\begin{theorem}
  \label{thm:fnomating}
  The rational map $f\colon \CDach \to \CDach$ from
  Lemma~\ref{lem:matepwb}, which arises as the mating of $P_\wt$ and
  $P_\bt$, does not have a pseudo-equator (as in
  Definition~\ref{def:elementary_pseudo}).  
\end{theorem}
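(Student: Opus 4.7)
The plan is to argue by contradiction using Lemma~\ref{lem:equ_conn_pseudo_eq}: if $f$ admits a pseudo-equator $\CC \supset \post(f)$, then there is a connection of $1$-tiles whose white connection graph is a spanning tree and whose outline is orientation-preserving isotopic to $\CC$ rel.\ $\post(f)$. I would first assemble the combinatorial skeleton. Since $\deg f = 2$ and $\#\post(f) = 5$, there are exactly two white $1$-tiles $W_1, W_2$ and two black $1$-tiles $B_1, B_2$, ten $1$-edges, and eight $1$-vertices, of which only the critical points $a, b$ carry a nontrivial cnc-partition choice (four tiles meet at each). With only two white tile-vertices in the white connection graph, the spanning-tree requirement forces the whites to be joined by exactly one block of size two at precisely one of $a, b$, and all other blocks must be singletons.

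Next I would exploit the distinguished postcritical point $q_3$. By the construction in Lemma~\ref{lem:matepwb}, $q_3$ is the mating image of the $P_\wt$-fixed point $\alpha_\wt$ (rays $R_{1/7}, R_{2/7}, R_{4/7}$, combinatorial rotation $1/3$) glued to three \emph{distinct} $P_\bt$-landing points of $R_{6/7}, R_{5/7}, R_{3/7}$ (cyclically permuted by $P_\bt$ with rotation $2/3$). The ray-equivalence class of $q_3$ in the formal mating sphere is thus a tree with seven vertices and six edges, prescribing a definite cyclic ordering of six sectors around $q_3$ in $\CDach$, with $\wt$- and $\bt$-branches interleaved. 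Translating the existence of the putative pseudo-isotopy $H$ into a local constraint at $q_3$ — and combining it with the behavior at the two $f$-preimages $q_2, q_3$ of $q_3$ via the lift $\widetilde{H}$, in the spirit of the proof of Theorem~\ref{thm:P3mate} — restricts the admissible connections very strongly.

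The contradiction I plan to extract is an orientation/winding mismatch in the spirit of Lemma~\ref{lem:Hdeformswb}: the rotation-number discrepancy ($1/3$ versus $2/3$) at $q_3$, propagated through the forward orbit $q_1 \to q_2 \to q_3$, will force the color of some $1$-tile near $q_3$ to disagree with that of the $0$-tile containing its $H$-preimage, contradicting Lemma~\ref{lem:Hdeformswb}\,(\ref{item:Horient3}) and hence the orientation-preserving clause~(\ref{item:H4}) of Definition~\ref{def:elementary_pseudo}. As a fallback, I would directly enumerate the finitely many isotopy classes of Jordan curves through the five postcritical points, combine each with the handful of cnc-partition choices at $a, b$, and run the algorithm of Section~\ref{sec:find-polyn-mating} to rule out each surviving case. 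The main obstacle is making the winding argument at $q_3$ precise: $q_3$ is not a critical point, so its naive local $1$-tile picture is deceptively simple, and the obstruction is hidden inside the interaction of the mating's ray-equivalence tree at $q_3$ with the orientation-preserving pseudo-isotopy along the whole forward orbit.
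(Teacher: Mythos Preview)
Your primary line of attack has a genuine gap. The ray-equivalence tree at $q_3$ and the $1/3$ versus $2/3$ rotation numbers are artifacts of the mating construction of $f$; once you pass to the rational map itself, $q_3$ is simply a non-critical $1$-vertex where exactly two $1$-edges meet and the cnc-partition is trivial. None of the mating's ray structure is visible in the tile combinatorics of $f$ relative to an arbitrary Jordan curve $\CC\supset\post(f)$, so there is no mechanism by which the rotation discrepancy could surface as a violation of Lemma~\ref{lem:Hdeformswb}\,(\ref{item:Horient3}). You yourself flag this as ``the main obstacle,'' and I do not see a way to close it: the pseudo-equator question is intrinsic to $f$, and $f$ does not remember which pair of polynomials produced it.

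Your fallback is closer in spirit to what actually works, but it is misstated and over-engineered. There are \emph{infinitely} many isotopy classes rel.\ $\post(f)$ of Jordan curves through five marked points on a sphere (Dehn twists), so you cannot enumerate them. What the paper does instead is enumerate the finitely many \emph{cyclic orders} of the five postcritical points along $\CC$ and show that none of them can be preserved by the pseudo-isotopy $H$. The key observations are elementary: since $\deg f=2$, the two preimages of any non-critical-value postcritical point lie in different white $1$-tiles, so connecting the whites at $a$ or $b$ forces certain postcritical points to become non-adjacent on $\CC^1$ (ruling out $p_1,p_2$ adjacent and $q_1,q_3$ adjacent); a successor argument rules out $p_2,q_2$ adjacent; and the two remaining cyclic orders are killed by tracking where $q_2$ (a preimage of $q_3$) must sit on $\CC^1$. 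No critical-portrait computation or Section~\ref{sec:find-polyn-mating} machinery is needed---just the Riemann--Hurwitz count and the orbit diagram of Lemma~\ref{lem:matepwb}.
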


\begin{proof}
  To show that no
  pseudo-equator as in Definition~\ref{def:elementary_pseudo} exists,
  we will consider several cases. Namely we consider the different
  cyclical orders in which $\CC$ may traverse the postcritical
  points. Assume a pseudo-isotopy $H\colon \CDach \times [0,1]\to
  \CDach$ deforms $\CC$ to $\CC^1= f^{-1}(\CC)$ as in
  Definition~\ref{def:elementary_pseudo}. Then $H$ induces an order in
  which $\CC^1$ is traversed. The cyclical order in which $\CC^1$
  traverses the postcritical points has to agree with the cyclical
  order of the postcritical points on $\CC$, since $H$ is a
  pseudo-isotopy rel.\ $\post(f)$. We will show that this is
  impossible. 

  \smallskip
  Thus let $\CC\subset \CDach$ be a Jordan curve with
  $\post(f)\subset \CC$, which is (arbitrarily) oriented. The white
  and black $0$-tiles $X^0_{\wt}, X^0_\bt$, as well as the white and
  black $1$-tiles are defined in terms of $\CC$ as usual. There are
  two white (as well as two black) $1$-tiles, which intersect at the
  critical points $a,b$. To obtain a connection of white $1$-tiles
  that yields a spanning tree, we have to connect the two white
  $1$-tiles either at $a$ or at $b$. Recall that each white $1$-tile
  is mapped homeomorphically to $X^0_\wt$ by $f$. Thus the two
  preimages of any point $p_2, q_2, q_3$, i.e., the postcritical points
  which are not critical values, have to lie in distinct white
  $1$-tiles.

  \smallskip
  Let $q_2'$ be the preimage of $q_3$ by $f$ distinct from
  $q_2$. Augmenting this point to the diagram in
  Lemma~\ref{lem:matepwb} we obtain that the points $a,b, p_1, p_2,
  q_1,q_2, q_2', q_3$ are mapped by $f$ as follows:
  \begin{align*}
    &\xymatrix{
      a \ar[r]^{2:1} & p_1 \ar[r] & p_2 \ar@(r,u)[]
    }
    \\
    &\xymatrix{
      b \ar[r]^{2:1} & q_1 \ar[r] & q_2 \ar[r] & q_3 \ar@/_1pc/[l] &
      q_2' \ar[l]
    }.
  \end{align*}
  Note that the set $\{a,b, p_1, p_2, q_1,q_2, q_2', q_3\}$ contains
  all preimages of the postcritical points. 

\begin{figure}
  \centering
  \begin{overpic}
    [width=12cm, %grid, 
    tics=20]{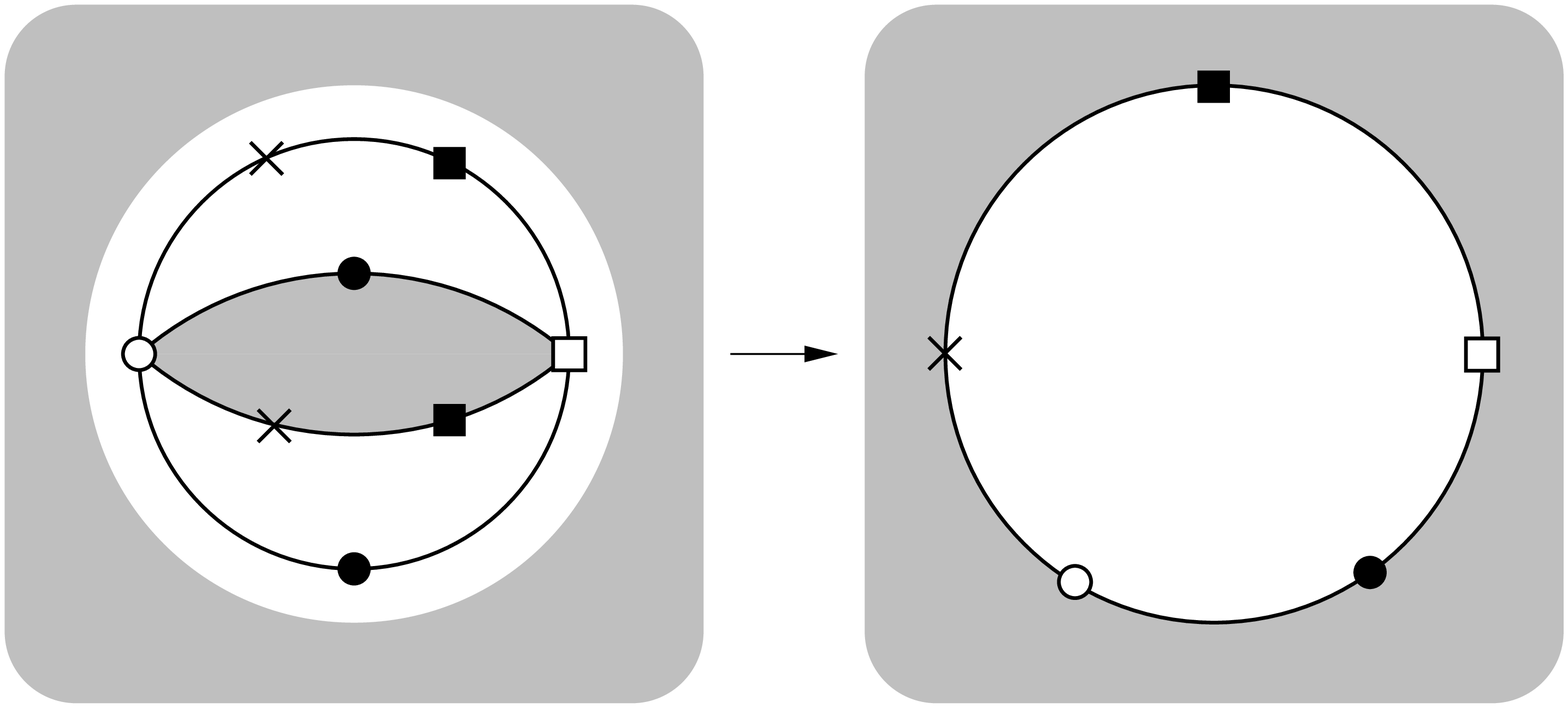}
    % left labels 
    \put(38,22){${\scriptstyle b}$} 
    \put(5.8,22){${\scriptstyle a}$} 
    \put(21,6){${\scriptstyle p_2}$}
    \put(14,14.5){${\scriptstyle q_2\vee q_2'}$}
    \put(25.5,15){${\scriptstyle q_3\vee q_1}$}
    \put(14.5,36.7){${\scriptstyle q_2'\vee q_2}$}  
    \put(25.5,36.5){${\scriptstyle q_1\vee q_3}$}
    \put(22,29.5){${\scriptstyle p_1}$} 
    %
    %right labels
    \put(68,10.5){${\scriptstyle p_1}$} 
    \put(85,10.5){${\scriptstyle p_2}$} 
    \put(62,22){${\scriptstyle q_3}$} 
    \put(91,22){${\scriptstyle q_1}$} 
    \put(77,36.5){${\scriptstyle q_2}$} 
    \put(49,24){${ f}$} 
  \end{overpic}
  \caption{Case 1.}
  \label{fig:4}
\end{figure}

\begin{figure}
  \centering
  \begin{overpic}
    [width=12cm, %grid, 
    tics=20]{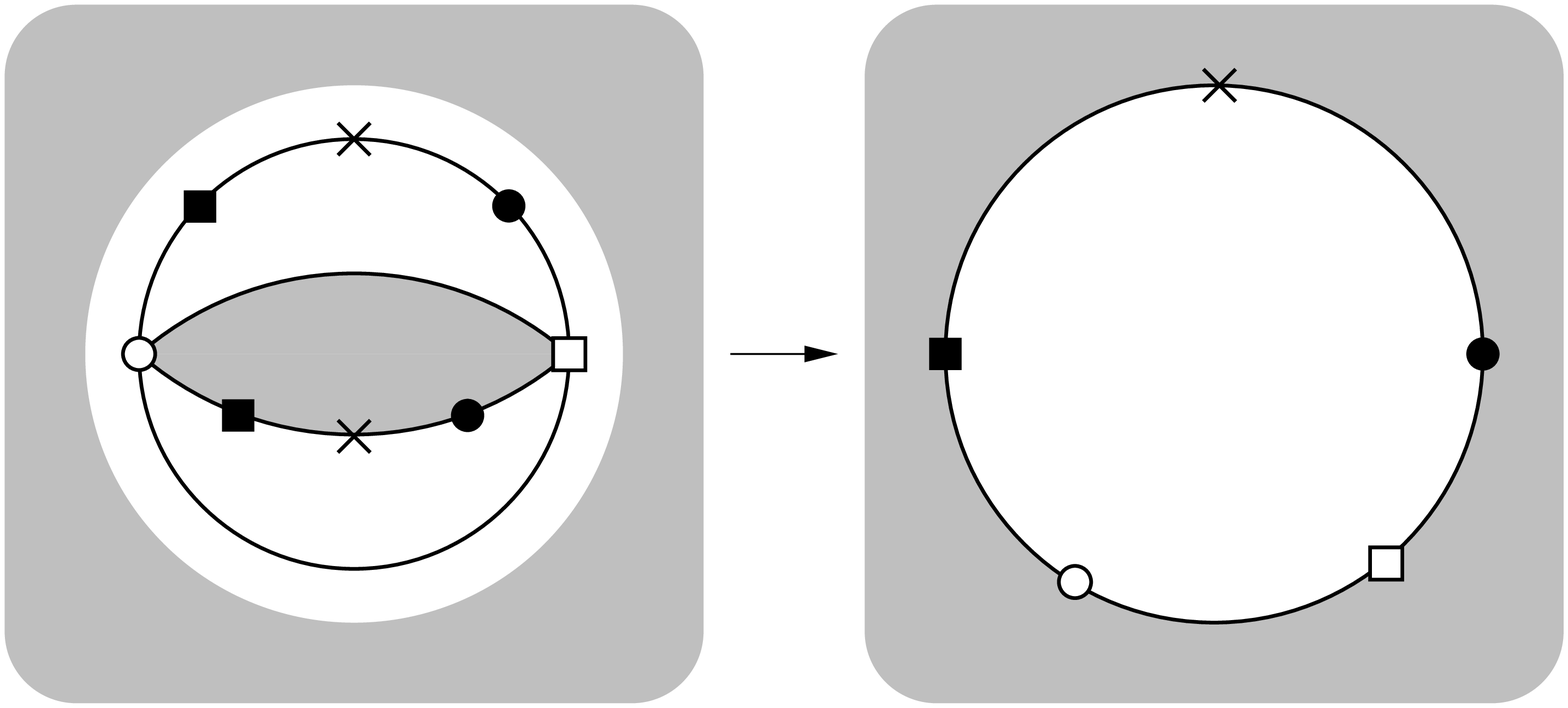}
    %
    %right labels
    \put(68,10.5){${\scriptstyle p_1}$} 
    \put(85,10.5){${\scriptstyle q_1}$} 
    \put(62,22){${\scriptstyle q_2}$} 
    \put(91,22){${\scriptstyle p_2}$} 
    \put(77,36.5){${\scriptstyle q_3}$} 
    \put(49,24){${ f}$} 
    %
    % left labels 
    \put(38,22){${\scriptstyle b}$} 
    \put(5.8,22){${\scriptstyle a}$} 
    \put(29,15.7){${\scriptstyle p_2}$}
    \put(29.7,29.7){${\scriptstyle p_1}$}
    \put(14,30){${\scriptstyle q_1}$}
    \put(14,15.7){${\scriptstyle q_3}$}
  \end{overpic}
  \caption{Case 3.}
  \label{fig:5}
\end{figure}

  \smallskip
  \begin{case1}[$p_1,p_2$ are adjacent on $\CC$]
    
    \mbox{}
    
    This means that one arc on $\CC$ between $p_1,p_2$ does
    not contain any other postcritical point. One such situation is
    pictured in Figure~\ref{fig:4}. Assume the two white $1$-tiles are
    connected at $b$. Traverse $\CC^1:= f^{-1}(\CC)$ in the order
    induced by this connection. Note that on each of the two paths (on
    $\CC^1$) between $p_1, p_2$ there is at least one other
    postcritical point. Thus there is no pseudo-isotopy rel.\
    $\post(f)$ that deforms $\CC$ to $\CC^1$. The same argument works
    in the case when the white $1$-tiles are connected at $a$. Indeed
    the same argument always works when $p_1,p_2$ are adjacent on
    $\CC$, i.e., Case~1 is impossible.  
  \end{case1}
  
  By exactly the same argument as above we can rule out that $q_1,
  q_3$ are adjacent on $\CC$. Indeed as before they have to be in
  distinct white $1$-tiles, since they are both preimages of $q_2$. If
  the white $1$-tiles are connected either at $a$ or at $b$, there
  will be other postcritical points between $q_1, q_3$ when we
  traverse $\CC^1$.

  \begin{case2}[$p_2,q_2$ are adjacent on $\CC$]
    
    \mbox{}

    We choose the orientation on $\CC$ such that $q_2$ succeeds $p_2$
    on $\CC$. Consider the white $1$-tile $X$ containing $p_2$ in its
    boundary $\partial X$. Then $p_2$ is succeeded on $\partial X$,
    hence on $\CC^1$, by a $1$-vertex which is mapped to $q_2$ by
    $f$. This $1$-vertex must be $q_1$ or $q_3$. Hence on $\CC^1$ the
    postcritical point $p_2$ is succeeded by either $q_1$ or
    $q_3$. Hence $\CC^1$ is not obtained as a pseudo-isotopic
    deformation of $\CC$ rel.\ $\post(f)$ (in which case $p_2$ would
    be succeeded by $q_2$ on $\CC^1$). Hence Case 2 does not happen.
  \end{case2}
  
    From the above it follows that the cyclical order of the
    postcritical points on $\CC$ is either $p_1, q_1, p_2, q_3, q_2,
    p_1$ or $p_1, q_3, p_2, q_1, q_2, p_1$. Here we are using that we
    can choose the orientation on $\CC$ arbitrarily. 

    \begin{case3}
      [The cyclical order of the postcritical points on $\CC$ is $p_1, q_1, p_2, q_3, q_2,
    p_1$]

    \mbox{}
    
    The situation is illustrated in Figure~\ref{fig:5}. Since on $\CC$
    the point $p_1$ is succeeded by $q_1$ and $p_2$ is succeeded by $q_3$, the
    same has to hold on $\CC^1$. Note however, that on $\CC^1$ between
    $p_1, q_1$ and between $p_2, q_3$ there is a preimage of
    $q_3$. One of these preimages (marked with an {\textsf{x}} in the
    figure) of $q_3$ has to be $q_2$. Thus the cyclical ordering of
    the postcritical points on $\CC^1$ does not agree with the one on
    $\CC$. 
    \end{case3}

    \begin{case4}
      [The cyclical order of the postcritical points on $\CC$ is $p_1, q_3, p_2, q_1, q_2,
    p_1$]

    \mbox{}
    
    The situation is exactly analog to Case 3. In the right of
    Figure~\ref{fig:5} we have to interchange $q_1$ and $q_3$. Since
    $p_1$ is succeeded by $q_3$ and $p_2$ is succeeded by $q_1$ on
    $\CC$, the same has to hold in $\CC^1$. Thus in the right of
    Figure~\ref{fig:5} we have to interchange $q_1$ and $q_3$ for the
    situation at hand. One of the points marked by {\textsf x} has to
    be $q_2$, which gives a contradiction as before. 
    \end{case4}
\end{proof}

\section{Open questions}
\label{sec:open-questions}

Let $f\colon \CDach \to \CDach$ be a rational map.
Is the existence of an equator for $f$ \emph{sufficient} for $f$ to
arise as a mating? When $f$ is postcritically finite, this is widely
expected, but there does not seem to be a proof in the literature. 

\smallskip
A (pseudo-) equator $\EC$ has to be \emph{orientation-preserving}
(pseudo-) isotopic to $f^{-1}(\EC)$. \emph{Orientation-reversing}
(pseudo-) equators seem to be as common as orien\-ta\-tion-preserving
ones. From such a pseudo-equator it is possible to construct a
semi-conjugacy from $z^{-d}\colon S^1\to S^1$ to the map $f$ (here $d=
\deg f$) (in the case when $f$ is postcritically finite and
$\J(f)=\CDach$). There should be some sort of ``orientation-reversing 
mating'' associated to such orientation-reversing (pseudo-) equators. 

\smallskip
The form of Moore's theorem presented here allows one to shrink each 
ray-equivalence class to a point by a pseudo-isotopy $H$. Is it possible
to put further smoothness assumptions on $H$. In particular to \emph{embed
$H$ in a holomorphic motion}?

% Literaturverzeichnis
\bibliographystyle{alpha}
\bibliography{main}

\end{document}